\numberwithin{equation}{section}
\newtheorem{theorem}{Theorem}[section]
\newtheorem{lemma}{Lemma}[section]
\numberwithin{equation}{section}
\newcommand{\HT}{\CJKfamily{hei}}
\def\lb{\label}
\def\be{\begin{equation}}
\def\ee{\end{equation}}
\def\bea{\begin{eqnarray}}
\def\eea{\end{eqnarray}}
\def\bes{\begin{eqnarray*}}
\def\ees{\end{eqnarray*}}
\def\y{\begin{eqnarray*}}
\def\ey{\end{eqnarray*}}
\begin{document}
\title{\HT {An improved Trudinger-Moser inequality involving $N$--Finsler--Laplacian and $L^p$ norm  }}
\author{\small {Yanjun Liu$^{a, b}$  }\\
\small $^{a}$School of Mathematical Sciences, Nankai University, Tianjin 300071, P. R. China\\
\small $^{b}$Albert-Ludwigs-Universit\"at Freiburg, Mathematisches Institut, Eckerstr. 1, \\
   \small                D-79104 Freiburg, Germany}

\date{}

\maketitle \footnote[0] {Email: liuyj@mail.nankai.edu.cn(Y. Liu).}\\
%\begin{flushleft}
\noindent{\small
{\bf Abstract:} Suppose $F: \mathbb{R}^{N} \rightarrow [0, +\infty)$ be a  convex function of class $C^{2}(\mathbb{R}^{N} \backslash \{0\})$ which is even and positively homogeneous of degree 1.  We denote $\gamma_1=\inf\limits_{u\in W^{1, N}_{0}(\Omega)\backslash \{0\}}\frac{\int_{\Omega}F^{N}(\nabla u)dx}{\| u\|_p^N}.$
and define the norm
$\|u\|_{N,F,\gamma, p}=\bigg(\int_{\Omega}F^{N}(\nabla u)dx-\gamma\| u\|_p^N\bigg)^{\frac{1}{N}}.$
Let $\Omega\subset \mathbb{R}^{N}(N\geq 2)$ be a smooth bounded domain.  Then for  $p> 1$ and $0\leq \gamma <\gamma_1$, we have
$$ \sup_{u\in W^{1, N}_{0}(\Omega), \|u\|_{N,F,\gamma, p}\leq 1}\int_{\Omega}e^{\lambda |u|^{\frac{N}{N-1}}}dx<+\infty,  $$
where $0<\lambda \leq \lambda_{N}=N^{\frac{N}{N-1}} \kappa_{N}^{\frac{1}{N-1}}$ and $\kappa_{N}$ is the volume of a unit Wulff ball. Moreover,  by using blow-up analysis and capacity technique, we prove that the
supremum  can be attained for any $0 \leq\gamma <\gamma_1$.\\
\noindent{\bf Keywords:}  N--Finsler--Laplacian; Trudinger-Moser inequality;  Extremal function; Blow-up analysis; Elliptic regularity theory\\
\noindent{\bf MSC2010:} 26D10, 35J70, 46E35}
%\end{flushleft}

%%%%%%%%%%%%%%%%%%%%%%%%%%%%%%%%%%%%%%%%%%%%%%%%%%%%%%%%%%%%%%%%%%%%%%%%%%%%%%%%%%%%%%%%%%%%%%%%%%%%%%%%%%%
\section{Introduction and main results}\label{section 1}
Suppose $\Omega \subset \mathbb{R}^{N}(N \geq 2)$ be a bounded smooth domain. When $1 < p < N$, the Sobolev embedding theorem implies that
 $W_{0}^{1,p}(\Omega)\hookrightarrow L^{q}(\Omega)$ is continuous for $1 \leq q \leq \frac{Np}{N-p}$. In particular, $W_{0}^{1,N}(\Omega)\hookrightarrow L^q(\Omega)$ for $1 \leq q < \infty$, but the embedding  $W_{0}^{1,N}(\Omega)\not \hookrightarrow L^\infty(\Omega)$. A counterexample is given by the function $u(x) = (- \ln| \ln |x||)_{+}$ as $\Omega$ is the unit ball. It was proposed independently by Yudovich \cite{Yudovich}, Pohozaev \cite{Pohozaev},  Peetre \cite{Peetre} and Trudinger \cite{N.S. Trudinger} that $W^{1, N}_{0}(\Omega)$ is embedded in the Orlicz space $L_{\varphi_{\alpha}}(\Omega)$ determined by the Young function $\varphi_{\alpha}(t)=e^{\alpha|t|^{\frac{N}{N-1}}}-1$ for some positive number $\alpha$, it was sharpened by Moser \cite{J. Moser} who found the best exponent and proved the following result:\\
\textbf{Theorem A} There exists a sharp constant $\alpha_{N}:=N^{\frac{N}{N-1}}\omega_{N}^{\frac{1}{N-1}}$ such that
\begin{equation}\label{1.1}
 \sup_{u\in W_0^{1, N}(\Omega), \|\nabla u\|_{N}\leq1}\int_{\Omega}e^{\alpha|u|^{\frac{N}{N-1}}}dx<+\infty,  \forall \alpha \leq  \alpha_{N},
\end{equation}
where $\omega_{N}$ is the volume of
unit ball in $\mathbb{R}^{N}$.
Moreover, the supremum in ($\ref{1.1}$) is $+\infty$ if $\alpha > \alpha_N$. Related inequalities for unbounded domains were proposed by D. M. Cao \cite{D.M. Cao} in dimension two and J. M. do $\acute{O}$ \cite{J. M. do}, Adachi-Tanaka \cite{AT} in high dimension,  however they just considered the subcritical Trudinger-Moser inequality. Ruf \cite{Ruf} (for the case $N=2$), Li and Ruf  \cite{LB}  (for the general case $N \geq 2$) obtained the Trudinger-Moser inequality in the critical case by replacing the Dirichlet norm with the standard Sobolev norm in $W^{1, N}(\mathbb{R}^{N})$. Subsequntly,   Masmoudi and Sani \cite{Masmoudi}  derived Trudinger-Moser inequalities with the exact growth condition in $\mathbb{R}^n$, These inequality  plays an important role in geometric analysis and partial differential equations, we refer  to \cite{CT03,de Figueiredo, Adimurthi,ddR, LL14, do, Masmoudi} and references therein. In \cite{LLZ},   Lam, Lu and Zhang  provide a precise relationship between  subcritical and
critical  Trudinger-Moser inequality. The similar result in Lorentz-Sobolev norms was also proved
by Lu and Tang \cite{LT}. Trudinger-Moser inequality for first order derivatives was extended to high order derivatives by D. Adams \cite{D. Adams} for
bounded domains when  dimension $N\geq2$. B. Ruf and F. Sani \cite{B. Ruf} studied the Adams type inequality with
higher derivatives of even orders for unbounded domains in $\mathbb{R}^{N}$. In \cite{LL}, Lam and Lu applied a rearrangement-free argument to prove sharp Adams' inequality in general case.

One important problem on Trudinger-Moser inequalities is whether or not extremal functions
exist. Existence of extremal functions for the Trudinger-Moser inequality was first obtained
by Carleson-Chang \cite{CC} when $\Omega$ is the unit ball, by M. Struwe \cite{S} when $\Omega$  is close to the ball in
the sense of measure, then by M. Flucher \cite{F} and K. Lin \cite{L} when $\Omega$  is a general bounded smooth
domain. Recently based on the work by Malchiodi and Martinazzi in \cite{Malchiodi-Martinazzi}, Mancini and Martinazzi \cite{Mancini-Martinazzi} reproved the Carleson and Chang's result by using a new method based on the Dirichlet energy, also allowing
for perturbations of the functional. In the entire Euclidean space, existence of extremal functions was proved by Ruf \cite{Ruf} (for the case $N=2$) and Li and Ruf  \cite{LB}  (for the general case $N \geq 2$). For extremal functions of singular version, Csat$\acute{o}$ and Roy \cite{CR} proved
that extremal functions exist in bounded domain of  2 dimension. Li and Yang \cite{LY} proved that extremal functions  exist in the entire Euclidean space.

 Moreover, there are some extensions of the Trudinger-Moser inequality. Let $\alpha_1(\Omega)$ be the first eigenvalue of the Laplacian,  Adimurthi and O. Druet \cite{AD} proved that
\begin{equation*}
 \sup_{u\in W_0^{1, 2}(\Omega),  \int_{\Omega}|\nabla u|^{2}dx\leq1}\int_{\Omega}e^{4 \pi u^2(1+\alpha \|u\|_2^2)}dx<+\infty
\end{equation*}
for  $0\leq \alpha <\alpha_1(\Omega)$, the  supremum is infinity for any $\alpha \geq\alpha_1(\Omega)$. This result was generalized by Yang \cite{Yang1,Yang2} to the cases
of high dimension and a compact Riemannian surface.    Lu-Yang \cite{LuYang} and J. Zhu \cite{J.Zhu} considered the case involving
the $L^p$ norm for any $p > 1$.   For existence of
extremal functions  of Adimurthi-Druet type
inequalities, they proved in \cite{Yang2, LuYang} that supremums $(N = 2)$  are attained for sufficiently small $\alpha\geq 0$, and that the supremum $(N\geq3)$\cite{Yang1} is attained
for all $\alpha, 0 \leq \alpha <\alpha_1(\Omega).$  Subsequently,  J.M. do $\acute{O}$ and M. de Souza   generalized the similar result 
in whole Euclidean space \cite{J.M.do1} and high dimension case \cite{J.M.do3}, and the existence of extremal functions was also obtained. A stronger version was established by Tintarev \cite{Tintarev}, namely,
\begin{equation}\label{AAA}
 \sup_{u\in W_0^{1, 2}(\Omega), \int_{\Omega}|\nabla u|^{2}dx-\alpha\| u\|_2^2 \leq1}\int_{\Omega}e^{4 \pi u^2}dx<+\infty, ~~~~0\leq \alpha <\alpha_1(\Omega),
\end{equation}
Yang \cite{Yang3} obtained extremal functions for \eqref{AAA}, which was also extended to singular version (see \cite{YZ}).  In \cite{Nguyen}, the author extends the result of Tintarev to the higher dimension as the following result:\\
\textbf{Theorem B.} Let $\Omega\subset \mathbb{R}^{N}(N\geq 2)$ be a smooth bounded domain and define
 $$\alpha(\Omega)=\inf_{u\in W^{1, N}_{0}(\Omega), u\not\equiv0}\frac{\int_{\Omega}|\nabla u|^Ndx}{\| u\|_N^N}.$$
 Then for any $0\leq \alpha <\alpha(\Omega),$
 \be  \sup_{u\in W^{1, N}_{0}(\Omega), \int_{\Omega}|\nabla u|^Ndx-\alpha\| u\|_N^N\leq 1}\int_{\Omega}e^{\alpha_N |u|^{\frac{N}{N-1}}}dx<+\infty,  \lb{1.3}\ee
where $\alpha_{N}:=N^{\frac{N}{N-1}}\omega_{N}^{\frac{1}{N-1}}$ and $\omega_{N}$ is the volume of 
unit ball in $\mathbb{R}^{N}$.

Another interesting research is that Trudinger-Moser inequality has been generalized to the case of anisotropic norm.  In this paper, denote that $F \in C^{ 2}(\mathbb{R}^{N} \backslash {0})$ is a positive, convex and homogeneous function, $F_{\xi_{i}} = \frac{\partial F}{\partial\xi_{i}}$ and its polar $F^{o}(x)$ represents a Finsler metric on $\mathbb{R}^{N}$. We will replace the isotropic Dirichlet norm $\|u\|_{W_0^{1, N}(\Omega)}=(\int_\Omega |\nabla u|^Ndx)^{\frac{1}{N}}$ by the anisotropic Dirichlet norm $(\int_\Omega F^N(\nabla u)dx)^{\frac{1}{N}}$ in $W_0^{1, N}(\Omega)$. In \cite{WX}, Wang and Xia proved the following result:\\
\textbf{Theorem C (Anisotropic Trudinger-Moser Inequality).} Let $\Omega\subset \mathbb{R}^{N}(N\geq 2)$ be a smooth bounded domain. Let $u\in W_0^{1, N}(\Omega)$ and $(\int_{\Omega}F^{N}(\nabla u)dx)\leq 1$. Then
   \be \sup_{u\in W_0^{1, N}(\Omega), \int_{\Omega}F^N(\nabla u)dx\leq1}\int_{\Omega}e^{\lambda_N u^{\frac{N}{N-1}}}dx  < +\infty, \lb{1.4}\ee
where $\lambda_{N}=N^{\frac{N}{N-1}} \kappa_{N}^{\frac{1}{N-1}}$ and $\kappa_{N}=\{x \in \mathbb{R}^{N} : F^{o}(x)\leq 1\}$. $\lambda_{N}$ is sharp in the sense that if $\lambda> \lambda_{N}$ then there exists a sequence $(u_{n})$ such that $\int_{\Omega}e^{\lambda u^{\frac{N}{N-1}}}dx $ diverges.

The above inequality is related with $N$-Finsler-Laplacian operator $Q_{N}$ which is defined by
       $$Q_{N}u:=\sum_{i=1}^{N}\frac{\partial}{\partial x_{i}}(F^{N-1}(\nabla u)F_{\xi_{i}}(\nabla u)),$$
when $N=2$ and $F(\xi) = |\xi|$, $Q_{2}$ is just the ordinary Laplacian. This operator is closely related to a smooth, convex hypersurface in $\mathbb{R}^{N}$, which is called the Wulff shape (or equilibrium crystal shape) of $F$. This operator $Q_{N}$ was studied in some literatures, see \cite{AFTL,BF,FK} and the references therein. In \cite{ZZ}, they obtained the existence of extremal functions for the sharp geometric inequality \eqref{1.4}. 

Our aim is to establish and find extremal functions for  Trudinger-Moser inequality involving $N$--Finsler--Laplacian and $L^p$ norm.  For  $p >1$, we denote
  $$\gamma_1=\inf_{u\in W^{1, N}_{0}(\Omega), u\not\equiv0}\frac{\int_{\Omega}F^{N}(\nabla u)dx}{\| u\|_p^N},$$
and $$\|u\|_{N,F,\gamma, p}=\bigg(\int_{\Omega}F^{N}(\nabla u)dx-\gamma\| u\|_p^N\bigg)^{\frac{1}{N}}.$$
\begin{theorem}\label{thm1.1}
Let $\Omega\subset \mathbb{R}^{N}(N\geq 2)$ be a smooth bounded domain.  Then for any $0\leq \gamma <\gamma_1,$
\be  \Lambda_{ \gamma}=\sup_{u\in W^{1, N}_{0}(\Omega), \|u\|_{N,F,\gamma, p}\leq 1}\int_{\Omega}e^{\lambda_N |u|^{\frac{N}{N-1}}}dx<+\infty, \lb{1.5}\ee
where $\lambda_{N}=N^{\frac{N}{N-1}} \kappa_{N}^{\frac{1}{N-1}}$ and $\kappa_{N}$ is the volume of a unit Wulff ball.
\end{theorem}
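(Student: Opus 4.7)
The plan is to adapt the Tintarev-Nguyen scheme used for Theorem B to the anisotropic framework of Wang-Xia (Theorem C), via a Brezis-Lieb decomposition, the elementary convexity inequality $|a+b|^{N/(N-1)} \leq (1+\epsilon)|a|^{N/(N-1)} + C_\epsilon|b|^{N/(N-1)}$, and Hölder's inequality. First I take any sequence $u_n \in W_0^{1,N}(\Omega)$ with $\|u_n\|_{N,F,\gamma,p} \leq 1$. Since $\gamma < \gamma_1$, the definition of $\gamma_1$ gives
\[
\|u_n\|_{N,F,\gamma,p}^N \geq \Bigl(1 - \tfrac{\gamma}{\gamma_1}\Bigr) \int_\Omega F^N(\nabla u_n) dx,
\]
so $\{u_n\}$ is bounded in $W_0^{1,N}(\Omega)$. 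Passing to a subsequence, $u_n \rightharpoonup u_0$ weakly, $u_n \to u_0$ strongly in $L^q(\Omega)$ for every $q \in [1,\infty)$, and a.e. Setting $v_n := u_n - u_0$ (so $v_n \rightharpoonup 0$), an anisotropic Brezis-Lieb decomposition for the convex $N$-homogeneous integrand $F^N$ yields
\[
\int_\Omega F^N(\nabla u_n) dx = \int_\Omega F^N(\nabla u_0) dx + \int_\Omega F^N(\nabla v_n) dx + o(1).
\]
Combined with $\|u_n\|_p^N \to \|u_0\|_p^N$ and the constraint,
\[
\int_\Omega F^N(\nabla v_n) dx \leq 1 - \Bigl(\int_\Omega F^N(\nabla u_0) dx - \gamma \|u_0\|_p^N\Bigr) + o(1).
\]

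When $u_0 \not\equiv 0$, the definition of $\gamma_1$ together with $\gamma < \gamma_1$ gives $\int_\Omega F^N(\nabla u_0) dx - \gamma\|u_0\|_p^N \geq (1-\gamma/\gamma_1)\int_\Omega F^N(\nabla u_0) dx > 0$, so $\int_\Omega F^N(\nabla v_n) dx \leq 1 - \delta$ for some $\delta > 0$ and all large $n$. Applying the pointwise convexity inequality with $a = v_n,\, b = u_0$, and then Hölder's inequality with conjugate exponents $q, q'$,
\[
\int_\Omega e^{\lambda_N |u_n|^{N/(N-1)}} dx \leq \Bigl(\int_\Omega e^{(1+\epsilon) q \lambda_N |v_n|^{N/(N-1)}} dx\Bigr)^{1/q} \Bigl(\int_\Omega e^{C_\epsilon q' \lambda_N |u_0|^{N/(N-1)}} dx\Bigr)^{1/q'}.
\]
Choosing $\epsilon > 0$ small and $q > 1$ close to $1$ so that $(1+\epsilon)^{N-1} q^{N-1}(1-\delta) < 1$, the first factor is controlled uniformly in $n$ by Theorem C applied to the rescaling $v_n / \|F(\nabla v_n)\|_N$, while the second factor is a single finite integral since $u_0 \in W_0^{1,N}(\Omega)$ is a fixed function (apply Theorem C to $u_0/\|F(\nabla u_0)\|_N$ at a suitably small exponent).

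The main difficulty is the residual case $u_0 \equiv 0$ with $\int_\Omega F^N(\nabla u_n) dx \to 1$, in which the Brezis-Lieb gain collapses and the sequence sits exactly at the critical Trudinger-Moser level, so the simple rescaling $u_n / (\int F^N(\nabla u_n))^{1/N}$ produces an exponent strictly larger than $\lambda_N$ and Theorem C no longer directly applies. This case must be handled via a concentration-compactness analysis for the measures $|F(\nabla u_n)|^N dx$: if they are tight without atoms, strong $L^q$ convergence of $u_n$ to $0$ and local Trudinger-Moser bounds keep the integral finite (with limit $|\Omega|$); if they concentrate at one point with mass exactly one, an anisotropic Carleson-Chang-type estimate controls the integral near the concentration point. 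Combining these two subcases with the main Brezis-Lieb argument concludes $\Lambda_\gamma < +\infty$.
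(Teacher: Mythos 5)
Your decomposition settles only the easy regime. When $u_0\not\equiv 0$ (or when $u_0\equiv 0$ but the Dirichlet masses do not concentrate), the Brezis--Lieb/convexity/H\"older argument is essentially the paper's Lions-type Lemma 2.1, and it works --- modulo the caveat that the anisotropic Brezis--Lieb expansion for $\int_\Omega F^N(\nabla u_n)\,dx$ requires a.e.\ convergence of the gradients, which does not follow from weak convergence of an arbitrary constrained sequence when $N>2$ or $F$ is not the Euclidean norm (there is no exact cross-term cancellation as in the Hilbertian case $N=2$, $F=|\cdot|$); you would need to pass to Euler--Lagrange solutions or otherwise justify gradient a.e.\ convergence.

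The genuine gap is the concentration case, which is the actual content of the theorem. If $u_n\rightharpoonup 0$ and $F^N(\nabla u_n)\,dx\rightharpoonup\delta_{x_0}$, the constraint $\|u_n\|_{N,F,\gamma,p}\le 1$ permits $\int_\Omega F^N(\nabla u_n)\,dx=1+\gamma\|u_n\|_p^N>1$. Writing $w_n=u_n/(1+\gamma\|u_n\|_p^N)^{1/N}$, you must bound $\int_\Omega e^{\lambda_N\tau_n|w_n|^{N/(N-1)}}\,dx$ uniformly with $\tau_n=(1+\gamma\|u_n\|_p^N)^{1/(N-1)}>1$; since the Trudinger--Moser supremum is infinite for any exponent strictly above $\lambda_N$, no Carleson--Chang-type estimate for the constraint $\int_\Omega F^N(\nabla w)\,dx\le 1$ applies off the shelf. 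What is needed is the quantitative interplay $\|u_n\|_p^N=O(c_n^{-N/(N-1)})$, where $c_n=\max u_n$, so that the excess $\lambda_N(\tau_n-1)c_n^{N/(N-1)}$ in the exponent stays bounded. The paper obtains exactly this through the whole of Sections 3--4: subcritical maximizers and their Euler--Lagrange equations, the bubble profile $\varphi$, the Green-function asymptotics $c_\epsilon^{1/(N-1)}u_\epsilon\to G$ (which give the needed decay of $\|u_\epsilon\|_p$), the exclusion of boundary blow-up (Lemma 4.7), and the capacity estimate of Lemma 4.8 yielding $\Lambda_\gamma\le|\Omega|+\kappa_N e^{\lambda_N A_{x_0}+\sum_{k=1}^{N-1}1/k}$. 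Your one-sentence appeal to ``an anisotropic Carleson--Chang-type estimate'' replaces all of this and does not close the argument.
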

\noindent{\bf Remark 1.2.} From Theorem \ref{thm1.1}, for $0\leq \gamma <\gamma_1,$ we can derive the following weak version:
 \be  \sup_{u\in W^{1, N}_{0}(\Omega), \int_{\Omega}F^N(\nabla u)dx\leq 1}\int_{\Omega}e^{\lambda_N (1+\gamma\| u\|_p^N)^{\frac{1}{N-1}}|u|^{\frac{N}{N-1}}}dx<+\infty, \lb{1.6}\ee
for the special case  $p=N$ in \eqref{1.6}, we refer reader to \cite{Zhou}.
Next, let's show that we have got better result. Indeed, if $0\leq \gamma <\gamma_1,$  $u\in W_0^{1, N}(\Omega)$ and $\int_{\Omega}F^N(\nabla u)dx\leq1$. Set $v=(1+\gamma\| u\|_p^N)^\frac{1}{N}u\in W_0^{1, N}(\Omega)$, note that  $F$ is a  positively homogeneous function of degree 1.   Thus
\begin{equation*}
\left.
\begin{aligned}[b]
 \|v\|^N_{N,F,\gamma, p}&=(1+ \gamma \| u\|_p^N)\int_{\Omega}F^N(\nabla u)dx-\gamma\| u\|_p^N-\gamma^2\| u\|_p^{2N}\\
  &\leq\int_{\Omega}F^N(\nabla u)dx\leq 1.
 \end{aligned}
\right.
\end{equation*}
 Applying \eqref{1.5} to the function $v$, we obtain \eqref{1.6}.  This implies that \eqref{1.5} is a stronger inequality. 
\begin{theorem}\label{thm1.2}
 Let $\Omega\subset \mathbb{R}^{N}(N\geq 2)$ be a smooth bounded domain.  Then
the supremum
   \be   \Lambda_{\gamma}= \sup_{u\in W^{1, N}_{0}(\Omega), \|u\|_{N,F, \gamma, p}\leq 1}\int_{\Omega}e^{\lambda_{N} |u|^{\frac{N}{N-1}}}dx   \lb{1.7}\ee
can be attained by $u_{0}\in W^{1, N}_{0}(\Omega) \bigcap C^{1}(\overline{\Omega})$ with $\|u_0\|_{N,F,\gamma, p}=1.$
\end{theorem}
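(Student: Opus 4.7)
The plan is to follow the now-standard blow-up scheme for Trudinger-Moser extremals, adapted to the Finsler-Laplacian setting and to the $L^p$ correction in $\|\cdot\|_{N,F,\gamma,p}$. First, for each small $\epsilon>0$ I would consider the subcritical variational problem
\begin{equation*}
 \Lambda_{\gamma,\epsilon}=\sup_{u\in W^{1,N}_0(\Omega),\,\|u\|_{N,F,\gamma,p}\le 1}\int_\Omega e^{(\lambda_N-\epsilon)|u|^{N/(N-1)}}dx.
\end{equation*}
Since $(\lambda_N-\epsilon)<\lambda_N$, the embedding $W^{1,N}_0\hookrightarrow L^q$ together with Theorem~\ref{thm1.1} gives that the functional is weakly continuous, so by the direct method a maximizer $u_\epsilon\ge0$ exists with $\|u_\epsilon\|_{N,F,\gamma,p}=1$. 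Computing the first variation yields a Euler-Lagrange equation of the form
\begin{equation*}
 Q_N u_\epsilon - \gamma\|u_\epsilon\|_p^{N-p}u_\epsilon^{p-1} = \frac{1}{\mu_\epsilon}\,u_\epsilon^{\frac{1}{N-1}}e^{(\lambda_N-\epsilon)u_\epsilon^{N/(N-1)}}
\end{equation*}
in $\Omega$, with a normalizing constant $\mu_\epsilon>0$, and elliptic regularity (applied to the Finsler-Laplacian as in \cite{AFTL}) gives $u_\epsilon\in C^1(\overline\Omega)$.

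Next I would analyze the dichotomy as $\epsilon\to 0$. If $c_\epsilon:=\max_\Omega u_\epsilon$ stays bounded, standard elliptic estimates give strong convergence $u_\epsilon\to u_0$ in $W^{1,N}_0(\Omega)$, and $u_0$ attains $\Lambda_\gamma$; Theorem~\ref{thm1.2} then follows. The real work is the blow-up case $c_\epsilon\to\infty$. Let $x_\epsilon$ be a maximum point; using the constraint $0\le\gamma<\gamma_1$ and the fact that $\|u_\epsilon\|_{N,F,\gamma,p}=1$ is equivalent (up to a positive constant depending on $\gamma,\gamma_1$) to the bound $\int_\Omega F^N(\nabla u_\epsilon)dx\le C$, I would show $u_\epsilon\rightharpoonup u_0$ weakly, that concentration of the measure $F^N(\nabla u_\epsilon)dx$ occurs at a single interior point $x_0$, and that $u_\epsilon\to 0$ a.e.\ in $\Omega\setminus\{x_0\}$. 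Rescaling by the Finsler-adapted anisotropic dilation $\varphi_\epsilon(y)=c_\epsilon^{-1/(N-1)}(u_\epsilon(x_\epsilon+r_\epsilon y)-c_\epsilon)$ with the appropriate $r_\epsilon\to 0$, I would show (as in \cite{WX,ZZ}) convergence to the anisotropic Moser bubble
\begin{equation*}
 \varphi(y)= -\tfrac{N-1}{\lambda_N}\log\!\bigl(1+c_N (F^o(y))^{N/(N-1)}\bigr),
\end{equation*}
which solves $Q_N\varphi = e^{\lambda_N\varphi^{N/(N-1)}}$ on $\mathbb{R}^N$ and carries the full bubble mass $1$.

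Away from the bubble, $c_\epsilon u_\epsilon \to G$ in $C^1_{\rm loc}(\overline\Omega\setminus\{x_0\})$, where $G$ is the anisotropic Green function for $Q_N-\gamma\|\cdot\|_p^{N-p}(\cdot)^{p-1}$ with pole at $x_0$ and Dirichlet boundary data. Following the capacity argument of Li \cite{L} and Carleson-Chang, combined with the Finsler-isoperimetric/symmetrization calculus, I would expand the Green function as
\begin{equation*}
 G(x)= -\tfrac{N-1}{\lambda_N}\log F^o(x-x_0) + A_{x_0} + o(1)
\end{equation*}
near $x_0$, compute the capacity of thin Wulff annuli and obtain the sharp upper bound
\begin{equation*}
 \Lambda_\gamma\le \kappa_N\,e^{\lambda_N A_{x_0}+\sum_{k=1}^{N-1}1/k}.
\end{equation*}
Finally, to exclude blow-up, I would construct an explicit Moser-type test sequence $\phi_n\in W^{1,N}_0(\Omega)$ supported in a small Wulff ball around a prescribed point $x_\ast$, of the form
\begin{equation*}
 \phi_n= c+\tfrac{-(N-1)/\lambda_N\,\log(1+c_N(F^o(x-x_\ast)/r_n)^{N/(N-1)})+B}{c^{1/(N-1)}}
\end{equation*}
outside and a smooth cutoff of $G$ inside, with constants $c,B,r_n$ tuned so that $\|\phi_n\|_{N,F,\gamma,p}=1$ while $\int_\Omega e^{\lambda_N|\phi_n|^{N/(N-1)}}dx$ strictly exceeds the upper bound above. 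This contradiction forces the bounded case, yielding the extremal $u_0$.

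The most delicate step, and the main obstacle, is the test-function construction: the $L^p$ correction $-\gamma\|u\|_p^N$ in the norm interacts non-trivially with the anisotropic rescaling (because $F^o$-balls replace Euclidean balls) and must be tracked to sufficient accuracy in $1/c$ to beat the capacity bound; all Carleson-Chang-type cancellations need to be reproduced in the Wulff geometry. Once this delicate Taylor expansion is done and the contradiction obtained, regularity of $u_0\in C^1(\overline\Omega)$ follows from the same elliptic theory used for $u_\epsilon$.
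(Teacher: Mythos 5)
Your proposal follows essentially the same route as the paper: subcritical maximizers $u_\epsilon$ satisfying an Euler--Lagrange equation, the dichotomy on $c_\epsilon=\max_{\overline\Omega}u_\epsilon$, rescaling to the anisotropic bubble, convergence of the rescaled sequence to a Green-type function $G$ away from the concentration point, a capacity upper bound, and a Moser-type test function that exceeds it. The one substantive gap is your unsupported assertion that the concentration point $x_0$ is \emph{interior}. This is exactly one of the places where the $L^p$ perturbation has to be used: the paper spends two lemmas excluding $x_0\in\partial\Omega$, first showing that boundary blow-up forces $c_\epsilon^{1/(N-1)}u_\epsilon\to 0$ in $C^1(\overline\Omega\setminus\{x_0\})$ (so that $\Lambda_\gamma\le\Lambda_0$, the unperturbed supremum), and then observing that for $\gamma>0$ one has strictly $\Lambda_\gamma>\Lambda_0$ by rescaling the extremal of the $\gamma=0$ problem --- a contradiction. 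Without such an argument, the interior Green-function expansion and the capacity estimate on Wulff annuli around $x_0$ are not justified.

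Beyond that, several details in the sketch need correcting, though none changes the strategy. The far-field convergence is of $c_\epsilon^{1/(N-1)}u_\epsilon$ to $G$, not $c_\epsilon u_\epsilon$; the singular part of $G$ is $-\frac{N}{\lambda_N}\log F^o(x-x_0)$ (equivalently $-(N\kappa_N)^{-1/(N-1)}\log F^o(x-x_0)$), not $-\frac{N-1}{\lambda_N}\log F^o(x-x_0)$ --- the coefficient $\frac{N-1}{\lambda_N}$ belongs to the bubble, not the Green function. The sharp upper bound is $|\Omega|+\kappa_N e^{\lambda_N A_{x_0}+\sum_{k=1}^{N-1}1/k}$; the $|\Omega|$ term is essential, since the test function beats the bound only through the extra positive contribution $C^{-N/(N-1)}\frac{\lambda_N^{N-1}}{(N-1)!}\|G\|_N^N$ sitting on top of $|\Omega|+\kappa_N e^{\lambda_N A_{x_0}+\sum_{k=1}^{N-1}1/k}$. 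Finally, the subcritical functional is not weakly continuous on the unit ball of $\|\cdot\|_{N,F,\gamma,p}$: existence of $u_\epsilon$ requires first ruling out a zero weak limit (via the anisotropic Trudinger--Moser inequality) and then invoking a Lions-type concentration-compactness lemma for the norm $\|\cdot\|_{N,F,\gamma,p}$ to pass to the limit in the exponential integral; this is what your appeal to Theorem 1.1 should be replaced by.
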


This paper is organized as follows: In Section \ref{section 2}, we give some
preliminaries, meanwhile, under anisotropic Dirichlet norm and $L^p$ norm,  we establish the Lions type concentration-compactness principle of
 Trudinger-Moser Inequalities. In Section \ref{section 3}, we  give the existence of subcritical maximizers. In Section \ref{section 4}, we analyze the convergence of maximizing sequence  and its blow-up behavior, an upper bound   is established by capacity estimates. In Section \ref{section 5}, we provide the proof of Theorems \ref{thm1.1} and \ref{thm1.2} by contradiction arguments and the construction of test function.
\section{Preliminaries }\label{section 2}
In this section, we will give some preliminaries for our use later. Let $F: \mathbb{R}^{N} \rightarrow [0, +\infty)$ be a  convex function of class $C^{2}(\mathbb{R}^{N} \backslash \{0\})$ which is even and positively homogeneous of degree 1, so that
\be    F(t \xi)=|t|F(\xi)~~~~~\operatorname{for~~any}~~t\in \mathbb{R},~~ \xi \in \mathbb{R}^{N}. \lb{2.1}\ee
We also  assume that $  F(\xi)>0$ for any $\xi \neq 0$  and  $Hess(F^2)$ is positive definite in $\mathbb{R}^{N} \backslash \{0\}$. Then by
Xie and Gong \cite{XG}, $Hess(F^ N)$ is also positive definite in $\mathbb{R}^{N} \backslash \{0\}$. A typical example is $F(\xi)=(\sum_{i}|\xi_i|^q)^{\frac{1}{q}}$ for $q\in [1, \infty)$. Let $F^{o}$ be the support function of $K:=\{x\in \mathbb{R}^{N}: F(x) \leq 1 \}$, which is defined by
      $$F^{o}(x):= \sup_{\xi \in K}\langle x, \xi \rangle, $$
so $F^{o}: \mathbb{R}^{N} \rightarrow [0, +\infty)$ is also a convex, homogeneous function of class  $C^{2}(\mathbb{R}^{N} \backslash \{0\})$. From \cite{AFTL}, $F^{o}$ is dual to $F$ in the sense that
       $$F^{o}(x)= \sup_{\xi \neq 0}\frac{\langle x, \xi \rangle}{F(\xi)},~~~~F(x)= \sup_{\xi \neq 0}\frac{\langle x, \xi \rangle}{F^o(\xi)}.$$

Consider the map $\phi: S^{N-1} \rightarrow \mathbb{R}^{N}$, $\phi(\xi)=F_\xi(\xi)$. Its image $\phi(S^{N-1})$
is smooth, convex hypersurface in $\mathbb{R}^{N}$, which is called the Wulff shape (or equilibrium crystal shape) of $F$. Then $\phi(S^{N-1})= \{x\in \mathbb{R}^{N}|F^o(x) = 1\}$(see \cite{WX1}, Proposition 2.1). Denote $\mathcal{W}_r(x_0)=\{x\in \mathbb{R}^{N}: F^o(x-x_0) \leq r\}$, we call $\mathcal{W}_r(0)$ is a Wulff ball of radius $r$
with center at 0. We will use the convex symmetrization, which is defined in \cite{AFTL}. The convex symmetrization generalizes the Schwarz symmetrization(see \cite{T}). Let us consider a measured function $u$ on $\Omega \subset \mathbb{R}^{N}$, one dimensional decreasing rearrangement of $u$ is
   \be u^{\sharp}(t)=\sup\{s\geq 0: |\{x \in \Omega:   u(x)\geq s \}|>t \}~~~\operatorname{for}~~t\in  \mathbb{R}. \lb{2.2}\ee
The convex symmetrization of $u$ with respect to $F$ is defined as
   \be u^{\star}(x)=u^{\sharp}(\kappa_N F^{o}(x)^{N}) ~~~\operatorname{for}~~x\in \Omega^{\star}.  \lb{2.3}\ee
Here $\kappa_N F^{o}(x)^{N}$ is just the Lebesgue measure of a homothetic Wulff ball with radius $F^{o}(x)$ and $\Omega^{\star}$ is the homothetic Wulff ball centered at the origin having the same measure as $\Omega$. In \cite{AFTL}, the authors proved a P$\acute{o}$lya-Szeg$\ddot{o}$ principle and a comparison result for solutions of the
Dirichlet problem for elliptic equations for the convex symmetrization, which generalizes the classical
results for Schwarz symmetrization due to Talenti \cite{T}.

Now,  we give the definition of  anisotropic perimeter of a set with respect to $F$ , a co-area
formula and an isoperimetric inequality.
Precisely,  for a  a subset $E\subset \Omega$ and a function of bounded variation $u \in BV(\Omega)$, anisotropic bounded variation of $u$ with respect to $F$ is 
 $$\int_\Omega|\nabla u|_F=\sup \left\lbrace  \int_\Omega u     
 \cdot\mathrm{div}\sigma dx: \sigma \in C_0^1(\Omega; \mathbb{R}^N),F^o(\sigma) \leq 1\right\rbrace . $$
Define the  anisotropic perimeter of $E$ with respect to $F$ as
    $$ P_F(E):=\int_\Omega |\nabla \chi_E|_F, \label{pr} $$
where $ \chi_E$ is the characteristic function of the set $E$. From the reference \cite{FM}, we have the co-area formula
       \be \int_\Omega|\nabla u|_F=\int_0^\infty P_F(|u|>t)dt \label{co-area }\ee
 and the isoperimetric inequality
       \be P_F(E)\geq N\kappa_N^{1/N}|E|^{1-\frac{1}{N}}  \label{isoperimetric}.\ee 

 We will establish the Lions type concentration-compactness principle \cite{Lions} for
Trudinger-Moser Inequalities under  anisotropic Dirichlet norm and $L^p$ norm, which is the extention of Theorem 1.1 in \cite{CCH} and Lemma 2.3 in \cite{ZZ}.
\begin{lemma} \label{lem2.1}
Suppose $0\leq \gamma <\gamma_1$. Let $\{u_n\}\subset W_0^{1, N}(\Omega)$ be a sequence such that $\|u\|_{N,F, \gamma, p}=1$, $u_n \rightharpoonup u \not \equiv 0$ weakly in $W_0^{1, N}(\Omega)$. Then for any
    $$0 < q < q_N(u):=(1-\|u\|_{N,F, \gamma, p}^N)^{-\frac{1}{N-1}}$$
we have
  \be \int_{\Omega}e^{\lambda_{N}q |u_n|^{\frac{N}{N-1}}}dx<+\infty  \lb{2.4}\ee
where $\lambda_{N}=N^{\frac{N}{N-1}} \kappa_{N}^{\frac{1}{N-1}}$ and $\kappa_{N}$ is the volume of a unit Wulff ball. Moreover, this conclusion fails if $p \geq p_N(u)$.
\end{lemma}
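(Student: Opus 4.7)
The plan is to reduce the bound $(\ref{2.4})$ to Wang--Xia's anisotropic Moser--Trudinger inequality (Theorem~C) applied to the difference $w_n := u_n - u$, whose $F^N$--energy should concentrate at the value $s := 1 - \|u\|_{N,F,\gamma,p}^N \in (0,1)$. The hypothesis $q < q_N(u)$ is precisely the strict inequality $q s^{1/(N-1)} < 1$, and this margin absorbs two small losses: an $\varepsilon$--loss from a Young-type split $|u_n|^{N/(N-1)} \leq (1+\varepsilon)|w_n|^{N/(N-1)} + C_\varepsilon |u|^{N/(N-1)}$, and a $\delta$--loss from the upper bound $\int F^N(\nabla w_n)\,dx \leq s+\delta$ for large $n$. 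The main technical obstacle is the anisotropic Br\'ezis--Lieb decomposition for the $F^N$--energy, since the classical Br\'ezis--Lieb lemma needs a.e.\ convergence of the sequence, whereas weak convergence $u_n \rightharpoonup u$ in $W_0^{1,N}$ only gives $u_n \to u$ a.e.\ via Rellich and not $\nabla u_n \to \nabla u$ a.e.; this is handled using the strict convexity of $F^N$ (positive-definiteness of $\operatorname{Hess}(F^N)$ on $\mathbb{R}^N\setminus\{0\}$, cf.\ \cite{XG}), in the spirit of \cite{CCH,ZZ}.

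\textbf{Positive direction.} The compact embedding $W_0^{1,N}(\Omega)\hookrightarrow L^p(\Omega)$ gives $\gamma\|u_n\|_p^N \to \gamma\|u\|_p^N$; together with $\|u_n\|_{N,F,\gamma,p}=1$ this yields $\int F^N(\nabla u_n)\,dx \to 1 + \gamma\|u\|_p^N$. Combining with the anisotropic Br\'ezis--Lieb splitting
$$\int_\Omega F^N(\nabla u_n)\,dx = \int_\Omega F^N(\nabla u)\,dx + \int_\Omega F^N(\nabla w_n)\,dx + o(1)$$
forces $\int F^N(\nabla w_n)\,dx \to s$, with $s\in(0,1)$ because $u\not\equiv 0$ and $0\leq\gamma<\gamma_1$ ensure $\|u\|_{N,F,\gamma,p}\in(0,1]$. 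Applying H\"older with conjugate exponents $r,r'>1$ to the Young split yields
$$\int_\Omega e^{\lambda_N q|u_n|^{N/(N-1)}}\,dx \leq \Big(\int_\Omega e^{\lambda_N q(1+\varepsilon) r|w_n|^{N/(N-1)}}\,dx\Big)^{1/r}\Big(\int_\Omega e^{\lambda_N q C_\varepsilon r'|u|^{N/(N-1)}}\,dx\Big)^{1/r'}.$$
The second factor is finite for every exponent $\alpha>0$: a truncation of $u$ at height $M$ combined with Theorem~C applied to the small-$F^N$-energy remainder yields $\int_\Omega e^{\alpha|u|^{N/(N-1)}}\,dx<\infty$ for the fixed $u$. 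For the first factor, set $v_n := w_n/(s+\delta)^{1/N}$ so $\int F^N(\nabla v_n)\,dx\leq 1$ for $n$ large, and apply Theorem~C; the integral is then uniformly bounded provided $q(1+\varepsilon) r (s+\delta)^{1/(N-1)}\leq 1$, which holds for $\varepsilon, \delta>0$ sufficiently small and $r$ sufficiently close to $1$ since $q s^{1/(N-1)}<1$ strictly.

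\textbf{Sharpness for $q\geq q_N(u)$.} Pick a Lebesgue point $x_0\in\Omega$ of $u$ with $u(x_0)\neq 0$ (exists because $u\not\equiv 0$), and let $\phi_n$ be the anisotropic Moser bubble supported in a small Wulff ball $\mathcal{W}_r(x_0)\subset\Omega$, normalized by $\int F^N(\nabla\phi_n)\,dx=1$ and with peak value $(N\kappa_N)^{-1/N}(\log n)^{(N-1)/N}$ on $\{F^o(x-x_0)\leq r/n\}$. Set $u_n := u + t_n\,\mathrm{sign}(u(x_0))\,\phi_n$ with $t_n\to s^{1/N}$ chosen so that $\|u_n\|_{N,F,\gamma,p}=1$; then $u_n\rightharpoonup u$ weakly because $\phi_n\rightharpoonup 0$. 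Expanding $|u_n|^{N/(N-1)}$ at the peak and using the identity $\lambda_N q_N(u) s^{1/(N-1)}=\lambda_N=N^{N/(N-1)}\kappa_N^{1/(N-1)}$, one computes that the exponent at the peak is at least $N\log n + c_0(\log n)^{1/N}$ for some $c_0>0$ depending on $|u(x_0)|$. Integrating over the peak region of measure $\kappa_N(r/n)^N$ gives $\int_\Omega e^{\lambda_N q_N(u)|u_n|^{N/(N-1)}}\,dx \geq \kappa_N r^N e^{c_0(\log n)^{1/N}}\to\infty$, establishing failure for every $q\geq q_N(u)$.
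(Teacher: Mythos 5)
Your positive direction hinges on the anisotropic Br\'ezis--Lieb splitting $\int_\Omega F^N(\nabla u_n)\,dx=\int_\Omega F^N(\nabla u)\,dx+\int_\Omega F^N(\nabla w_n)\,dx+o(1)$, or at least on the one-sided bound $\limsup_n\int_\Omega F^N(\nabla w_n)\,dx\le s$, and this step is a genuine gap. Weak convergence in $W_0^{1,N}$ gives $u_n\to u$ a.e.\ but not $\nabla u_n\to\nabla u$ a.e., and strict convexity of $F^N$ does not repair this: the strong-convexity inequality $F^N(\xi)\ge F^N(\eta)+N F^{N-1}(\eta)F_\xi(\eta)\cdot(\xi-\eta)+C_N F^N(\xi-\eta)$ holds only with a constant $C_N<1$ when $N>2$ (test $\xi=-\eta$), so after integrating and killing the linear term by weak convergence you only obtain $\limsup_n\int F^N(\nabla w_n)\,dx\le s/C_N$, which feeds a strictly smaller threshold than $q_N(u)$ into your H\"older step. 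Worse, the one-sided inequality with constant $1$ is actually false for $N\ge3$: taking $\nabla u$ locally constant equal to $a\ne0$ and $\nabla w_n$ a mean-zero laminate oscillating between $-2a$ (with weight $1-p$) and $\tfrac{2(1-p)}{p}a$ (with weight $p$), one checks that $\lim_n\int\big(F^N(\nabla u_n)-F^N(\nabla u)-F^N(\nabla w_n)\big)dx<0$ for $p$ close to $1$. This is precisely why the sharp Lions-type concentration--compactness principle in dimension $N\ge3$ is a nontrivial theorem proved by symmetrization and one-dimensional reduction rather than by an energy decomposition (\cite{CCH}, and \cite{ZZ} in the anisotropic setting). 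The paper sidesteps the issue entirely: it sets $v_n=u_n/(\int_\Omega F^N(\nabla u_n)dx)^{1/N}$, notes $v_n\rightharpoonup v=u/(1+\gamma\|u\|_p^N)^{1/N}$, invokes Lemma~2.3 of \cite{ZZ} as a black box, and only checks the arithmetic $q(1+\gamma\|u\|_p^N)^{1/(N-1)}<(1-\int_\Omega F^N(\nabla v)dx)^{-1/(N-1)}$. If you replace your Br\'ezis--Lieb step by that citation, your argument collapses to the paper's; as written it does not reach the sharp exponent for $N\ge3$ (for $N=2$ the Hilbert-space identity makes the splitting exact and your route is fine, as are the Young split and the integrability of $e^{\alpha|u|^{N/(N-1)}}$ for every $\alpha$).

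Your sharpness construction goes beyond the paper, whose proof only treats the positive direction, and the leading-order count $N\log n+c_0(\log n)^{1/N}$ at $q=q_N(u)$ is correct. But note two loose ends: arranging $\|u_n\|_{N,F,\gamma,p}=1$ with $t_n\to s^{1/N}$ again uses an energy splitting for $u+t\phi_n$ whose cross term is only $O(\|F(\nabla u)\|_{L^N(\mathcal{W}_r(x_0))})$, small in $r$ but not $o(1)$ in $n$; and the value of $u_n$ on the plateau involves $u$ near $x_0$ rather than $u(x_0)$, so the Lebesgue-point property must be used quantitatively. Both are fixable but need to be written out.
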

\begin{proof}[\textbf{Proof.}]  
Since $\|u\|_{N,F, \gamma, p}=1$, we have
   $$\lim_{n\rightarrow \infty}\int_{\Omega}F^{N}(\nabla u_n)=\lim_{n\rightarrow \infty}(1+\gamma\| u_n\|_p^N)=1+\gamma\| u\|_p^N.$$
Let 
$$v_n=\frac{u_n}{(\int_{\Omega}F^{N}(\nabla u_n)dx)^{\frac{1}{N}}},$$
 we have $\int_{\Omega}F^{N}(\nabla v_n)dx=1$ and $v_n\rightharpoonup v:=u/(1+\gamma\| u\|_p^N)^{\frac{1}{N}}$ weakly in $W_0^{1, N}(\Omega)$. By Lemma 2.3 of \cite{ZZ}, it holds
  \be \limsup_{n\rightarrow \infty}\int_{\Omega}e^{\lambda_{N}r |v_n|^{\frac{N}{N-1}}}dx<+\infty  \lb{2.5}\ee
for any $0<r<(1-\int_{\Omega}F^{N}(\nabla v_n)dx)^{-\frac{1}{N-1}}$. Since $q < q_N(u)=(1-\|u\|_{N,F, \gamma, p}^N)^{-\frac{1}{N-1}}$, we have
\begin{equation}
\left.
\begin{aligned}[b]
 \lim_{n\rightarrow \infty}q \int_{\Omega}F^{N}(\nabla u_n)dx)^{\frac{1}{N-1}}&=q(1+\gamma\| u\|_p^N)^{\frac{1}{N-1}}\\
     &<\bigg(\frac{1+\gamma\| u\|_p^N}{1-(\int_{\Omega}F^{N}(\nabla u)dx+\gamma\| u\|_p^N)}\bigg)^{\frac{1}{N-1}}\\
     &=(1-\int_{\Omega}F^{N}(\nabla v_n)dx)^{-\frac{1}{N-1}}
\end{aligned}
\right.  \lb{2.6}
\end{equation}
Take $r<(1-\int_{\Omega}F^{N}(\nabla v_n)dx)^{-\frac{1}{N-1}}$ such that $q \int_{\Omega}F^{N}(\nabla u_n)dx)^{\frac{1}{N-1}}<r$ for $n$ large enough. Thus
 \be \int_{\Omega}e^{\lambda_{N}q |u_n|^{\frac{N}{N-1}}}dx=\int_{\Omega}e^{\lambda_{N}q(\int_{\Omega}F^{N}(\nabla u_n)dx)^{\frac{1}{N-1}} |v_n|^{\frac{N}{N-1}}}dx<\int_{\Omega}e^{\lambda_{N}r|v_n|^{\frac{N}{N-1}}}dx  \lb{2.7}\ee
for $n$ large enough. The result is followed from \eqref{2.5}. \end{proof}

Now, as the similar process of Theorem 2.2 in \cite{Struwe}, we give the following estimate  involving N-Finsler-Laplacian and $L^p$ norm.
\begin{lemma} \label{lem2.2}
Assume that $p>1$ and $0\leq \gamma < \gamma_1$. Let $f \in L^1(\Omega)$ and $u \in C^1(\bar{\Omega}) \bigcap W_0^{1, N}(\Omega)$ satisfies
\begin{equation}\label{2.8}
-Q_Nu=f+\gamma \|u\|_p^{N-p} |u|^{p-2}u~~~   in ~~~ \Omega,
\end{equation}
where $Q_Nu=\mathrm{div}(F^{N-1}(\nabla u)F_{\xi_{i}}(\nabla u))$.
Then  for any $1<q<N$, $u\in W_0^{1, q}(\Omega)$  and
      $$\|u\|_{W_0^{1, q}(\Omega)}\leq C(q, N, \gamma, \gamma_1)\|f\|_{L^1(\Omega)}.$$
\end{lemma}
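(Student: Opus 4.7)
The plan is to adapt the classical Boccardo--Gallouet argument for $L^{1}$-data quasilinear problems to the anisotropic Finsler setting, while absorbing the subcritical lower-order term using the gap $\gamma<\gamma_{1}$.

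First, I would record an energy identity. Testing \eqref{2.8} with $u$ itself and using the Euler homogeneity relation $F^{N-1}(\xi)F_{\xi}(\xi)\cdot\xi = F^{N}(\xi)$ (which follows from $F$ being positively $1$-homogeneous), we obtain
\[
\int_{\Omega}F^{N}(\nabla u)\,dx - \gamma\|u\|_{p}^{N} = \int_{\Omega}f\,u\,dx.
\]
Combined with the anisotropic Poincar\'e-type estimate $\int_{\Omega}F^{N}(\nabla u)\,dx \geq \gamma_{1}\|u\|_{p}^{N}$ built into the definition of $\gamma_{1}$, this yields $(1-\gamma/\gamma_{1})\int_{\Omega}F^{N}(\nabla u)\,dx \leq \int_{\Omega}fu\,dx$, which serves as the coercivity backbone for all later estimates.

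Next, for each $k>0$ I would test the equation with the truncation $T_{k}u=\mathrm{sgn}(u)\min\{|u|,k\}$. The same Euler identity converts the left-hand side into $\int_{\{|u|\leq k\}}F^{N}(\nabla u)\,dx$, while the right-hand side is bounded by $k\|f\|_{L^{1}}+\gamma\|u\|_{p}^{N-p}\int_{\Omega}|u|^{p-2}u\,T_{k}u\,dx$, the second summand being reabsorbed via the preliminary $L^{p}$ control. From here, the anisotropic co-area formula and the isoperimetric inequality \eqref{isoperimetric} yield a Boccardo--Gallouet-type decay estimate for the distribution function $\mu(t)=|\{|u|>t\}|$.

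Finally, splitting $\int_{\Omega}F^{q}(\nabla u)\,dx$ over the sub-level and super-level sets $\{|u|\leq k\}$ and $\{|u|>k\}$, and applying H\"older's inequality with conjugate exponents chosen according to $q/N\in(1/N,1)$ on the super-level piece, I would optimize in $k$ to conclude $u\in W_{0}^{1,q}(\Omega)$ together with the quantitative bound $\|u\|_{W_{0}^{1,q}}\leq C(q,N,\gamma,\gamma_{1})\|f\|_{L^{1}}$ for every $1<q<N$.

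The main obstacle is the nonlocal nonlinearity in the lower-order term: the factor $\|u\|_{p}^{N-p}$ depends on $u$ itself and is not a priori controlled by $\|f\|_{L^{1}}$. The strict inequality $\gamma<\gamma_{1}$ is exactly what resolves this, since it produces the uniform coercive constant $(1-\gamma/\gamma_{1})>0$ after testing with $u$ or $T_{k}u$, allowing $\gamma\|u\|_{p}^{N}$ to be reabsorbed on the left-hand side and decoupling the estimate from the $u$-dependence of the lower-order coefficient.
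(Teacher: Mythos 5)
Your overall architecture --- truncated test functions, a level-set decay estimate, and an interpolation step passing from the truncated $F^N$-energy bounds to $W_0^{1,q}$ --- coincides with the paper's, and your middle step (anisotropic co-area formula plus the isoperimetric inequality) is a legitimate substitute for the paper's route via convex symmetrization, the P\'olya--Szeg\"{o} principle and an explicit logarithmic capacitor: both deliver the crucial exponential decay $|\{u\geq t\}|\leq|\Omega|e^{-NC_1t}$ with $C_1$ of order $\|f\|_{L^1}^{-1/(N-1)}$. But two of your steps do not close as written. First, the absorption of the nonlocal term. Your ``preliminary $L^p$ control'' comes from testing with $u$ itself and gives $(1-\gamma/\gamma_1)\int_\Omega F^N(\nabla u)\,dx\leq\int_\Omega fu\,dx$; the right-hand side is bounded only by $\|f\|_{L^1}\|u\|_{L^\infty}$, not by $\|f\|_{L^1}$, so any estimate that reabsorbs $\gamma\|u\|_p^{N-p}\int_\Omega|u|^{p-2}u\,T_ku\,dx$ through this identity inherits an uncontrolled dependence on $u$ and cannot yield the claimed bound $C(q,N,\gamma,\gamma_1)\|f\|_{L^1}$. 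The paper instead dominates the lower-order term by $\gamma\|u^t\|_p^{N}\leq\frac{\gamma}{\gamma_1}\int_\Omega F^N(\nabla u^t)\,dx$, using the definition of $\gamma_1$ applied to the truncation itself, and absorbs it into the left-hand side of the truncated identity; that is where $\gamma<\gamma_1$ actually enters.

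Second, the final interpolation. A single split at one level $k$ with H\"older on the super-level set would require control of $\int_{\{|u|>k\}}F^N(\nabla u)\,dx$, whereas the only quantitative information available is the truncated bound $\int_{\{|u|\leq k\}}F^N(\nabla u)\,dx\leq \frac{\gamma_1}{\gamma_1-\gamma}k\|f\|_{L^1}$; the full Dirichlet energy is finite but not controlled by $\|f\|_{L^1}$, so ``optimizing in $k$'' cannot produce the stated estimate. You need either the genuine Boccardo--Gallou\"et dyadic summation over the slices $\{j\leq|u|<j+1\}$, applying H\"older on each slice with the truncated bound at level $j+1$ and the exponential decay of $|\{|u|\geq j\}|$, or the paper's device: the weighted estimate $\int_\Omega F^N(\nabla u)/(1+u^2)\,dx\leq\frac{5\gamma_1}{\gamma_1-\gamma}\|f\|_{L^1}$, obtained by exactly such a dyadic decomposition, followed by Young's inequality
\begin{equation*}
\int_\Omega F^q(\nabla u)\,dx\leq\int_\Omega \frac{F^N(\nabla u)}{1+u^2}\,dx+\int_\Omega(1+u^2)^{\frac{q}{N-q}}\,dx,
\end{equation*}
the last integral being finite and quantitatively bounded thanks to the exponential integrability of $u$.
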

\begin{proof}[\textbf{Proof}]
 Fix $t > 0$. Testing \eqref{2.8} by  $u^t:=\min\{u, t\} \in W_0^{1, N}(\Omega)$
and integrating by parts, we have
\begin{equation}
\left.
\begin{aligned}[b]
\int_{\Omega}F^N(\nabla u^t)dx\leq& \int_{\Omega}f u^tdx+\gamma \int_{\Omega} \|u^t\|_p^{N-p}|u^t|^pdx\\
 \leq & t\|f\|_{L^1(\Omega)}+\frac{\gamma}{\gamma_1}\int_{\Omega}F^N(\nabla u^t)dx.
\end{aligned}
\right.  \lb{2.9}
\end{equation}
Thus
\begin{equation} \label{2.10}
     \int_{\Omega}F^N(\nabla u^t)dx\leq \frac{\gamma_1}{\gamma_1-\gamma}t\|f\|_{L^1(\Omega)}.
\end{equation}
Denote $\mathcal{W}_r(0)=\{x\in \mathbb{R}^{N}: F^o(x) \leq r\}$ be a Wulff ball of the same measure as $\Omega$.  Let $v^{\star}$ be the convex symmetrization of $u^t$ with respect to $F$ and $|\mathcal{W}_\rho(0)|=|x\in\mathcal{W}_r(0): v^{\star} \geq t\}|$. In \cite{AFTL}, the authors proved the P$\acute{o}$lya-Szeg$\ddot{o}$ principle
      $$\int\limits_{\mathcal{W}_r(0)}F^N(\nabla v^{\star})dx\leq \int_{\Omega}F^N(\nabla u^t)dx.  $$
Thus
\begin{equation}
\left.
\begin{aligned}[b]
   \inf\limits_{\phi \in W_0^{1, N}(\mathcal{W}_r(0)), \phi|\mathcal{W}_\rho=t}\int\limits_{\mathcal{W}_r(0)}F^N(\nabla \phi)dx  \leq & \int\limits_{\mathcal{W}_r(0)}F^N(\nabla v^{\star})dx\\
 \leq & \frac{\gamma_1}{\gamma_1-\gamma}t\|f\|_{L^1(\Omega)}.
\end{aligned}
\right.  \lb{2.11}
\end{equation}
On the other hand,  the above infimum can be  achieved by
 \begin{equation}
  \phi_0(x)= \left\{  \begin{array}{l}
          t\log\frac{r}{F^o(x)}/\log\frac{r}{\rho}~~~   in~~~~   \mathcal{W}_r(0)\backslash \mathcal{W}_\rho(0), \\
         t   ~~~~~~~~~~~~~~~~~~~~~~~   in~~~~    \mathcal{W}_\rho(0).
         \end{array}
   \right.   \lb{2.12}
 \end{equation}
Since $F(\nabla F^o(x))=1$, through the direct computation, we have
\begin{equation}
\left.
\begin{aligned}[b]
\int\limits_{\mathcal{W}_r(0)}F^N(\nabla \phi_0)dx=&\int_\rho^r F^N( \frac{t}{\log\frac{r}{\rho}} \frac{\nabla F^o(x)}{-s})\int_{\partial \omega_s}\frac{1}{|\nabla F^o(x)|}d\sigma ds\\
=&\int_\rho^r  \frac{t^N}{(\log\frac{r}{\rho})^N} \frac{1}{s^N}\int_{\partial \omega_s}\frac{1}{|\nabla F^o(x)|}d\sigma ds\\
=&\int_\rho^r  \frac{t^N}{(\log\frac{r}{\rho})^N} \frac{1}{s^N}N\kappa_{N} s^{N-1} ds \\
=& \frac{N\kappa_{N}t^N}{(\log\frac{r}{\rho})^{N-1}}.
\end{aligned}
\right.  \lb{2.13}
\end{equation}
Hence $\frac{N\kappa_{N}t^N}{(\log\frac{r}{\rho})^{N-1}} \leq \frac{\gamma_1}{\gamma_1-\gamma}t\|f\|_{L^1(\Omega)}$, it holds
\begin{equation}
\left.
\begin{aligned}[b]
|\{x\in \Omega : u(x)\geq t\}|=|\mathcal{W}_\rho(0)|=&\kappa_{N} \rho^N \\
\leq& \kappa_{N} r^N\exp(-N(N\kappa_{N})^{\frac{1}{N-1}}t(\frac{\gamma_1-\gamma}{\gamma_1}\|f\|_{L^1}^{-1})^{\frac{1}{N-1}})\\
 \leq& |\Omega|\exp(-N(N\kappa_{N})^{\frac{1}{N-1}}t(\frac{\gamma_1-\gamma}{\gamma_1}\|f\|_{L^1}^{-1})^{\frac{1}{N-1}})\\
 =&: |\Omega|e^{-NC_1t}.
\end{aligned}
\right.  \lb{2.14}
\end{equation}
For every $0<b<NC_1$,
\begin{equation}
\left.
\begin{aligned}[b]
\int_\Omega e^{bu}dx \leq& \int_{\{x: u(x)\leq 1\}} e^{bu}dx+\int_{\{x: u(x)\geq 1\}} e^{bu}dx\\
 \leq& e^b|\Omega|+\sum_{k=1}^{\infty}e^{b(k+1)}|\{x\in \Omega: k\leq u\leq k+1\}|\\
 \leq& e^b|\Omega|+e^b|\Omega|\sum_{k=1}^{\infty}e^{(b-NC_1)k}\leq C(b)|\Omega|.
\end{aligned}
\right.  \lb{2.15}
\end{equation}
From \eqref{2.10}, we have
\begin{equation} \label{2.16}
     \int_{\{x:  u\leq t\}}F^N(\nabla u)dx\leq \frac{\gamma_1}{\gamma_1-\gamma}t\|f\|_{L^1(\Omega)}.
\end{equation}
Hence,
\begin{equation}
\left.
\begin{aligned}[b]
\int_\Omega \frac{F^N(\nabla u)}{1+u^2}dx =&  \int_{\{x: u(x)\leq 1\}}\frac{F^N(\nabla u)}{1+u^2}dx+\int_{\{x: u(x)\geq 1\}} \frac{F^N(\nabla u)}{1+u^2}dx\\
     \leq& \int_{\{x: u(x)\leq 1\}}F^N(\nabla u)dx+\sum_{m\geq 0}\int_{\{x: 2^m\leq u(x)\leq 2^{m+1}\}} \frac{F^N(\nabla u)}{u^2}dx\\
     \leq& \int_{\{x: u(x)\leq 1\}}F^N(\nabla u)dx+\sum_{m\geq 0}\frac{1}{2^m}\int_{\{x: 2^m\leq u(x)\leq 2^{m+1}\}} \frac{F^N(\nabla u)}{u}dx\\
 \leq&\frac{\gamma_1}{\gamma_1-\gamma}\|f\|_{L^1(\Omega)}+\sum_{m\geq 0}\frac{1}{2^m}\frac{2\gamma_1}{\gamma_1-\gamma}\|f\|_{L^1(\Omega)}\\
 =&\frac{5\gamma_1}{\gamma_1-\gamma}\|f\|_{L^1(\Omega)},
\end{aligned}
\right.  \lb{2.17}
\end{equation}
where we have used the estimate \eqref{2.16} for $t = 1$ and $t=2^{m+1}$ in last inequality.
For $1<q<N$, $\frac{q}{N}+\frac{N-q}{N}=1$, by Young's inequality, we have
\begin{equation}
\left.
\begin{aligned}[b]
\int_\Omega F^q(\nabla u)dx =& \int_\Omega \frac{F^q(\nabla u)}{(1+u^{2})^{q/N}}(1+u^{2})^{q/N}dx\\
                  \leq & \int_\Omega \frac{F^N(\nabla u)}{1+u^2}dx+\int_\Omega (1+u^{2})^{q/(N-q)}dx\\
\end{aligned}
\right.  \lb{2.18}
\end{equation}
The desired bound now follows from \eqref{2.15} and \eqref{2.17}. 
\end{proof}
\section{Maximizers of the subcritical case}\label{section 3}
In this section, we will show the existence of the maximizers for Trudinger-Moser in the subcritical case. 
\begin{lemma}\label{lem3.1}
 Let $\Omega\subset \mathbb{R}^{N}(N\geq 2)$ be a smooth bounded domain.  Then for any $\epsilon \in (0, \lambda_{N})$, the supremum
     \be   \Lambda_{\gamma, \epsilon} =\sup_{u\in W^{1, N}_{0}(\Omega),\|u\|_{N,F, \gamma, p}\leq 1}\int_{\Omega}e^{(\lambda_{N}-\epsilon)
     |u|^{\frac{N}{N-1}}}dx   \lb{3.0}\ee
can be attained by $u_{\epsilon}\in W^{1, N}_{0}(\Omega) \bigcap C^{1}(\overline{\Omega})$ with $\|u _{\epsilon, n}\|_{N,F, \gamma, p}= 1$ . In the distributional sense, $u_\epsilon$ satisfies the following equation
 \begin{equation}
   \left\{  \begin{array}{l}
          -Q_{N}u_\epsilon-\gamma\| u_\epsilon\|_p^{N-p}u_\epsilon^{p-1}=\frac{1}{\lambda_\epsilon}u_\epsilon^{\frac{1}{N-1}}e^{(\lambda_{N}-\epsilon)u_\epsilon^{\frac{N}{N-1}}}      ~~~~~~in~~~\Omega,\\
          u_{\epsilon}=0  ~~~~~on~~~\partial\Omega, \\
         \lambda_\epsilon=\int_\Omega u_\epsilon^{\frac{N}{N-1}}e^{(\lambda_{N}-\epsilon)u_\epsilon^{\frac{N}{N-1}}}dx.
         \end{array}
   \right.   \lb{3.1}
 \end{equation}
 Moreover,
     $$\liminf_{\epsilon\rightarrow 0}\lambda _{\epsilon}>0$$
 and
 \begin{equation}\label{3.2}    
      \liminf_{\epsilon\rightarrow 0}\Lambda_{\gamma, \epsilon}=\Lambda_{\gamma}.
    \end{equation}
\end{lemma}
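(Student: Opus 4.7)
The plan is the direct method of the calculus of variations in the subcritical regime, using the concentration-compactness Lemma~\ref{lem2.1} as the key analytic tool. Take a maximizing sequence $\{u_k\}\subset W^{1,N}_0(\Omega)$ for $\Lambda_{\gamma,\epsilon}$; by $1$-homogeneity of $\|\cdot\|_{N,F,\gamma,p}$ and monotonicity of the exponential integrand in $|u|$, I may assume $\|u_k\|_{N,F,\gamma,p}=1$. The bound $\|u\|_p^N\leq \gamma_1^{-1}\int F^N(\nabla u)\,dx$ built into the definition of $\gamma_1$ gives $\int_\Omega F^N(\nabla u_k)\,dx\leq \gamma_1/(\gamma_1-\gamma)$, so $\{u_k\}$ is bounded in $W^{1,N}_0(\Omega)$; extract a subsequence with $u_k\rightharpoonup u_\epsilon$ weakly, strongly in $L^q$ for every finite $q$, and a.e.

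The crux is to establish $\int_\Omega e^{(\lambda_N-\epsilon)|u_k|^{N/(N-1)}}\,dx\to \int_\Omega e^{(\lambda_N-\epsilon)|u_\epsilon|^{N/(N-1)}}\,dx$, which via Vitali's theorem reduces to a uniform $L^s$ bound on $e^{(\lambda_N-\epsilon)|u_k|^{N/(N-1)}}$ for some $s>1$. If $u_\epsilon\not\equiv 0$, then $q_N(u_\epsilon)>1$; I would apply Lemma~\ref{lem2.1} with $q\in(1,q_N(u_\epsilon))$ to obtain $\int_\Omega e^{\lambda_N q|u_k|^{N/(N-1)}}\,dx\leq C$, which yields the required bound with $s=\lambda_N q/(\lambda_N-\epsilon)>1$. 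If $u_\epsilon\equiv 0$ then $\|u_k\|_p\to 0$ gives $\int F^N(\nabla u_k)\to 1$, and Theorem~C applied to $\tau u_k$ with $\tau<1$ but $\tau^{N/(N-1)}\lambda_N>\lambda_N-\epsilon$ supplies the same bound. Since $\Lambda_{\gamma,\epsilon}>|\Omega|$ (any nontrivial admissible $u$ strictly exceeds this value), $u_\epsilon\not\equiv 0$; a rescaling argument then forces $\|u_\epsilon\|_{N,F,\gamma,p}=1$, for otherwise $u_\epsilon/\|u_\epsilon\|_{N,F,\gamma,p}$ would be admissible and produce a strictly larger value.

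The Euler-Lagrange equation~\eqref{3.1} follows from Lagrange multipliers applied with the constraint $\int F^N(\nabla u)\,dx-\gamma\|u\|_p^N=1$; Euler's identity $F_\xi(\xi)\cdot\xi=F(\xi)$ together with testing the equation against $u_\epsilon$ (which may be assumed nonnegative) pins down $\lambda_\epsilon=\int_\Omega u_\epsilon^{N/(N-1)}e^{(\lambda_N-\epsilon)u_\epsilon^{N/(N-1)}}\,dx$. Regularity up to $C^{1,\alpha}(\overline{\Omega})$ comes from Trudinger-Moser (putting the right-hand side of~\eqref{3.1} in some $L^{s_0}$ with $s_0>1$), Lemma~\ref{lem2.2} and Sobolev embedding (upgrading $u_\epsilon$ to $L^\infty$), followed by standard quasilinear regularity for the $N$-Finsler-Laplacian (Tolksdorf, Lieberman, DiBenedetto). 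For the asymptotic statements, the elementary inequality $te^t\geq e^t-1$ for $t\geq 0$, with $t=(\lambda_N-\epsilon)u_\epsilon^{N/(N-1)}$, integrates to $\lambda_\epsilon\geq(\Lambda_{\gamma,\epsilon}-|\Omega|)/(\lambda_N-\epsilon)$; combined with $\Lambda_\gamma>|\Omega|$ and $\Lambda_{\gamma,\epsilon}\to\Lambda_\gamma$ (from the trivial bound $\Lambda_{\gamma,\epsilon}\leq\Lambda_\gamma$ and monotone convergence applied to near-maximizers of $\Lambda_\gamma$), this yields both $\liminf_{\epsilon\to 0}\lambda_\epsilon>0$ and~\eqref{3.2}. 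The main obstacle I foresee is the equi-integrability step, which depends delicately on the sharp range in Lemma~\ref{lem2.1} and on separately handling the case where the weak limit vanishes.
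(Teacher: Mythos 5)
Your proposal is correct and follows essentially the same route as the paper: direct method with a maximizing sequence, the concentration--compactness Lemma~\ref{lem2.1} (plus the plain anisotropic inequality when the weak limit vanishes) for the uniform $L^{s}$ bound, a rescaling contradiction to force $\|u_\epsilon\|_{N,F,\gamma,p}=1$, the same elementary inequality $e^{t}\le 1+te^{t}$ for $\liminf_{\epsilon\to0}\lambda_\epsilon>0$, and monotone convergence/Fatou for \eqref{3.2}. The only cosmetic difference is that you normalize the maximizing sequence at the outset and spell out the Lagrange-multiplier normalization of $\lambda_\epsilon$, which the paper leaves implicit.
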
 
\begin{proof}[\textbf{Proof}]
Let  $u_{\epsilon, n}$ be a maximizing sequence for $\Lambda_{\gamma, \epsilon}$, \emph{i.e.},   $u_{\epsilon, n}\in W_0^{1, N}(\Omega)$, $\|u _{\epsilon, n}\|_{N,F, \gamma, p}\leq 1$ and
    \be   \int_{\Omega}e^{(\lambda_{N}-\epsilon)
     |u_{\epsilon, n}|^{\frac{N}{N-1}}}dx \rightarrow \Lambda_{\gamma, \epsilon}  \lb{3.3}\ee
as $n\rightarrow \infty$. Since $\gamma<\gamma_1$, thus
   $$(1-\frac{\gamma}{\gamma_1})\int_{\Omega}F^{N}(\nabla u_{\epsilon, n})dx\leq\int_{\Omega}F^{N}(\nabla u_{\epsilon, n})dx-\gamma \| u_{\epsilon, n}\|_p^N\leq 1,$$
which lead to $u_{\epsilon, n}$ is bounded in $W^{1, N}_{0}(\Omega)$, so there exists some $u_\epsilon \in W^{1, N}_{0}(\Omega)$
such that up to a subsequence, $u_{\epsilon, n}\rightharpoonup u_{\epsilon}$ weakly in $W_0^{1, N}(\Omega)$, $u_{\epsilon, n}\rightarrow u_{\epsilon}$ strongly in
$L^q(\Omega)$ for any $q \geq 1$, and $u_{\epsilon, n}\rightarrow u_{\epsilon}$ a. e. in $\Omega$. We claim that $u_\epsilon \not\equiv 0$.   If
otherwise, then $\limsup\limits_{n\rightarrow \infty}\int_{\Omega}F^{N}(\nabla u_{\epsilon, n})dx\leq 1$. The anisotropic  Trudinger-Moser inequality implies  $e^{(\lambda_{N}-\epsilon)|u_{\epsilon, n}|^{\frac{N}{N-1}}}$ is uniformly bounded in $L^s(\Omega)$ for any $1<s<\frac{\lambda_{N}}{\lambda_{N}-\epsilon}$. Thus
  \be   \Lambda_{\gamma, \epsilon}=\lim_{n\rightarrow \infty} \int_{\Omega}e^{(\lambda_{N}-\epsilon)
     |u_{\epsilon, n}|^{\frac{N}{N-1}}}dx =|\Omega|,  \lb{3.4}\ee
which is impossible.  So  $u_\epsilon \not\equiv 0$ and  Lemma \ref{lem2.1} implies $\int_{\Omega}e^{(\lambda_{N}-\epsilon)
     |u_{\epsilon, n}|^{\frac{N}{N-1}}}dx$  in $L^s(\Omega)$ for some $s>1$ . Consequently, we have
 \be   \Lambda_{\gamma, \epsilon}=\lim_{n\rightarrow \infty} \int_{\Omega}e^{(\lambda_{N}-\epsilon)
     |u_{\epsilon, n}|^{\frac{N}{N-1}}}dx =\int_{\Omega}e^{(\lambda_{N}-\epsilon)
     |u_{\epsilon}|^{\frac{N}{N-1}}}dx,  \lb{3.5}\ee
Then  $u_\epsilon$ attains the supremum. We claim $\|u _{\epsilon}\|_{N,F, \gamma, p}=1$. In fact, if $\|u _{\epsilon}\|_{N,F,\gamma,p}<1$, then
  \be \Lambda_{\gamma, \epsilon}=\int_{\Omega}e^{(\lambda_{N}-\epsilon)
     |u_{\epsilon}|^{\frac{N}{N-1}}}dx < \int_{\Omega}e^{(\lambda_{N}-\epsilon)
     |\frac{u_{\epsilon}}{\|u _{\epsilon}\|_{N,F, \gamma, p}}|^{\frac{N}{N-1}}}dx \leq \Lambda_{N, \epsilon},  \lb{3.6}\ee
which is a contradiction. Furthermore, we know that $u_\epsilon$ satisfies the Euler--Lagrange equation in distributional sense.
By regularity theory  obtained in \cite{Lieberman},  we have  $u_\epsilon \in C^{1}(\overline{\Omega})$.

  Since $e^t\leq 1+te^t$ for $t\geq 0$, we have
 \begin{equation*}
\left.
\begin{aligned}[b]
  &\int_{\Omega}e^{(\lambda_{N}-\epsilon)
     |u_{\epsilon}|^{\frac{N}{N-1}}}dx\\
     \leq & |\Omega|+(\lambda_{N}-\epsilon)\int_{\Omega}|u_{\epsilon}|^{\frac{N}{N-1}}e^{(\lambda_{N}-\epsilon)
     |u_{\epsilon}|^{\frac{N}{N-1}}}dx\\
     = & |\Omega|+(\lambda_{N}-\epsilon)\lambda_{\epsilon}.
\end{aligned}
\right.
\end{equation*}
This leads to $\liminf_{\epsilon\rightarrow 0}\lambda _{\epsilon}>0.$

Obviously, $\limsup_{\epsilon\rightarrow 0}\Lambda _{\gamma, \epsilon}\leq \Lambda _{\gamma}$. On the other hand, for any $u\in W_0^{1, N}(\Omega)$ with $\|u _{\epsilon}\|_{N,F,\gamma,p}\leq1$, by  Fatou's lemma, we have
      $$\int_{\Omega}e^{\lambda_{N}
     |u|^{\frac{N}{N-1}}}dx\leq \liminf_{\epsilon\rightarrow 0} \int_{\Omega}e^{(\lambda_{N}-\epsilon)
     |u_{\epsilon}|^{\frac{N}{N-1}}}dx \leq  \liminf_{\epsilon\rightarrow 0}\Lambda _{\gamma, \epsilon}, $$
which implies that  $\liminf_{\epsilon\rightarrow 0}\Lambda _{\gamma, \epsilon} \geq \Lambda _{\gamma}$. Thus  $\liminf_{\epsilon\rightarrow 0}\Lambda_{\gamma, \epsilon}=\Lambda_{\gamma}.$
\end{proof}
\section{Maximizers of the critical case}\label{section 4}
In this section, by using blow-up analysis, we analyze the behavior of the maximizers $u_\epsilon$ in section 3. Since $u_\epsilon$ is bounded in $W^{1, N}_{0}(\Omega)$,
up to a subsequence, we can assume  $u_{\epsilon}\rightharpoonup u_{0}$ weakly in $W_0^{1, N}(\Omega)$, $u_{\epsilon}\rightarrow u_{0}$ strongly in
$L^q(\Omega)$ for any $q \geq 1$, and $u_{\epsilon}\rightarrow u_{0}$ a.e. in $\Omega$ as $\epsilon\rightarrow 0$.
\subsection{Blow-up analysis} \label{subsection 4.1}
Let $c_\epsilon = \max\limits_{\overline{\Omega}} u_\epsilon=u_\epsilon(x_\epsilon)$. If $c_\epsilon$ is bounded, then for any $u \in W^{1, N}_{0}(\Omega)$ with $\|u \|_{N,F,\gamma,p}\leq 1$,
by Lebesgue dominated convergence theorem,  we have
\begin{equation}
\left.
\begin{aligned}[b]
     \int_\Omega e^{\lambda_{N}|u|^{\frac{N}{N-1}}}dx=&\lim_{\epsilon\rightarrow 0} \int_\Omega e^{(\lambda_{N}-\epsilon)|u|^{\frac{N}{N-1}}}dx\\
     \leq&\lim_{\epsilon\rightarrow 0} \int_\Omega e^{(\lambda_{N}-\epsilon)|u_\epsilon|^{\frac{N}{N-1}}}dx\\
     =&\int_\Omega e^{\lambda_{N}|u_0|^{\frac{N}{N-1}}}dx
\end{aligned}
\right.    \lb{4.1}
\end{equation}
Therefore $u_0$ is the desired maximizer. Moreover, $u_\epsilon \rightarrow u_0$ and  $u_0 \in C^1(\overline{\Omega})$ by standard elliptic regularity
theory. In the following, we consider another case, we assume $c_\epsilon \rightarrow +\infty$ and $x_\epsilon\rightarrow x_0$ as $\epsilon \rightarrow 0$. We assume $x_0\in \Omega$, at the end of this section, we shall exclude the case $x_0\in \partial\Omega$.
\begin{lemma} \label{lem4.1}
$u_0 \equiv 0$ and  $ F^N(\nabla u_\epsilon)dx\rightharpoonup \delta_{x_0}$ weakly in the sense of measure as $\epsilon \rightarrow 0$, where $\delta_{x_0}$ is the Dirac measure at $x_0$.
\end{lemma}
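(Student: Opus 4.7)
My plan is to carry out the two conclusions sequentially, both relying on the concentration-compactness Lemma~\ref{lem2.1} and Lieberman's $C^{1,\alpha}$ regularity applied to the Euler--Lagrange equation \eqref{3.1}.

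For $u_0\equiv 0$, I would argue by contradiction. Assuming $u_0\not\equiv 0$, the Brezis--Lieb decomposition for the convex integrand $F^N(\nabla\cdot)$, combined with the strong $L^p$ convergence $u_\epsilon\to u_0$, yields
$$\|u_\epsilon-u_0\|_{N,F,\gamma,p}^N \;=\; 1 - \|u_0\|_{N,F,\gamma,p}^N + o(1).$$
Because $\gamma<\gamma_1$ and $u_0\not\equiv 0$, the quantity $\|u_0\|_{N,F,\gamma,p}^N$ is strictly positive, so the right-hand side is strictly below $1$; thus $q_N(u_0)>1$ and Lemma~\ref{lem2.1} delivers some $q>1$ for which $e^{\lambda_N q|u_\epsilon|^{N/(N-1)}}$ is uniformly bounded in $L^1(\Omega)$. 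The right-hand side of \eqref{3.1} is therefore uniformly bounded in $L^q(\Omega)$ (the subcritical lower-order term being harmless, since $u_\epsilon$ is bounded in every $L^s$), and Lieberman's regularity yields a uniform $C^{1,\alpha}(\overline\Omega)$ bound on $u_\epsilon$, contradicting $c_\epsilon\to\infty$.

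Having $u_0\equiv 0$ forces $\|u_\epsilon\|_p\to 0$ and $\int_\Omega F^N(\nabla u_\epsilon)\,dx\to 1$, so every weak-$*$ subsequential limit $\mu$ of $F^N(\nabla u_\epsilon)\,dx$ is a probability measure on $\overline\Omega$. The key localized tool is a cutoff argument: for $y\in\overline\Omega$ and small $r>0$ with $\mu(\overline{B_{3r}(y)})<1$, pick $\eta\in C_c^\infty(B_{3r}(y))$ with $\eta\equiv 1$ on $B_{2r}(y)$ and set $w_\epsilon=\eta u_\epsilon$. Using the subadditivity $F(a+b)\le F(a)+F(b)$ together with the elementary inequality $(a+b)^N\le(1+\delta)a^N+C_\delta b^N$ and $u_\epsilon\to 0$ in $L^N$, one checks $\int_\Omega F^N(\nabla w_\epsilon)\,dx<1$ eventually; rescaling $w_\epsilon$ to unit anisotropic norm and invoking Theorem~C then gives some $q>1$ with $e^{\lambda_N q|u_\epsilon|^{N/(N-1)}}$ uniformly bounded in $L^1(B_{2r}(y))$. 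Consequently the right-hand side of \eqref{3.1} is uniformly $L^q$-bounded on $B_{2r}(y)$, and Lieberman's interior estimates produce a uniform $C^{1,\alpha}(\overline{B_r(y)})$ bound on $u_\epsilon$. Applied first with $y=x_0$ under the contradiction hypothesis $\mu(\{x_0\})=0$, this bounds $u_\epsilon$ uniformly near $x_0$, contradicting $u_\epsilon(x_\epsilon)=c_\epsilon\to\infty$; hence $\mu(\{x_0\})>0$. For any $y_0\in\overline\Omega\setminus\{x_0\}$ one then has $\mu(\{y_0\})\le 1-\mu(\{x_0\})<1$, so for $r$ small with $x_0\notin\overline{B_{3r}(y_0)}$ the cutoff applies; combined with $u_\epsilon\to 0$ a.e.\ this gives $u_\epsilon\to 0$ in $C^1(B_r(y_0))$, whence $\mu(B_r(y_0))=0$. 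A finite covering of compact subsets of $\overline\Omega\setminus\{x_0\}$ concludes $\mu=\delta_{x_0}$.

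The chief obstacle will be arranging the strict inequality $\int F^N(\nabla w_\epsilon)\,dx<1$ in the cutoff step, since this is exactly what allows me to gain super-critical integrability from the anisotropic Trudinger--Moser inequality: the loss factor $(1+\delta)$ must be compensated by a strict bound on $\mu(\overline{B_{3r}(y)})$, and the cross term involving $u_\epsilon\nabla\eta$ has to be absorbed using the smallness of $\|u_\epsilon\|_N$. Boundary points $y\in\partial\Omega$ appearing in the covering will require Lieberman's boundary $C^{1,\alpha}$ estimates in place of the interior ones, but the overall structure is unchanged.
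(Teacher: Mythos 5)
Your proposal follows essentially the same route as the paper: ruling out $u_0\not\equiv 0$ via Lemma \ref{lem2.1} together with elliptic estimates applied to \eqref{3.1}, and ruling out $\mu\neq\delta_{x_0}$ via a cutoff function, the anisotropic Trudinger--Moser inequality, and local elliptic estimates. The remaining differences are cosmetic: the Brezis--Lieb step is superfluous since Lemma \ref{lem2.1} applies directly once $u_0\not\equiv 0$ (then $\|u_0\|_{N,F,\gamma,p}>0$, hence $q_N(u_0)>1$); the paper derives the contradiction in one stroke from $\mu(\overline{\mathcal{W}_r(x_0)})<1$ near the blow-up point instead of your two-stage covering argument; and with the right-hand side only in $L^q$ for $q$ close to $1$ the elliptic theory yields a uniform $L^\infty$ bound (which is all that is needed to contradict $c_\epsilon\to\infty$) rather than a full $C^{1,\alpha}(\overline{\Omega})$ bound.
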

\begin{proof}[\textbf{Proof}]
Suppose $u_0 \not \equiv 0$. Notice that $\liminf_{\epsilon\rightarrow 0}\lambda _{\epsilon}>0$, by  Lemma \ref{lem2.1} and H$\ddot{o}$lder inequality, we have
$\frac{1}{\lambda_\epsilon}u_\epsilon|u_\epsilon|^{\frac{2-N}{N-1}}e^{(\lambda_{N}-\epsilon)|u_\epsilon|^{\frac{N}{N-1}}}$ is uniformly bounded in $L^q(\Omega)$ for some $q>1$. Then by Lemma 2.2 in \cite{ZZ}, $u_\epsilon$ is uniformly bounded in $\Omega$, which contradicts $c_\epsilon\rightarrow +\infty$ as $\epsilon \rightarrow 0$. Hence $u_0\equiv 0$.

Notice that $\int_{\Omega}F^N(\nabla u_\epsilon)dx=1+\gamma \| u_\epsilon\|_p^{N}\rightarrow 1$. If $ F^N(\nabla u_\epsilon)dx\rightharpoonup \mu\neq \delta_{x_0}$ in the sense of measure as $\epsilon \rightarrow 0$, then there exists $\theta <1$ and $r>0$ small enough such that
           $$\lim_{\epsilon \rightarrow 0}\int\limits_{\mathcal{W}_r(x_0)}F^N(\nabla u_\epsilon)dx<\theta,$$
 Consider the cut-off function $\phi\in C_0^\infty(\Omega)$, which is supported in $\mathcal{W}_r(x_0)$ for some $r>0$, $0 \leq\phi \leq 1$  and $\phi = 1$ in $\mathcal{W}_{\frac{r}{2}}(x_0)$. Since $u_\epsilon\rightarrow 0$ in $L^q(\Omega)$ for any $q>1$, we have
           $$\limsup_{\epsilon \rightarrow 0}\int_{\mathcal{W}_r(x_0)}F^N(\nabla (\phi u_\epsilon)dx\leq \lim_{\epsilon \rightarrow 0}\int_{\mathcal{W}_r(x_0)}F^N(\nabla  u_\epsilon dx<\theta.$$
 For sufficiently small $\epsilon > 0$, using anisotropic Moser-Trudinger inequality to $\phi u_\epsilon$,we have $e^{\lambda_N u_{\epsilon}^{\frac{N}{N-1}}}dx$
 is uniformly bounded in $L^{s}(\mathcal{W}_{\frac{r}{2}}(x_0))$ for some $s>1$. From  Lemma 2.2 in  \cite{ZZ},  $u_\epsilon$ is  uniformly bounded in $\mathcal{W}_{\frac{r}{2}}(x_0)$,  which contradicts to $c_\epsilon\rightarrow +\infty$. Hence $ F^N(\nabla u_\epsilon)dx\rightharpoonup \delta_{0}$ as $\epsilon\rightarrow 0$.      
 \end{proof}

Let
   \be r_\epsilon^N=\lambda_\epsilon c_\epsilon^{-N/(N-1)}e^{-(\lambda_{N}-\epsilon)c_\epsilon^{N/(N-1)}}.  \label{4.2}\ee
Denote $\Omega_\epsilon=\{x\in \mathbb{R}^{N}: x_\epsilon+r_\epsilon x\in \Omega\}$. Define
      \be \psi_{ \epsilon}(x)= c_\epsilon^{-1}u_\epsilon(x_\epsilon+r_\epsilon x)  \lb{4.3}\ee
and
   \be \varphi_{ \epsilon}(x)= c_\epsilon^{N/(N-1)}(\psi_{ \epsilon}(x)-1)=c_\epsilon^{1/(N-1)}(u_\epsilon(x_\epsilon+r_\epsilon x)-c_\epsilon).  \lb{4.4}\ee
We have the following:
\begin{lemma} \label{lem4.2}
Let $r_\epsilon, \psi_{ \epsilon}(x)$ and $\varphi_{\epsilon}$ be defined as in \eqref{4.2}-\eqref{4.4}. Then\\
  (i) there hold $r_\epsilon \rightarrow 0$ as $\epsilon\rightarrow 0$;\\
  (ii) $\psi_{ \epsilon}(x)\rightarrow 1$ in $C_{loc}^1( \mathbb{R}^{N})$;\\
  (iii)  $\varphi_{ \epsilon}(x)\rightarrow\varphi (x)$ in $C_{loc}^1( \mathbb{R}^{N})$, where
                            \be \varphi (x)=-\frac{N-1}{\lambda_N}\log\bigg(1+ \kappa_{N}^{\frac{1}{N-1}}F^{o}(x)^{\frac{N}{N-1}}\bigg).  \lb{(4.5)}\ee
Moreover,
     \be \int_{\mathbb{R}^{N}}e^{\frac{N}{N-1}\lambda_N\varphi}dx=1.  \lb{(4.6)}\ee
\end{lemma}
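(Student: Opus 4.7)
The idea is to rescale the Euler--Lagrange equation \eqref{3.1} around the concentration point $x_\epsilon$ at the scale $r_\epsilon$ chosen in \eqref{4.2}, which is tuned precisely so that the exponential nonlinearity becomes of order one after rescaling. Elliptic regularity for $Q_N$ then delivers local compactness, and the three assertions follow from identifying the limit via the strong maximum principle (for $\psi$) and a Liouville classification combined with a direct integral computation (for $\varphi$).

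For (i), bound $\lambda_\epsilon = \int_\Omega u_\epsilon^{N/(N-1)} e^{(\lambda_N-\epsilon)u_\epsilon^{N/(N-1)}}\,dx \leq c_\epsilon^{N/(N-1)} \Lambda_{\gamma,\epsilon} \leq C c_\epsilon^{N/(N-1)}$ using Theorem \ref{thm1.1}. Substituting into \eqref{4.2} gives $r_\epsilon^N \leq C e^{-(\lambda_N-\epsilon) c_\epsilon^{N/(N-1)}} \to 0$, so in particular $\Omega_\epsilon$ exhausts $\mathbb{R}^N$.

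For (ii), the change of variables $y = x_\epsilon + r_\epsilon x$, using that $F^{N-1}$ and $F_{\xi_i}$ are homogeneous of degrees $N-1$ and $0$ respectively, turns \eqref{3.1} into
\begin{equation*}
-Q_N \psi_\epsilon = c_\epsilon^{-N} \psi_\epsilon^{\frac{1}{N-1}} e^{(\lambda_N-\epsilon) c_\epsilon^{N/(N-1)}(\psi_\epsilon^{N/(N-1)}-1)} + O\!\left(c_\epsilon^{p-N} r_\epsilon^N\right)
\end{equation*}
on $\Omega_\epsilon$. Since $0\le\psi_\epsilon\le 1$ the exponent is non-positive and the right-hand side tends to $0$ locally uniformly. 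Lieberman's $C^{1,\alpha}$ regularity \cite{Lieberman} and a diagonal subsequence argument yield local $C^{1,\alpha}$ convergence to a limit $\psi \in C^1_{\mathrm{loc}}(\mathbb{R}^N)$ solving $-Q_N\psi = 0$ with $0\le\psi\le 1$ and $\psi(0)=1$. The strong maximum principle then forces $\psi \equiv 1$.

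For (iii), the analogous rescaling applied to $\varphi_\epsilon$, together with the identity $\frac{c_\epsilon r_\epsilon^N}{\lambda_\epsilon} u_\epsilon^{1/(N-1)} = \psi_\epsilon^{1/(N-1)}$, gives
\begin{equation*}
-Q_N \varphi_\epsilon = \psi_\epsilon^{\frac{1}{N-1}} e^{(\lambda_N-\epsilon) c_\epsilon^{N/(N-1)}(\psi_\epsilon^{N/(N-1)}-1)} + o(1).
\end{equation*}
Because $\varphi_\epsilon(0)=0$, $\varphi_\epsilon \leq 0$ and the right-hand side is bounded by $1$, elliptic theory provides $W^{1,q}_{\mathrm{loc}}$ bounds for every $q<\infty$ and hence $C^{1,\alpha}_{\mathrm{loc}}$ compactness. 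A Taylor expansion of the exponent, using (ii), yields $c_\epsilon^{N/(N-1)}(\psi_\epsilon^{N/(N-1)}-1) \to \tfrac{N}{N-1}\varphi$ locally, so any limit $\varphi$ solves $-Q_N\varphi = e^{\frac{N}{N-1}\lambda_N\varphi}$ on $\mathbb{R}^N$ with $\varphi(0)=0$ and $\varphi\le 0$. A direct computation (using $F(\nabla F^o)=1$, Euler's identity and the $0$-homogeneity of $F_{\xi_i}$) verifies that the candidate $\varphi(x) = -\frac{N-1}{\lambda_N}\log(1+\kappa_N^{1/(N-1)}F^o(x)^{N/(N-1)})$ solves this equation; a Liouville-type classification, combined with the sign and normalization constraints, forces equality. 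Finally, the identity \eqref{(4.6)} is obtained by rewriting $\lambda_\epsilon = \int_\Omega u_\epsilon^{N/(N-1)} e^{(\lambda_N-\epsilon)u_\epsilon^{N/(N-1)}}\,dx$ in rescaled coordinates, which produces
\begin{equation*}
\int_{\Omega_\epsilon} \psi_\epsilon^{\frac{N}{N-1}} e^{(\lambda_N-\epsilon)c_\epsilon^{N/(N-1)}(\psi_\epsilon^{N/(N-1)}-1)}\,dx = 1,
\end{equation*}
and passing to the limit by dominated convergence.

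\textbf{Main obstacle.} The most delicate step is justifying the passage to the limit in the last integral identity: pointwise convergence is immediate from (ii)--(iii), but one needs a uniform tail estimate ensuring that the mass does not escape to infinity. The natural bound is that the integrand is eventually dominated by $C(1+F^o(x)^{N/(N-1)})^{-N}$, which has finite integral by the explicit profile; establishing this uniformly in $\epsilon$ requires a capacity-type comparison with the fundamental solution of $Q_N$. The Liouville classification needed to identify $\varphi$ uniquely is the other non-trivial ingredient.
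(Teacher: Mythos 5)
Parts (i) and (ii) of your argument track the paper's proof closely (the paper uses the Liouville theorem for bounded $Q_N$-harmonic functions where you use the strong maximum principle; both are fine). Two remarks there: your bound $\lambda_\epsilon\le c_\epsilon^{N/(N-1)}\Lambda_{\gamma,\epsilon}\le Cc_\epsilon^{N/(N-1)}$ ``using Theorem \ref{thm1.1}'' is circular, since Theorem \ref{thm1.1} is proved \emph{from} this blow-up analysis; the paper avoids this by splitting the exponent, $e^{(\lambda_N-\epsilon)u_\epsilon^{N/(N-1)}}\le e^{(\lambda_N/2-\epsilon)c_\epsilon^{N/(N-1)}}e^{\frac{\lambda_N}{2}u_\epsilon^{N/(N-1)}}$, and invoking only the subcritical bound $\Lambda_{\gamma,\lambda_N/2}<\infty$. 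Also, writing the lower-order term as $O(c_\epsilon^{p-N}r_\epsilon^N)$ hides the factor $\|u_\epsilon\|_p^{N-p}$, which tends to $+\infty$ when $p>N$; the paper needs a separate estimate ($\|u_\epsilon\|_p^{N-p}\le c_\epsilon^{N-p}r_\epsilon^{N^2/p-N}(\int_{B_R}\psi_\epsilon^p)^{N/p-1}$) to control it.

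The genuine gap is in (iii) and the normalization \eqref{(4.6)}, where your logical order cannot be carried out as stated. You propose to first identify $\varphi$ by ``a Liouville-type classification, combined with the sign and normalization constraints,'' and only afterwards compute $\int_{\mathbb{R}^N}e^{\frac{N}{N-1}\lambda_N\varphi}dx=1$. But the classification of entire solutions of $-Q_N\varphi=e^{\frac{N}{N-1}\lambda_N\varphi}$ (the anisotropic Liouville equation) requires the finite-mass hypothesis $\int_{\mathbb{R}^N}e^{\frac{N}{N-1}\lambda_N\varphi}dx<\infty$ as an input; without it the conditions $\varphi\le\varphi(0)=0$ do not single out the profile \eqref{(4.5)} (already for $N=2$, $F=|\cdot|$ there are infinite-mass solutions). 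The paper therefore proceeds in the opposite order: it first proves $\int_{\mathbb{R}^N}e^{\frac{N}{N-1}\lambda_N\varphi}dx\le 1$ by restricting the rescaled identity to Wulff balls $\mathcal{W}_R(0)$ and letting $R\to\infty$ (a Fatou-type argument needing no tail control), then obtains the reverse inequality $\ge 1$ from the co-area formula and the anisotropic isoperimetric inequality applied to the limit equation, and only then invokes the symmetry/classification result of \cite{WX1}. Your alternative route to \eqref{(4.6)} --- dominated convergence in $\int_{\Omega_\epsilon}\psi_\epsilon^{N/(N-1)}e^{(\lambda_N-\epsilon)c_\epsilon^{N/(N-1)}(\psi_\epsilon^{N/(N-1)}-1)}dx=1$ --- requires exactly the uniform-in-$\epsilon$ decay of the integrand at large $F^o(x)$ that you flag as the ``main obstacle'' and do not establish; indeed, whether any mass escapes into the neck region $\Omega\setminus\mathcal{W}_{Rr_\epsilon}(x_\epsilon)$ is precisely what Lemmas \ref{lem4.3}--\ref{lem4.5} and the capacity estimate of Lemma \ref{lem4.8} are designed to settle later, by other means. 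As written, both the classification step and the equality in \eqref{(4.6)} rest on unproven assertions, so part (iii) is not established.
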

\begin{proof}[\textbf{Proof}]
(i) From $\eqref{4.2}$, we have
\begin{equation*}
\left.
\begin{aligned}[b]
    r_\epsilon^Nc_\epsilon^{N/(N-1)}=&\lambda_\epsilon e^{-(\lambda_{N}-\epsilon)c_\epsilon^{N/(N-1)}}\\
    =&\int_\Omega  |u_\epsilon|^{\frac{N}{N-1}}e^{(\lambda_{N}-\epsilon)|u_\epsilon|^{\frac{N}{N-1}}}dx\cdot e^{-(\lambda_{N}-\epsilon)c_\epsilon^{N/(N-1)}}\\
     \leq &  c_\epsilon^{N/(N-1)}e^{(\frac{\lambda_N}{2}-\epsilon)c_\epsilon^{N/(N-1)}}\int_\Omega e^{\frac{\lambda_N}{2}|u_\epsilon|^{\frac{N}{N-1}}}dx\cdot e^{-(\lambda_N-\epsilon)c_\epsilon^{N/(N-1)}}\\
       \leq &Cc_\epsilon^{N/(N-1)}e^{-\frac{\lambda_N}{2}c_\epsilon^{N/(N-1)}}\rightarrow 0,
\end{aligned}
\right.
\end{equation*}
as $\epsilon \rightarrow 0$. Thus $\lim\limits_{\epsilon \rightarrow 0}r_\epsilon = 0$.

(ii) By a direction calculation, $0\leq \psi_{\epsilon} \leq 1$ and $\psi_{\epsilon}$ is a weak solution to
\begin{equation*}
\left.
\begin{aligned}[b]
 &-\mathrm{div}(F^{N-1}(\nabla \psi_{ \epsilon})F_{\xi}(\nabla \psi_{ \epsilon}))\\
 &=c_\epsilon^{-N}e^{(\lambda_{N}-\epsilon)c_\epsilon^{N/(N-1)}(\psi_{\epsilon}^{\frac{N}{N-1}}-1)}\psi_{\epsilon}^{\frac{1}{N-1}}+\gamma c_\epsilon^{p-N} r_\epsilon^N\|u_\epsilon\|_p^{N-p}\psi_{\epsilon}^{p-1}
    ~~~~~~in~~~\Omega_\epsilon.
 \end{aligned}
\right.
\end{equation*}
Since
$c_\epsilon^{-N}e^{(\lambda_{N}-\epsilon)c_\epsilon^{N/(N-1)}(\psi_{\epsilon}^{\frac{N}{N-1}}-1)}\psi_{\epsilon}^{\frac{1}{N-1}}\leq c_\epsilon^{-N}\rightarrow 0$ as $\epsilon \rightarrow 0$,
and
\begin{equation*}
\left.
\begin{aligned}[b]
  &\bigg(\int\limits_{B_{r_\epsilon^{-1}}}( c_\epsilon^{p-N} r_\epsilon^N\|u_\epsilon\|_p^{N-p}\psi_{\epsilon}^{p-1})^{p/(p-1)}dx\bigg)^{\frac{p-1}{p}}\\
= &c_\epsilon^{1-N} r_\epsilon^{\frac{N}{p}}\|u_\epsilon\|_p^{N-1}\rightarrow 0 ~~as~~ \epsilon \rightarrow 0.
 \end{aligned}
\right.
\end{equation*}
Thus we obtain that $\mathrm{div}(F^{N-1}(\nabla \psi_{ \epsilon})F_{\xi}(\nabla \psi_{ \epsilon}))$ is bounded in $L^{p/(p-1)}(B_{r_\epsilon^{-1}})$. Using elliptic regularity theory (see\cite{P.Tolksdorf}), we have $\psi_{\epsilon}\rightarrow \psi$ in $C_{loc}^{1}(\mathbb{R}^{N})$, where $\psi$ is
a weak solution to the equation
 $$-\mathrm{div}(F^{N-1}(\nabla \psi)F_{\xi}(\nabla \psi))=0~~~ in~~~ \mathbb{R}^{N}.$$
The Liouville theorem (see \cite{HKM}) implies $\psi\equiv 1.$

(iii) We have
\begin{equation*}
\left.
\begin{aligned}[b]
 &-\mathrm{div}(F^{N-1}(\nabla \varphi_{ \epsilon})F_{\xi}(\nabla \varphi_{ \epsilon}))\\
 &=e^{(\lambda_{N}-\epsilon)c_\epsilon^{N/(N-1)}(\psi_{\epsilon}^{\frac{N}{N-1}}-1)}\psi_{\epsilon}^{\frac{1}{N-1}}+\gamma c_\epsilon^{p} r_\epsilon^N\|u_\epsilon\|_p^{N-p}\psi_{\epsilon}^{p-1} ~~~~~~in~~~\Omega_\epsilon.
 \end{aligned}
\right.
\end{equation*}
When $p>N$,  for $R > 0$ and sufficiently small $\epsilon$,
\begin{equation*}
\left.
\begin{aligned}[b]
 \|u_\epsilon\|_p^{N-p}\leq&\bigg(\int\limits_{B_{Rr_\epsilon}}u_\epsilon^pdx\bigg)^{N/P-1}\\
                       =&c_\epsilon^{N-p}r_\epsilon^{N^2/p-N}\bigg(\int\limits_{B_{R}}\psi_\epsilon^pdx\bigg)^{N/P-1}
 \end{aligned}
\right.
\end{equation*}
Since $ 0\leq\psi_{\epsilon} \leq 1$, $\psi_{\epsilon} \rightarrow 1$ in $C_{loc}^1( \mathbb{R}^{N})$, we have $\int\limits_{B_{R}}\psi^pdx>0$. Thus
  \begin{equation*}
\left.
\begin{aligned}[b]
 c_\epsilon^pr_\epsilon^N\|u_\epsilon\|_p^{N-p}\psi_{\epsilon}^{p-1}
 \leq&2\bigg(\int\limits_{B_{R}}\psi_\epsilon^pdx\bigg)^{N/P-1}c_\epsilon^Nr_\epsilon^{N^2/p}\rightarrow 0 ~~as~~\epsilon \rightarrow0.
 \end{aligned}
\right.
\end{equation*}
When $1 < p \leq N$, obviously,   $c_\epsilon^pr_\epsilon^N\|u_\epsilon\|_p^{N-p}\psi_{\epsilon}^{p-1}\rightarrow 0$  as $\epsilon \rightarrow0$.
From regularity theory in \cite{P.Tolksdorf}, up to a subsequence, there exists  $\varphi \in C^1(\mathbb{R}^{N})$ such that $ \varphi_{ \epsilon}\rightarrow \varphi$ in $C^{1}_{loc}(\mathbb{R}^{N})$. From the definition of $\psi_\epsilon$ and $\varphi_\epsilon$,  we have
\begin{equation*}
\left.
\begin{aligned}[b]
 u_\epsilon(x_\epsilon+r_\epsilon x)^{\frac{N}{N-1}}-c_\epsilon^{\frac{N}{N-1}}&=c_\epsilon^{\frac{N}{N-1}}\bigg(\psi_\epsilon(x)^{\frac{N}{N-1}}- 1\bigg)\\
 &= c_\epsilon^{\frac{N}{N-1}}\bigg((1+\psi_\epsilon(x)-1)^{\frac{N}{N-1}}- 1\bigg)\\
  &=\frac{N}{N-1}\varphi_{\epsilon}(x)+c_\epsilon^{\frac{N}{N-1}}O((\psi_\epsilon(x)-1)^2)\\
 &=\frac{N}{N-1}\varphi_{\epsilon}(x)+O(\psi_\epsilon(x)-1).
 \end{aligned}
\right.
\end{equation*}
By $\epsilon\rightarrow 0$,  $\varphi$ satisfies
  \begin{equation*}
   \left\{  \begin{array}{l}
          -\mathrm{div}(F^{N-1}(\nabla \varphi)F_{\xi}(\nabla \varphi))=e^{\frac{N}{N-1}\lambda_{N}\varphi} ~~~~~~in~~~\mathbb{R}^{N},\\
         \varphi(x)\leq \varphi(0)=0.
         \end{array}
   \right.   
 \end{equation*}
Combing \eqref{4.2} and \eqref{4.3}, for any $R > 0$,      we have
\begin{equation*}
\left.
\begin{aligned}[b]
  \int\limits_{\mathcal{W}_R(0)}e^{\frac{N}{N-1}\lambda_N\varphi}dx
  =&\lim_{\epsilon \rightarrow 0}\int\limits_{\mathcal{W}_R(0)}e^{(\lambda_N-\epsilon)( u_\epsilon(x_\epsilon+r_\epsilon x)^{\frac{N}{N-1}}-c_\epsilon^{\frac{N}{N-1}})}dx\\
    =&\lim_{\epsilon \rightarrow 0}\lambda_\epsilon^{-1} \int\limits_{\mathcal{W}_Rr_\epsilon(x_\epsilon)}c_\epsilon^{N/(N-1)}e^{(\lambda_N-\epsilon) u_\epsilon(y)^{\frac{N}{N-1}}}dy\\
     =&\lim_{\epsilon \rightarrow 0}\lambda_\epsilon^{-1} \int\limits_{\mathcal{W}_Rr_\epsilon(x_\epsilon)}u_\epsilon(y)^{N/(N-1)}e^{(\lambda_N-\epsilon) u_\epsilon(y)^{\frac{N}{N-1}}}dy\\
     \leq& 1.
 \end{aligned}
\right.
\end{equation*}
This leads to
 $$\int\limits_{\mathbb{R}^{N}}e^{\frac{N}{N-1}\lambda_N\varphi}dx\leq 1.$$
From co-area formula \eqref{co-area } and isoperimetric inequality \eqref{isoperimetric}, through a simple computation, it follows from H\"oder inequlity, 
 we have $\int_{\mathbb{R}^{N}}e^{\frac{N}{N-1}\lambda_N\varphi}dx\geq 1.$
Thus
    \be \int_{\mathbb{R}^{N}}e^{\frac{N}{N-1}\lambda_N\varphi}dx=1,  
    \lb{4.7}\ee
 which implies $\varphi$ is  symmetric with respect to $F^o$, i.e., $\varphi(x)=\varphi(F^o(x))$(see \cite{WX1}, Prop. 6.1). Thus we get
  \begin{equation}\label{4.8}
  \varphi(x)=-\frac{N-1}{\lambda_N}\log\bigg(1+ \kappa_{N}^{\frac{1}{N-1}}F^{o}(x)^{\frac{N}{N-1}}\bigg).
  \end{equation}
\end{proof}

Denote $\mathcal{W}_{R}(x_\epsilon)$ be a Wulff ball of radius $R$
with center at $x_\epsilon$. For any $0 < a < 1$, use the notation
   $$u_{\epsilon, a}=\min\{u_{\epsilon}, a c_\epsilon\}.$$
Then we have the following:
\begin{lemma} \label{lem4.3}
For any $0 < a < 1$, there holds
    $$\lim_{\epsilon \rightarrow 0}\int_\Omega F^N(\nabla u_{\epsilon, a})dx=a.$$
\end{lemma}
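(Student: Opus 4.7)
The plan is to test the Euler--Lagrange equation \eqref{3.1} against $u_{\epsilon,a}=\min\{u_\epsilon,ac_\epsilon\}\in W_0^{1,N}(\Omega)$, turn this into an identity for $\int_\Omega F^N(\nabla u_{\epsilon,a})\,dx$, and then match the resulting exponential integral with the bubble profile already identified in Lemma \ref{lem4.2}. First I would observe that $\nabla u_{\epsilon,a}=\nabla u_\epsilon\cdot\mathbf{1}_{\{u_\epsilon\le ac_\epsilon\}}$ and that Euler's identity $\xi\cdot F_\xi(\xi)=F(\xi)$, combined with integration by parts in \eqref{3.1}, yields
\begin{equation*}
\int_\Omega F^N(\nabla u_{\epsilon,a})\,dx=\frac{1}{\lambda_\epsilon}\int_\Omega u_{\epsilon,a}\,u_\epsilon^{\frac{1}{N-1}} e^{(\lambda_N-\epsilon)u_\epsilon^{\frac{N}{N-1}}}\,dx+\gamma\|u_\epsilon\|_p^{N-p}\int_\Omega u_{\epsilon,a}\,u_\epsilon^{p-1}\,dx.
\end{equation*}
The last summand is bounded by $\gamma\|u_\epsilon\|_p^N$, which vanishes because $u_\epsilon\to 0$ in $L^p(\Omega)$ by Lemma \ref{lem4.1}.

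I would then decompose the first integral as $I_1^\epsilon+I_2^\epsilon$, using $u_{\epsilon,a}=u_\epsilon$ on $\{u_\epsilon\le ac_\epsilon\}$ and $u_{\epsilon,a}=ac_\epsilon$ on $\{u_\epsilon>ac_\epsilon\}$, where
\begin{equation*}
I_1^\epsilon=\frac{1}{\lambda_\epsilon}\int_{\{u_\epsilon\le ac_\epsilon\}}u_\epsilon^{\frac{N}{N-1}}e^{(\lambda_N-\epsilon)u_\epsilon^{\frac{N}{N-1}}}\,dx,\quad I_2^\epsilon=\frac{ac_\epsilon}{\lambda_\epsilon}\int_{\{u_\epsilon>ac_\epsilon\}}u_\epsilon^{\frac{1}{N-1}}e^{(\lambda_N-\epsilon)u_\epsilon^{\frac{N}{N-1}}}\,dx.
\end{equation*}
The crucial geometric input, provided by Lemma \ref{lem4.2}(ii), is that $\psi_\epsilon\to 1$ uniformly on each $\mathcal{W}_R(0)$; hence for every fixed $R>0$ and all small $\epsilon$ one has $\mathcal{W}_{Rr_\epsilon}(x_\epsilon)\subset\{u_\epsilon>ac_\epsilon\}$, so that $\{u_\epsilon\le ac_\epsilon\}\subset\Omega\setminus\mathcal{W}_{Rr_\epsilon}(x_\epsilon)$ eventually.

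For $I_1^\epsilon$ I would combine this inclusion with the normalization $\lambda_\epsilon^{-1}\int_\Omega u_\epsilon^{N/(N-1)}e^{(\lambda_N-\epsilon)u_\epsilon^{N/(N-1)}}\,dx=1$ and the bubble-mass computation extracted from Lemma \ref{lem4.2},
\begin{equation*}
\frac{1}{\lambda_\epsilon}\int_{\mathcal{W}_{Rr_\epsilon}(x_\epsilon)}u_\epsilon^{\frac{N}{N-1}}e^{(\lambda_N-\epsilon)u_\epsilon^{\frac{N}{N-1}}}\,dx\longrightarrow\int_{\mathcal{W}_R(0)}e^{\frac{N}{N-1}\lambda_N\varphi}\,dy,
\end{equation*}
the right-hand side of which tends to $1$ as $R\to\infty$ by \eqref{4.7}; this forces $I_1^\epsilon\to 0$ after first letting $\epsilon\to 0$ and then $R\to\infty$. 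For $I_2^\epsilon$ I would again split at the bubble scale: on $\Omega\setminus\mathcal{W}_{Rr_\epsilon}(x_\epsilon)$ the pointwise bound $ac_\epsilon u_\epsilon^{1/(N-1)}\le u_\epsilon^{N/(N-1)}$ valid on $\{u_\epsilon\ge ac_\epsilon\}$ reduces the contribution to the same outside-bubble mass, hence it $\to 0$. On $\mathcal{W}_{Rr_\epsilon}(x_\epsilon)$ I would change variables $y=(x-x_\epsilon)/r_\epsilon$, substitute $u_\epsilon(x_\epsilon+r_\epsilon y)=c_\epsilon\psi_\epsilon(y)$, use the cancellation $c_\epsilon^{N/(N-1)}r_\epsilon^N/\lambda_\epsilon=e^{-(\lambda_N-\epsilon)c_\epsilon^{N/(N-1)}}$ coming from \eqref{4.2}, and the expansion $(\lambda_N-\epsilon)c_\epsilon^{N/(N-1)}(\psi_\epsilon^{N/(N-1)}-1)\to\frac{N}{N-1}\lambda_N\varphi$ derived in the proof of Lemma \ref{lem4.2}(iii); together with $\psi_\epsilon^{1/(N-1)}\to 1$ uniformly, this gives $I_2^\epsilon\to a\int_{\mathcal{W}_R(0)}e^{\frac{N}{N-1}\lambda_N\varphi}\,dy\to a$ under the same double limit.

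The anticipated main obstacle is precisely the uniform control on the bubble needed for $I_2^\epsilon$: one must combine the $C^1_{loc}$ convergence $\varphi_\epsilon\to\varphi$ with a quantitative expansion of $c_\epsilon^{N/(N-1)}(\psi_\epsilon^{N/(N-1)}-1)$ in $\varphi_\epsilon$ in order to absorb the $O(\psi_\epsilon-1)$ error and pass to the limit under the exponential. Once this dominated-convergence step is secured on each fixed $\mathcal{W}_R(0)$, the conclusion $\int_\Omega F^N(\nabla u_{\epsilon,a})\,dx=I_1^\epsilon+I_2^\epsilon+o(1)\to 0+a=a$ follows by the $\epsilon\to 0$ then $R\to\infty$ diagonalization already used in the proof of Lemma \ref{lem4.2}.
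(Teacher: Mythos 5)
Your proof is correct, and it uses the same key ingredients as the paper (testing \eqref{3.1} with $u_{\epsilon,a}$, the change of variable to the bubble, and the mass identity $\int_{\mathbb{R}^N}e^{\frac{N}{N-1}\lambda_N\varphi}\,dy=1$), but the way you close the argument is genuinely different. The paper discards the non-negative terms $I_1^\epsilon$ and the outside-bubble portion of $I_2^\epsilon$ to obtain only $\liminf_\epsilon\int_\Omega F^N(\nabla u_{\epsilon,a})\,dx\ge a$, and then performs a second test with $u_\epsilon-u_{\epsilon,a}$ to obtain $\liminf_\epsilon\int_\Omega F^N(\nabla(u_\epsilon-u_{\epsilon,a}))\,dx\ge 1-a$; combining this with the pointwise splitting $F^N(\nabla u_{\epsilon,a})+F^N(\nabla(u_\epsilon-u_{\epsilon,a}))=F^N(\nabla u_\epsilon)$ and $\int_\Omega F^N(\nabla u_\epsilon)\,dx=1+\gamma\|u_\epsilon\|_p^N\to1$ gives the matching $\limsup\le a$. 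You avoid the second test entirely by showing directly that both $I_1^\epsilon$ and the part of $I_2^\epsilon$ supported outside $\mathcal{W}_{Rr_\epsilon}(x_\epsilon)$ vanish, using the pointwise bound $u_{\epsilon,a}u_\epsilon^{1/(N-1)}\le u_\epsilon^{N/(N-1)}$ together with the normalization $\lambda_\epsilon^{-1}\int_\Omega u_\epsilon^{N/(N-1)}e^{(\lambda_N-\epsilon)u_\epsilon^{N/(N-1)}}\,dx=1$ and the bubble-mass computation already carried out in Lemma \ref{lem4.2}. Both routes are sound: yours is more self-contained (one test function, one double limit $\epsilon\to0$, $R\to\infty$), while the paper's complementarity trick trades that for a second, symmetric application of the same estimate and a one-line algebraic finish.
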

\begin{proof}[\textbf{Proof}]
 Testing the equation \eqref{3.2}, we have
\begin{equation*}
\left.
\begin{aligned}[b]
  \int_{\Omega}F^N(\nabla u_{\epsilon, a})dx =&\int_{\Omega}F^{N-1}(\nabla u_{\epsilon})F_{\xi}(\nabla u_{\epsilon})\cdot u_{\epsilon, a}dx\\
   =&-\int_{\Omega}\mathrm{div}(F^{N-1}(\nabla u_{\epsilon})F_{\xi}(\nabla u_{\epsilon}))u_{\epsilon, a}dx\\
   =&\int_{\Omega}\frac{1}{\lambda_\epsilon}u_\epsilon^{\frac{1}{N-1}}e^{(\lambda_{N}-\epsilon)u_\epsilon^{\frac{N}{N-1}}}u_{\epsilon, a}dx+\gamma\int_{\Omega}\|u_\epsilon\|_p^{N-p} u_\epsilon^{p-1}u_{\epsilon, a}dx.
 \end{aligned}
\right.
\end{equation*}
 For any $R > 0$, we have $\mathcal{W}_{R}(x_\epsilon)\subset \{u_\epsilon > ac_\epsilon \}$ for $\epsilon > 0$ small enough. Thus
 \begin{equation*}
\left.
\begin{aligned}[b]
  \int_{\Omega}F^N(\nabla u_{\epsilon, a})dx
   =&\int_{\{u_\epsilon \leq ac_\epsilon\}}\frac{1}{\lambda_\epsilon}u_\epsilon^{\frac{1}{N-1}}e^{(\lambda_{N}-\epsilon)u_\epsilon^{\frac{N}{N-1}}}u_{\epsilon, a}dx\\
   &+\int_{\{u_\epsilon >ac_\epsilon\}}\frac{1}{\lambda_\epsilon}u_\epsilon^{\frac{1}{N-1}}e^{(\lambda_{N}-\epsilon)u_\epsilon^{\frac{N}{N-1}}}u_{\epsilon, a}dx
   +\gamma\int_{\Omega}\|u_\epsilon\|_p^{N-p} u_\epsilon^{p-1}u_{\epsilon, a}dx\\
  >& ac_\epsilon \int\limits_{\mathcal{W}_{R}(x_\epsilon)}\frac{1}{\lambda_\epsilon}u_\epsilon^{\frac{1}{N-1}}e^{(\lambda_{N}-\epsilon)u_\epsilon^{\frac{N}{N-1}}}u_{\epsilon, a}dx+o_\epsilon(1).
 \end{aligned}
\right.
\end{equation*}
Set $ x = x_\epsilon + r_\epsilon y$, we have
\begin{equation*}
\left.
\begin{aligned}[b]
             &ac_\epsilon \int\limits_{\mathcal{W}_{R}(x_\epsilon)}\frac{1}{\lambda_\epsilon}u_\epsilon^{\frac{1}{N-1}}e^{(\lambda_{N}-\epsilon)u_\epsilon^{\frac{N}{N-1}}}u_{\epsilon, a}dx\\
             =&a \int\limits_{\mathcal{W}_{R}(0)}\psi_\epsilon(y)^{\frac{1}{N-1}}e^{(\lambda_{N}-\epsilon)c_\epsilon^{\frac{N}{N-1}}(\psi_\epsilon(y)^{\frac{N}{N-1}}- 1)}dy\rightarrow \int\limits_{\mathcal{W}_{R}(0)}e^{\frac{N}{N-1}\lambda_N\varphi}dy.
\end{aligned}
\right.
\end{equation*}
Letting $R \rightarrow \infty$, we derive that
 $$\liminf\limits_{\epsilon \rightarrow 0}\int_{\Omega}F^N(\nabla u_{\epsilon, a})dx\geq a.$$
Similarly, we choose $(u_{\epsilon}- u_{\epsilon, a})$ as a test function of  \eqref{3.2}, we obtain $\liminf\limits_{\epsilon \rightarrow 0}\int_{\Omega}F^N(\nabla( u_{\epsilon}- u_{\epsilon, a}))\geq 1-a$. Notice that
 \begin{equation*}
\left.
\begin{aligned}[b]
  \int_{\Omega}F^N(\nabla u_{\epsilon, a})dx&=\int_{\Omega}F^N(\nabla u_{\epsilon})dx- \int_{\Omega}F^N(\nabla( u_{\epsilon}- u_{\epsilon, a}))dx\\
                                 &=1+ \gamma\| u_\epsilon\|_p^{N}-\int_{\Omega}F^N(\nabla( u_{\epsilon}- u_{\epsilon, a}))dx.
\end{aligned}
\right.
\end{equation*}
which implies
   $$\limsup\limits_{\epsilon \rightarrow 0}\int_{\Omega}F^N(\nabla u_{\epsilon, a})dx\leq a.$$
We have finished the proof of the lemma. 
\end{proof}
\begin{lemma} \label{lem4.4}
There holds
$$\lim_{\epsilon\rightarrow 0}\int_{\Omega}e^{(\lambda_{N}-\epsilon)
     |u_\epsilon|^{\frac{N}{N-1}}}dx\leq |\Omega|+\limsup_{\epsilon\rightarrow 0}\frac{\lambda_\epsilon}{c_\epsilon^{N/(N-1)}}.$$
\end{lemma}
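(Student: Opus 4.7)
The plan is to split the domain at level $\alpha c_\epsilon$ for a parameter $\alpha \in (0,1)$, estimate the super- and sub-level pieces separately, and then send $\alpha \to 1^-$ at the end.

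Fix $\alpha \in (0,1)$. On the superlevel set $\{u_\epsilon > \alpha c_\epsilon\}$ I insert the factor $u_\epsilon^{N/(N-1)}/(\alpha c_\epsilon)^{N/(N-1)} \geq 1$ into the integrand and use the definition of $\lambda_\epsilon$ to obtain
$$\int_{\{u_\epsilon > \alpha c_\epsilon\}} e^{(\lambda_N - \epsilon) u_\epsilon^{N/(N-1)}} dx \leq \frac{1}{\alpha^{N/(N-1)} c_\epsilon^{N/(N-1)}} \int_\Omega u_\epsilon^{N/(N-1)} e^{(\lambda_N-\epsilon) u_\epsilon^{N/(N-1)}} dx = \frac{\lambda_\epsilon}{\alpha^{N/(N-1)} c_\epsilon^{N/(N-1)}}.$$
On the sublevel set $\{u_\epsilon \leq \alpha c_\epsilon\}$, the truncation $u_{\epsilon,\alpha} = \min\{u_\epsilon, \alpha c_\epsilon\}$ from Lemma \ref{lem4.3} coincides with $u_\epsilon$, so this part of the integral is dominated by $\int_\Omega e^{(\lambda_N - \epsilon) u_{\epsilon,\alpha}^{N/(N-1)}} dx$.

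The crux of the argument is to show $\int_\Omega e^{(\lambda_N - \epsilon) u_{\epsilon,\alpha}^{N/(N-1)}} dx \to |\Omega|$ as $\epsilon \to 0$. By Lemma \ref{lem4.3}, $\int_\Omega F^N(\nabla u_{\epsilon,\alpha}) dx \to \alpha < 1$, so normalizing $v_\epsilon = u_{\epsilon,\alpha}/(\int F^N(\nabla u_{\epsilon,\alpha}))^{1/N}$ yields $\int F^N(\nabla v_\epsilon) dx = 1$. Applying the anisotropic Moser--Trudinger inequality (Theorem C) gives $\int e^{\lambda_N v_\epsilon^{N/(N-1)}} dx \leq C$, which translates back to $\int e^{\lambda_N \alpha^{-1/(N-1)}(1 + o(1)) u_{\epsilon,\alpha}^{N/(N-1)}} dx \leq C$. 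Because $\alpha < 1$, the effective exponent strictly exceeds $\lambda_N - \epsilon$ for $\epsilon$ small, producing a uniform $L^s$-bound on $e^{(\lambda_N - \epsilon) u_{\epsilon,\alpha}^{N/(N-1)}}$ for some $s = s(\alpha) > 1$. Combined with the pointwise convergence $u_{\epsilon,\alpha} \to 0$ almost everywhere (inherited from $u_0 \equiv 0$ in Lemma \ref{lem4.1}), Vitali's convergence theorem forces $e^{(\lambda_N - \epsilon) u_{\epsilon,\alpha}^{N/(N-1)}} \to 1$ in $L^1(\Omega)$, which is the desired claim.

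Putting the two estimates together and taking $\limsup$ in $\epsilon$, I arrive at
$$\limsup_{\epsilon \to 0} \int_\Omega e^{(\lambda_N - \epsilon) u_\epsilon^{N/(N-1)}} dx \leq |\Omega| + \alpha^{-N/(N-1)} \limsup_{\epsilon \to 0} \frac{\lambda_\epsilon}{c_\epsilon^{N/(N-1)}}.$$
Since the left-hand side is independent of $\alpha$, letting $\alpha \to 1^-$ completes the proof. The main obstacle is precisely the sublevel-set estimate: to secure a uniform $L^s$-bound with $s > 1$ one needs the strict inequality $\int F^N(\nabla u_{\epsilon,\alpha}) dx < 1$, which both dictates the initial choice $\alpha < 1$ and makes the final passage $\alpha \to 1^-$ essential in order to recover the sharp constant $c_\epsilon^{N/(N-1)}$ in the statement.
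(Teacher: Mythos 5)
Your proposal is correct and follows essentially the same route as the paper: the same splitting at level $a c_\epsilon$, the same insertion of $u_\epsilon^{N/(N-1)}/(ac_\epsilon)^{N/(N-1)}\geq 1$ on the superlevel set to produce $\lambda_\epsilon/(ac_\epsilon)^{N/(N-1)}$, the same use of Lemma \ref{lem4.3} plus the anisotropic Trudinger--Moser inequality and a.e.\ convergence to zero to show the truncated integral tends to $|\Omega|$, and the same final limits $\epsilon\to 0$, $a\to 1$. You merely spell out the normalization and uniform-integrability step that the paper states more tersely.
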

\begin{proof}[\textbf{Proof}]
 For any $0<a< 1$ , by Lemma \ref{lem4.3}, we have  $\lim\limits_{\epsilon \rightarrow 0}\int_\Omega F^N(\nabla u_{\epsilon, a})dx=a<1$. From anisotropic Trudinger-Moser inequality,  $e^{\lambda_{N}u_{\epsilon, a}^{\frac{N}{N-1}}}$ is bounded in $L^q(\Omega)$ for some $q>1$. Notice that $u_{\epsilon, a}\rightarrow 0$ a.e. in $\Omega$, which implies $\lim_{\epsilon\rightarrow 0}\int_\Omega e^{\lambda_{N}u_{\epsilon, a}^{\frac{N}{N-1}}}=|\Omega|$. Hence
\begin{equation*}
\left.
\begin{aligned}[b]
e^{(\lambda_{N}-\epsilon)u_{\epsilon}^{\frac{N}{N-1}}}
= &\int\limits_{\{u_\epsilon\leq a c_\epsilon\}}e^{(\lambda_{N}-\epsilon)u_\epsilon^{\frac{N}{N-1}}} dx
+\int\limits_{ \{u_\epsilon> a c_\epsilon\} } e^{(\lambda_{N}-\epsilon)u_\epsilon^{\frac{N}{N-1}}} dx\\
\leq& \int\limits_{\{u_\epsilon\leq a c_\epsilon\}}e^{(\lambda_{N}-\epsilon)u_{\epsilon, a}^{\frac{N}{N-1}}} dx
+\frac{1}{(ac_\epsilon)^{\frac{N}{N-1}}}\int\limits_{ \{u_\epsilon> a c_\epsilon\} }u_{\epsilon}^{\frac{N}{N-1}} e^{(\lambda_{N}-\epsilon)u_\epsilon^{\frac{N}{N-1}}} dx\\
\leq& \int_{\Omega}e^{(\lambda_{N}-\epsilon)u_{\epsilon, a}^{\frac{N}{N-1}}} dx
+\frac{\lambda_\epsilon}{(ac_\epsilon)^{\frac{N}{N-1}}}.
\end{aligned}
\right.
\end{equation*}
Letting $\epsilon \rightarrow 0$ and $a \rightarrow 1$, we conclude our proof.  
\end{proof}

As  a consequence of Lemma \ref{lem4.4}, for any $\theta<\frac{N}{N-1}$, there holds
  \be \lim_{\epsilon\rightarrow 0}\frac{\lambda_\epsilon}{c_\epsilon^\theta}=+\infty.  \lb{4.10}\ee
If otherwise,  $\frac{\lambda_\epsilon}{c_\epsilon^{N/(N-1)}} \rightarrow 0$ as $\epsilon \rightarrow 0$, by  Lemma \ref{lem4.4}, we have
    $\lim\limits_{\epsilon\rightarrow 0}\int_{\Omega}e^{(\lambda_{N}-\epsilon)
     |u_\epsilon|^{\frac{N}{N-1}}}dx\leq |\Omega|$, which is impossible.
\begin{lemma} \label{lem4.5}
For any $1<q<N$, $c_\epsilon^{\frac{1}{N-1}}u_\epsilon \rightharpoonup G$ in $W^{1, q}_0(\Omega)$, where
$G$ is a distributional solution to
 \begin{equation}
   \left\{  \begin{array}{l}
          -\mathrm{div}(F^{N-1}(\nabla G)F_{\xi}(\nabla G))=\delta_{x_0}+\gamma \|G\|_p^{N-p}G^{p-1} ~~~~~~in~~~\Omega,\\
         G=0~~~~~~on~~~\partial\Omega.
         \end{array}
   \right.   \lb{4.9}
 \end{equation}
Furthermore, $c_\epsilon^{\frac{1}{N-1}}u_\epsilon \rightarrow G$ in $C^1_{loc}(\overline{\Omega}\backslash \{x_0\})$,  and $G$ has the form
  \be G(x)=-\frac{1}{(N\kappa_{N})^{\frac{1}{N-1}}}\log F^o(x-x_0)+A_{x_0}+\xi(x),    \lb{4.11}\ee
where $A_{x_0}$ is a constant depending only on $x_0$, $\xi \in C^0(\overline{\Omega})\bigcap C^1_{loc}(\Omega \backslash \{x_0\})$ and $\xi(x)=O(F^o(x-x_0))$ as $x\rightarrow x_0$.
\end{lemma}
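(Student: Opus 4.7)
The plan is to apply Lemma~\ref{lem2.2} to the rescaled function $G_\epsilon := c_\epsilon^{1/(N-1)} u_\epsilon$. Since $F$ is positively $1$-homogeneous, a direct computation gives $Q_N(\alpha u) = \alpha^{N-1} Q_N u$ for any $\alpha>0$, so multiplying the Euler--Lagrange equation~\eqref{3.1} by $c_\epsilon$ yields
\begin{equation*}
-Q_N G_\epsilon = \gamma \|G_\epsilon\|_p^{N-p} G_\epsilon^{p-1} + f_\epsilon, \qquad f_\epsilon := \frac{c_\epsilon}{\lambda_\epsilon}\, u_\epsilon^{1/(N-1)}\, e^{(\lambda_N - \epsilon) u_\epsilon^{N/(N-1)}}.
\end{equation*}
The whole argument hinges on the uniform bound $\|f_\epsilon\|_{L^1(\Omega)} \leq C$; once it is in hand, Lemma~\ref{lem2.2} gives $\|G_\epsilon\|_{W_0^{1,q}(\Omega)} \leq C(q)$ for each $1 < q < N$, and a subsequence converges weakly to some $G \in W_0^{1,q}(\Omega)$.

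To prove the $L^1$ estimate I split $\Omega$ into the three pieces $\mathcal{W}_{Rr_\epsilon}(x_\epsilon)$, $\mathcal{W}_\delta(x_0)\setminus\mathcal{W}_{Rr_\epsilon}(x_\epsilon)$ and $\Omega\setminus\mathcal{W}_\delta(x_0)$. On the innermost region the substitution $x=x_\epsilon+r_\epsilon y$ combined with~\eqref{4.2} rewrites $\int f_\epsilon\,dx$ as $\int_{\mathcal{W}_R(0)} \psi_\epsilon^{1/(N-1)}\exp[(\lambda_N-\epsilon)c_\epsilon^{N/(N-1)}(\psi_\epsilon^{N/(N-1)}-1)]\,dy$, which by Lemma~\ref{lem4.2} converges to $\int_{\mathcal{W}_R(0)} e^{(N/(N-1))\lambda_N \varphi}\,dy\leq 1$. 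On the exterior, Lemma~\ref{lem4.1} combined with elliptic regularity drives $u_\epsilon\to 0$ uniformly, and together with $c_\epsilon/\lambda_\epsilon\to 0$ (which follows from~\eqref{4.10} taken with $\theta>1$) this contribution is $o(1)$. The intermediate annulus is handled by the blow-up asymptotic $u_\epsilon(x)^{N/(N-1)} \leq c_\epsilon^{N/(N-1)} + \frac{N}{N-1}\varphi\bigl((x-x_\epsilon)/r_\epsilon\bigr)+o(1)$: after the same rescaling the resulting integrand decays like $F^o(y)^{-\lambda_N N/(N-1)}$, which is integrable at infinity and whose tail over $\{F^o(y)\geq R\}$ tends to zero as $R\to\infty$.

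Passage to the limit in the equation for $G_\epsilon$ then relies on standard monotonicity arguments for the $N$-Finsler-Laplacian to extract a.e.\ convergence of $\nabla G_\epsilon$, Rellich compactness for the nonlinear term $\|G_\epsilon\|_p^{N-p} G_\epsilon^{p-1}$, and the distributional convergence $f_\epsilon\rightharpoonup \delta_{x_0}$ — which the blow-up computation above delivers via $\int f_\epsilon\phi\,dx\to\phi(x_0)$ for every $\phi\in C_c^\infty(\Omega)$. This identifies $G$ as a distributional solution of~\eqref{4.9}. On each compact $K\subset\overline{\Omega}\setminus\{x_0\}$, the source $f_\epsilon$ is pointwise bounded and tends uniformly to zero, so $G_\epsilon$ satisfies a quasilinear elliptic equation with bounded right-hand side on $K$, and Tolksdorf--Lieberman regularity upgrades the convergence to $C^1_{loc}(\overline{\Omega}\setminus\{x_0\})$.

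To obtain the asymptotic expansion~\eqref{4.11}, I introduce the Finsler fundamental solution $\Phi(x):=-\frac{1}{(N\kappa_N)^{1/(N-1)}}\log F^o(x-x_0)$; a direct calculation using $F(\nabla F^o)=1$ and the normalization from the Finsler isoperimetric inequality verifies $-Q_N\Phi=\delta_{x_0}$ in $\mathbb{R}^N$. On a punctured Wulff neighborhood of $x_0$ the difference $h := G - \Phi$ satisfies a quasilinear elliptic equation whose right-hand side $\gamma\|G\|_p^{N-p} G^{p-1}$ is bounded, and Serrin's removable singularity theorem together with Tolksdorf $C^{1,\alpha}$ regularity extend $h$ continuously across $x_0$; setting $A_{x_0}:=h(x_0)$ and $\xi:=h-A_{x_0}$ gives the claimed decomposition, with $\xi(x)=O(F^o(x-x_0))$ coming from the Lipschitz modulus of continuity of $h$ at $x_0$. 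The principal obstacle in the whole argument is the $L^1$ estimate on $f_\epsilon$, because the integrand concentrates exponentially at scale $r_\epsilon$ and the balance between the three subregions depends crucially on the precise form of~\eqref{4.2}.
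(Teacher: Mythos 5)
Your overall skeleton (write $-Q_N(c_\epsilon^{1/(N-1)}u_\epsilon)=f_\epsilon+\gamma\|\cdot\|_p^{N-p}(\cdot)^{p-1}$ with $f_\epsilon=\frac{c_\epsilon}{\lambda_\epsilon}u_\epsilon^{1/(N-1)}e^{(\lambda_N-\epsilon)u_\epsilon^{N/(N-1)}}$, establish an $L^1$ bound, invoke Lemma \ref{lem2.2}, pass to the limit, then expand $G$ near $x_0$) is the same as the paper's. But the step you yourself identify as the crux — the $L^1$ bound and the convergence $\int_\Omega f_\epsilon\phi\,dx\to\phi(x_0)$ — contains a genuine gap. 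You decompose $\Omega$ geometrically into $\mathcal{W}_{Rr_\epsilon}(x_\epsilon)$, the neck $\mathcal{W}_\delta(x_0)\setminus\mathcal{W}_{Rr_\epsilon}(x_\epsilon)$, and the exterior, and on the neck you invoke the pointwise bound $u_\epsilon^{N/(N-1)}\le c_\epsilon^{N/(N-1)}+\frac{N}{N-1}\varphi((x-x_\epsilon)/r_\epsilon)+o(1)$. Lemma \ref{lem4.2} gives $\varphi_\epsilon\to\varphi$ only in $C^1_{loc}(\mathbb{R}^N)$, i.e.\ on rescaled balls $\mathcal{W}_R(0)$ with $R$ \emph{fixed}; it says nothing about the region $Rr_\epsilon\le F^o(x-x_\epsilon)\le\delta$, which in the rescaled variable corresponds to $R\le F^o(y)\le\delta/r_\epsilon\to\infty$. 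Without that pointwise control the neck contribution cannot be estimated: the trivial bound $u_\epsilon\le c_\epsilon$ only gives $f_\epsilon\le r_\epsilon^{-N}$ there, whose integral over the neck blows up like $(\delta/r_\epsilon)^N$. Obtaining pointwise decay of $u_\epsilon$ in the neck is precisely the hard part of blow-up analysis and cannot be waved through.

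The paper sidesteps this by decomposing according to \emph{level sets} rather than geometry: it splits $\Omega$ into $\{u_\epsilon\le ac_\epsilon\}$, $\mathcal{W}_{Rr_\epsilon}(x_\epsilon)$, and $\{u_\epsilon>ac_\epsilon\}\setminus\mathcal{W}_{Rr_\epsilon}(x_\epsilon)$. On the low set, Lemma \ref{lem4.3} gives $\lim_\epsilon\int_\Omega F^N(\nabla u_{\epsilon,a})\,dx=a<1$, so the Trudinger--Moser inequality bounds the exponential in $L^q$ and the prefactor $c_\epsilon/\lambda_\epsilon\to0$ (by \eqref{4.10}) kills the term. On the high set one uses $u_\epsilon>ac_\epsilon$ to write $\frac{c_\epsilon}{\lambda_\epsilon}u_\epsilon^{1/(N-1)}e^{\cdots}\le\frac{1}{a}\frac{u_\epsilon^{N/(N-1)}}{\lambda_\epsilon}e^{\cdots}$, whose total integral is $\le 1/a$ by the very definition of $\lambda_\epsilon$; subtracting the core contribution (which tends to $1$) and letting $a\to1$ makes the remainder vanish. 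You should replace your neck argument with this device. Two smaller points: the decay rate you quote for the rescaled integrand should be $F^o(y)^{-N^2/(N-1)}$ (from $e^{\frac{N}{N-1}\lambda_N\varphi}=(1+\kappa_N^{1/(N-1)}F^o(y)^{N/(N-1)})^{-N}$), not $F^o(y)^{-\lambda_N N/(N-1)}$; and in the expansion of $G$, since $Q_N$ is nonlinear you cannot assert that $h=G-\Phi$ itself solves a quasilinear equation with bounded right-hand side — the removal of the singularity of $G-\Phi$ requires the comparison/asymptotic analysis for quasilinear isolated singularities that the paper delegates to Lemma 4.7 of \cite{Zhou}.
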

\begin{proof}[\textbf{Proof}]
  Firstly, we claim  for any function $\phi\in C(\bar{\Omega})$, it holds
    \be  \lim\limits_{\epsilon\rightarrow 0}\int_{\Omega}\frac{c_\epsilon}{\lambda_\epsilon}u_\epsilon^{\frac{1}{N-1}}e^{(\lambda_{N}-\epsilon)u_\epsilon^{\frac{N}{N-1}}}\phi dx=\phi(x_0). \lb{4.12}\ee
In fact, for any $0<a< 1$ and $R > 0$, we have  $\mathcal{W}_{Rr_\epsilon}(x_\epsilon)\subset \{u_\epsilon > ac_\epsilon \}$ as $\epsilon > 0$ small enough,  we denote
 \begin{equation*}
\left.
\begin{aligned}[b]
&\int_{\Omega}\frac{c_\epsilon}{\lambda_\epsilon}u_\epsilon^{\frac{1}{N-1}}e^{(\lambda_{N}-\epsilon)u_\epsilon^{\frac{N}{N-1}}}\phi dx\\
=&\int\limits_{\{u_\epsilon\leq a c_\epsilon\}}\frac{c_\epsilon}{\lambda_\epsilon}u_\epsilon^{\frac{1}{N-1}}e^{(\lambda_{N}-\epsilon)u_\epsilon^{\frac{N}{N-1}}}\phi dx
+\int\limits_{ \mathcal{W}_{Rr_\epsilon}(x_\epsilon) } \frac{c_\epsilon}{\lambda_\epsilon}u_\epsilon^{\frac{1}{N-1}}e^{(\lambda_{N}-\epsilon)u_\epsilon^{\frac{N}{N-1}}}\phi dx\\
&+\int\limits_{\{u_\epsilon>a c_\epsilon\}\backslash \mathcal{W}_{Rr_\epsilon}(x_\epsilon) } \frac{c_\epsilon}{\lambda_\epsilon}u_\epsilon^{\frac{1}{N-1}}e^{(\lambda_{N}-\epsilon)u_\epsilon^{\frac{N}{N-1}}}\phi dx\\
:=&\uppercase\expandafter{\romannumeral 1}+\uppercase\expandafter{\romannumeral 2}+\uppercase\expandafter{\romannumeral 3}.
\end{aligned}
\right.
\end{equation*}
By Lemma \ref{lem4.3} and \eqref{4.9}, we have
 \begin{equation*}
\left.
\begin{aligned}[b]
\uppercase\expandafter{\romannumeral 1}=
                      &\frac{c_\epsilon}{\lambda_\epsilon}\int_{\{u_\epsilon\leq a c_\epsilon\}}u_\epsilon^{\frac{1}{N-1}}e^{(\lambda_{N}-\epsilon)u_\epsilon^{\frac{N}{N-1}}}\phi dx\\
                      &\leq\frac{c_\epsilon}{\lambda_\epsilon}\int_{\Omega}u_{\epsilon, a}^{\frac{1}{N-1}}e^{(\lambda_{N}-\epsilon)u_{\epsilon, a}^{\frac{N}{N-1}}}\phi dx\\
                      &=o_\epsilon(1)O(a).
\end{aligned}
\right.
\end{equation*}
Making the change in variable $x = x_\epsilon + r_\epsilon y$,  since $\phi(x_\epsilon + r_\epsilon \cdot) \rightarrow \phi(x_0)$ uniformly in  $\mathcal{W}_{R}(0)$. Note $ r_\epsilon^N=\lambda_\epsilon c_\epsilon^{-N/(N-1)}e^{-(\lambda_{N}-\epsilon)c_\epsilon^{N/(N-1)}}$, together with \eqref{4.7}, we have
 \begin{equation*}
\left.
\begin{aligned}[b]
\uppercase\expandafter{\romannumeral 2}=&
                     \int\limits_{ \mathcal{W}_{Rr_\epsilon}(x_\epsilon) } \frac{c_\epsilon}{\lambda_\epsilon}u_\epsilon^{\frac{1}{N-1}}e^{(\lambda_{N}-\epsilon)u_\epsilon^{\frac{N}{N-1}}}\phi dx\\
                     =&\int\limits_{ \mathcal{W}_{Rr_\epsilon}(x_\epsilon) } c_{\epsilon}r_\epsilon^{-N} c_\epsilon^{-N/(N-1)}e^{-(\lambda_{N}-\epsilon)c_\epsilon^{N/(N-1)}}u_\epsilon^{\frac{1}{N-1}}e^{(\lambda_{N}-\epsilon)u_\epsilon^{\frac{N}{N-1}}}\phi dx\\
                      =&\int\limits_{ \mathcal{W}_{R}(0) } c_{\epsilon}r_\epsilon^{-N} c_\epsilon^{-N/(N-1)}c_\epsilon^{1/(N-1)}\psi_\epsilon(y)^{\frac{1}{N-1}}e^{(\lambda_{N}-\epsilon)c_\epsilon^{\frac{N}{N-1}}(\psi_\epsilon(y)^{\frac{N}{N-1}}-1)}\phi(x_\epsilon + r_\epsilon y) r_\epsilon^Ndy\\
                     =&\int\limits_{ \mathcal{W}_{R}(0) } \psi_\epsilon(y)^{\frac{1}{N-1}}e^{(\lambda_{N}-\epsilon)c_\epsilon^{\frac{N}{N-1}}(\psi_\epsilon(y)^{\frac{N}{N-1}}-1)}\phi(x_\epsilon + r_\epsilon y) dy\\
                      \rightarrow  &\phi (x_0)
\end{aligned}
\right.
\end{equation*}
as $\epsilon\rightarrow 0$ and $R\rightarrow \infty$.
 \begin{equation*}
\left.
\begin{aligned}[b]
\uppercase\expandafter{\romannumeral 3}=&
\int\limits_{\{u_\epsilon>a c_\epsilon\}\backslash \mathcal{W}_{Rr_\epsilon}(x_\epsilon) } \frac{c_\epsilon}{\lambda_\epsilon}u_\epsilon^{\frac{1}{N-1}}e^{(\lambda_{N}-\epsilon)u_\epsilon^{\frac{N}{N-1}}}\phi dx\\
\leq& \|\phi\|_{\infty}\bigg(\int\limits_{\{u_\epsilon>a c_\epsilon\}} \frac{c_\epsilon}{\lambda_\epsilon}u_\epsilon^{\frac{1}{N-1}}e^{(\lambda_{N}-\epsilon)u_\epsilon^{\frac{N}{N-1}}} dx- \int\limits_{\mathcal{W}_{Rr_\epsilon}(x_\epsilon) } \frac{c_\epsilon}{\lambda_\epsilon}u_\epsilon^{\frac{1}{N-1}}e^{(\lambda_{N}-\epsilon)u_\epsilon^{\frac{N}{N-1}}} dx  \bigg)\\
\leq& \|\phi\|_{\infty}\bigg(\frac{1}{a}\int\limits_{\{u_\epsilon>a c_\epsilon\}} \frac{u_\epsilon^{\frac{N}{N-1}}}{\lambda_\epsilon}e^{(\lambda_{N}-\epsilon)u_\epsilon^{\frac{N}{N-1}}} dx- \int\limits_{\mathcal{W}_{Rr_\epsilon}(x_\epsilon) } \frac{c_\epsilon}{\lambda_\epsilon}u_\epsilon^{\frac{1}{N-1}}e^{(\lambda_{N}-\epsilon)u_\epsilon^{\frac{N}{N-1}}} dx  \bigg)\\
\leq& \|\phi\|_{\infty}\bigg(\frac{1}{a}- \int\limits_{\mathcal{W}_{Rr_\epsilon}(x_\epsilon) } \frac{c_\epsilon}{\lambda_\epsilon}u_\epsilon^{\frac{1}{N-1}}e^{(\lambda_{N}-\epsilon)u_\epsilon^{\frac{N}{N-1}}} dx  \bigg).
\end{aligned}
\right.
\end{equation*}
Thus
      $$\lim_{a \rightarrow 1}\lim_{R \rightarrow \infty}\lim_{\epsilon \rightarrow 0}\int\limits_{\{u_\epsilon>a c_\epsilon\}\backslash \mathcal{W}_{Rr_\epsilon}(x_\epsilon) } \frac{c_\epsilon}{\lambda_\epsilon}u_\epsilon^{\frac{1}{N-1}}e^{(\lambda_{N}-\epsilon)u_\epsilon^{\frac{N}{N-1}}}\phi dx=0. $$
Combing the above discussion, we have proved the claim. From \eqref{3.2}, we have
\begin{equation}\label{4.13}
                          -Q_{N}(c_\epsilon^{\frac{1}{N-1}}u_\epsilon)=\frac{c_\epsilon}{\lambda_\epsilon} u_\epsilon^{\frac{1}{N-1}}e^{(\lambda_{N}-\epsilon)u_\epsilon^{\frac{N}{N-1}}}+\gamma \|c_\epsilon^{\frac{1}{N-1}}u_\epsilon\|_p^{N-p}(c_\epsilon^{\frac{1}{N-1}}u_\epsilon)^{p-1}.
\end{equation}
It follows from \eqref{4.12} that $\frac{c_\epsilon}{\lambda_\epsilon}u_\epsilon^{\frac{1}{N-1}}e^{(\lambda_{N}-\epsilon)u_\epsilon^{\frac{N}{N-1}}}$ is bounded in $L^1(\Omega)$. From Lemma \ref{lem2.2}, we know that $c_\epsilon^{\frac{1}{N-1}}u_\epsilon$ is  bounded in $W_0^{1, q}(\Omega)$ for any $1<q<N$. Hence there exists some $G\in W_0^{1, q}(\Omega)$ such that $c_\epsilon^{\frac{1}{N-1}}u_\epsilon\rightharpoonup G$ in $W_0^{1, q}(\Omega)$ for any $1<q<N$. Multiplying \eqref{4.13} with $\phi\in C_0^\infty(\Omega)$ , we get
\begin{equation*}
\left.
\begin{aligned}[b]
  -\int_\Omega \phi Q_N(c_\epsilon^{\frac{1}{N-1}}u_\epsilon)dx=& \int_\Omega \phi\frac{c_\epsilon}{\lambda_\epsilon} u_\epsilon^{\frac{1}{N-1}}e^{(\lambda_{N}-\epsilon)u_\epsilon^{\frac{N}{N-1}}}dx\\
     +&\gamma \|c_\epsilon^{\frac{1}{N-1}}u_\epsilon\|_p^{N-p}\int_\Omega \phi(c_\epsilon^{\frac{1}{N-1}}u_\epsilon)^{p-1}dx.
\end{aligned}
\right.
\end{equation*}
Let $\epsilon\rightarrow 0$, using again \eqref{4.12}, we have
$$\int_\Omega \nabla\phi F^{N-1}(\nabla G)F_{\xi}(\nabla G)dx=\phi (x_0)+\gamma \|G\|_p^{N-p}\int_\Omega \phi G^{p-1}dx. $$
Therefore, in the distributional sense,
  $$  -\mathrm{div}(F^{N-1}(\nabla G)F_{\xi}(\nabla G))=\delta_{x_0}+\gamma\|G\|_p^{N-p}G^{p-1} ~~~ \mathrm{in} ~~~ \Omega. $$
 Applying with the standard elliptic regularity theory in \cite{P.Tolksdorf},  we get $c_\epsilon^{\frac{1}{N-1}}u_\epsilon \rightarrow G$ in $C^1_{loc}(\overline{\Omega}\backslash \{x_0\})$.   Finally, replacing the right hand term in equation \eqref{4.11}, we use the similar discussion as Lemma 4.7 in \cite{Zhou}, the asymptotic representation of Green function can immediateiy derived. This complete the proof of Lemma \ref{lem4.5}. 
 \end{proof}

Next, we will exclude the case $x_0\in \partial\Omega$. Denote $d_\epsilon=dist(x_\epsilon, \partial\Omega)$ and $r_\epsilon$ be defined by \eqref{4.2}. We obtain
\begin{lemma} \label{lem4.6} If $0\leq\gamma <\gamma_1$ and $x_0\in \partial\Omega$. Then $\frac{r_\epsilon}{d_\epsilon}\rightarrow 0$ as $\epsilon \rightarrow 0$. Moreover, 
 $c_\epsilon^{\frac{1}{N-1}}u_\epsilon\rightharpoonup0$  weakly in $W^{1, q}_0(\Omega)(1<q<N)$  and
$c_\epsilon^{\frac{1}{N-1}}u_\epsilon\rightarrow 0$ strongly in $C^1(\overline{\Omega}\backslash \{x_0\}).$
\end{lemma}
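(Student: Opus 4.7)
The proof naturally splits into three stages, and I would carry them out in order: ruling out a bounded ratio $d_\epsilon/r_\epsilon$, identifying the weak $W_0^{1,q}$-limit $G$ as zero, and upgrading this to $C^1$ convergence away from $x_0$. For the first stage, assume by contradiction that $d_\epsilon/r_\epsilon \leq C < +\infty$ along a subsequence, and revisit the rescaling $\psi_\epsilon(y) = c_\epsilon^{-1} u_\epsilon(x_\epsilon + r_\epsilon y)$ on $\Omega_\epsilon = r_\epsilon^{-1}(\Omega - x_\epsilon)$ from Lemma \ref{lem4.2}. Then $0 \leq \psi_\epsilon \leq 1$, $\psi_\epsilon(0) = 1$, $\psi_\epsilon = 0$ on $\partial\Omega_\epsilon$, and $\psi_\epsilon$ solves the same elliptic equation whose right-hand side vanishes in $L^{p/(p-1)}$ as in the proof of Lemma \ref{lem4.2}. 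Smoothness of $\partial\Omega$ together with $d_\epsilon/r_\epsilon \leq C$ guarantees that, after a rotation, $\Omega_\epsilon$ converges to a half-space $H$ whose boundary contains a point at distance $\leq C$ from the origin; elliptic regularity up to $\partial\Omega_\epsilon$ then produces $\psi_\epsilon \to \psi$ in $C^1_{\mathrm{loc}}(\overline H)$ with $\psi$ solving $-\mathrm{div}(F^{N-1}(\nabla\psi) F_\xi(\nabla\psi)) = 0$ in $H$, $\psi = 0$ on $\partial H$, $0 \leq \psi \leq 1$, $\psi(0) = 1$. The anisotropic strong maximum principle then forces $\psi \equiv 0$, a contradiction.

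For the second stage I would follow the strategy of Lemma \ref{lem4.5}. From the equation
\[
-Q_N\bigl(c_\epsilon^{\frac{1}{N-1}} u_\epsilon\bigr) = \frac{c_\epsilon}{\lambda_\epsilon} u_\epsilon^{\frac{1}{N-1}} e^{(\lambda_N - \epsilon) u_\epsilon^{N/(N-1)}} + \gamma \bigl\|c_\epsilon^{\frac{1}{N-1}} u_\epsilon\bigr\|_p^{N-p}\bigl(c_\epsilon^{\frac{1}{N-1}} u_\epsilon\bigr)^{p-1}
\]
together with Lemma \ref{lem2.2}, $c_\epsilon^{\frac{1}{N-1}} u_\epsilon$ is bounded in $W_0^{1,q}(\Omega)$ and so converges weakly to some $G$. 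The analogue of claim \eqref{4.12} still holds, so testing against $\phi \in C_c^\infty(\Omega)$ gives the delta mass $\phi(x_0)$; but $x_0 \in \partial\Omega$ forces $\phi(x_0) = 0$. Passing to the limit yields
\[
-\mathrm{div}\bigl(F^{N-1}(\nabla G) F_\xi(\nabla G)\bigr) = \gamma \|G\|_p^{N-p} G^{p-1} \quad \text{in } \Omega, \qquad G = 0 \text{ on } \partial\Omega.
\]
Testing with $G$ itself produces $\int_\Omega F^N(\nabla G)\, dx = \gamma \|G\|_p^N$; if $G \not\equiv 0$, the variational definition of $\gamma_1$ forces $\gamma \geq \gamma_1$, contradicting the hypothesis. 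Hence $G \equiv 0$.

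The third stage adapts the regularity argument at the end of Lemma \ref{lem4.5}: on any compact $K \subset \overline\Omega \setminus \{x_0\}$, Lemma \ref{lem4.1} together with the same local concentration-compactness/elliptic-regularity reasoning used in the proof of Lemma \ref{lem4.1} yields $u_\epsilon \to 0$ in $C^1(K)$, so the right-hand side of the rescaled equation is uniformly bounded in $L^\infty(K)$ and Tolksdorf's boundary regularity theory gives $c_\epsilon^{\frac{1}{N-1}} u_\epsilon \to 0$ in $C^1(K)$. The main obstacle is stage one: one must carefully justify convergence of the rescaled domains $\Omega_\epsilon$ to a half-space (using $C^2$-smoothness of $\partial\Omega$), obtain \emph{uniform} regularity for $\psi_\epsilon$ up to $\partial\Omega_\epsilon$ so that $\psi$ inherits the zero Dirichlet boundary condition, and then invoke a strong maximum principle for the anisotropic $N$-Laplacian on a half-space; once these ingredients are in hand, the contradiction between $\psi(0) = 1$ and $\psi \equiv 0$ closes the argument.
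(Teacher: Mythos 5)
Your proposal is correct and follows essentially the same route as the paper: a boundary blow-up/rescaling argument contradicting the zero Dirichlet condition for stage one (the paper rescales at the nearest boundary point $y_\epsilon$ and contradicts $\tilde\psi_\epsilon(0)=0$ with $\tilde\psi_\epsilon\to 1$, which is the mirror image of your half-space maximum-principle argument), then the disappearance of the Dirac mass since $\phi(x_0)=0$ for $\phi\in C_c^\infty(\Omega)$, and finally testing the limit equation with $\tilde G$ and invoking $\gamma<\gamma_1$ to conclude $\tilde G\equiv 0$. The technical points you flag (convergence of $\Omega_\epsilon$ to a half-space and uniform boundary regularity) are indeed left implicit in the paper as well.
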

\begin{proof}[\textbf{Proof}]
Firstly, we prove that $\frac{r_\epsilon}{d_\epsilon}\rightarrow 0$. If not, there exist some constant $\delta$ such that $\frac{r_\epsilon}{d_\epsilon} \geq \delta$ and $y_\epsilon \in \partial\Omega$, $d_\epsilon=|x_\epsilon-y_\epsilon|$. Let
$$\tilde{\psi}_\epsilon=\frac{u_\epsilon(y_\epsilon+r_\epsilon x)}{c_\epsilon}.$$ 
As the similar procedure of interior case, we have $$\tilde{\psi}_\epsilon \rightarrow 1 \quad \text{in}   \quad  C^1(B_R^+) \quad  \text{for} \quad  \|\tilde{\psi}_\epsilon\|_{L^\infty(\overline{B_R^+})}=1.$$ 
This is impossible because of $\tilde{\psi}_\epsilon(0)=0$. Secondly, let $\Omega_\epsilon=\{x\in \mathbb{R}^N: x_\epsilon+r_\epsilon\epsilon \in \Omega\}$, we have know that  $\frac{r_\epsilon}{d_\epsilon}\rightarrow 0$ by the first step, then $\Omega _\epsilon \rightarrow \mathbb{R}^N$. Let $\varphi_\epsilon$ and $\varphi$,   the  same argument
as the proof of Lemma \ref{lem4.2},  we get $\varphi_\epsilon \rightarrow \varphi$ in $C_{loc}^1(\mathbb{R}^N)$. By the similar process as interior case,  we have 	$c_\epsilon^{\frac{1}{N-1}}u_\epsilon\rightharpoonup \tilde{G}$ in   weakly in $W^{1, q}_0(\Omega)(1<q<N)$  and
 in $C^1(\overline{\Omega}\backslash \{x_0\})$ with $\tilde{G}$ satisfying  $-Q_N \tilde{G}=\gamma\|\tilde{G}\|_p^{N-p}\tilde{G}^{p-1}$ in $\Omega$ and $\tilde{G}=0$ on $\partial \Omega$. By the standard elliptic regularity theory, we have $\tilde{G}\in C^1(\overline{\Omega})$. Since $\gamma <\gamma_1$,  test the eqution with function $\tilde{G}$, we get $\tilde{G}\equiv 0$. Thus we have
  $c_\epsilon^{\frac{1}{N-1}}u_\epsilon\rightharpoonup0$  weakly in $W^{1, q}_0(\Omega)(1<q<N)$  and
 $c_\epsilon^{\frac{1}{N-1}}u_\epsilon\rightarrow 0$ strongly in $C^1(\overline{\Omega}\backslash \{x_0\}).$ 
\end{proof}

\begin{lemma} \label{lem4.7}
If $0< \gamma <\gamma_1$, the blow-up point $x_0 \notin \partial\Omega$.
\end{lemma}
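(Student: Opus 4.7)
The plan is to argue by contradiction: assume $x_0\in\partial\Omega$ and derive both $\Lambda_\gamma\leq|\Omega|$ (from the boundary blow-up analysis) and $\Lambda_\gamma>|\Omega|$ (from a smooth test function), which are incompatible. By Lemma \ref{lem4.6}, in the boundary-blow-up regime $c_\epsilon^{1/(N-1)}u_\epsilon\rightharpoonup 0$ weakly in $W^{1,q}_0(\Omega)$ for every $1<q<N$ and $c_\epsilon^{1/(N-1)}u_\epsilon\to 0$ strongly in $C^1(\overline{\Omega}\setminus\{x_0\})$. Since $c_\epsilon\to\infty$, this forces $u_\epsilon\to 0$ uniformly on $\Omega\setminus\mathcal{W}_\delta(x_0)$ for every fixed $\delta>0$.

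To obtain the upper bound on $\Lambda_\gamma$, split
\[
\int_\Omega e^{(\lambda_N-\epsilon)u_\epsilon^{N/(N-1)}}\,dx = \int_{\Omega\setminus\mathcal{W}_\delta(x_0)} e^{(\lambda_N-\epsilon)u_\epsilon^{N/(N-1)}}\,dx + \int_{\Omega\cap\mathcal{W}_\delta(x_0)} e^{(\lambda_N-\epsilon)u_\epsilon^{N/(N-1)}}\,dx.
\]
The first piece tends to $|\Omega\setminus\mathcal{W}_\delta(x_0)|$ by dominated convergence. For the second, by Lemma \ref{lem4.1} all Dirichlet energy concentrates at $x_0$, so $\int_{\Omega\cap\mathcal{W}_\delta(x_0)}F^N(\nabla u_\epsilon)\,dx\to 1$. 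Working with the truncation $\eta_\delta u_\epsilon$ where $\eta_\delta\equiv 1$ on $\mathcal{W}_\delta(x_0)$ and is supported in $\mathcal{W}_{2\delta}(x_0)$, and invoking the Finsler P\'olya--Szeg\"o principle of \cite{AFTL} together with \eqref{4.10}, one obtains a sharp Carleson--Chang type bound
\[
\limsup_{\epsilon\to 0}\int_{\Omega\cap\mathcal{W}_\delta(x_0)} e^{(\lambda_N-\epsilon)u_\epsilon^{N/(N-1)}}\,dx \leq |\Omega\cap\mathcal{W}_\delta(x_0)|.
\]
The crucial point is that the additive bubble term which, in the interior case (Lemma \ref{lem4.5}), is controlled by the constant $A_{x_0}$ in the Green-function expansion \eqref{4.11}, is now suppressed because $\widetilde{G}\equiv 0$ by Lemma \ref{lem4.6}. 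Combining the two pieces, letting $\delta\to 0$, and using \eqref{3.2}, we conclude $\Lambda_\gamma\leq|\Omega|$.

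On the other hand, because $\gamma<\gamma_1$ the quantity $\|\cdot\|_{N,F,\gamma,p}$ is a norm equivalent to $(\int_\Omega F^N(\nabla\cdot)\,dx)^{1/N}$, so any $u\in C_0^\infty(\Omega)\setminus\{0\}$ can be rescaled to satisfy $\|u\|_{N,F,\gamma,p}=1$. Since $e^{\lambda_N|u|^{N/(N-1)}}>1$ on the positive-measure set $\{u\neq 0\}$, we have $\int_\Omega e^{\lambda_N|u|^{N/(N-1)}}\,dx>|\Omega|$, whence $\Lambda_\gamma>|\Omega|$. This contradicts the previous bound, completing the proof.

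The main obstacle is the sharp boundary capacity estimate used in step two: unlike the interior case, there is no Green-function singularity at $x_0$ to anchor a Carleson--Chang construction, so one must directly show that the bubble contribution vanishes once $x_0$ escapes to $\partial\Omega$. The execution parallels the $p=N$ computation in \cite{Zhou} but has to be redone in the Finsler geometry (with Wulff balls in place of Euclidean balls) and with the lower-order term $\gamma\|u_\epsilon\|_p^{N-p}u_\epsilon^{p-1}$ absorbed into the error using $u_\epsilon\to 0$ in $L^p(\Omega)$.
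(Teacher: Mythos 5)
Your second half ($\Lambda_\gamma>|\Omega|$ via any nonzero test function) is fine, but the first half contains a genuine gap at exactly the point you flag as "the main obstacle." The asserted local estimate $\limsup_{\epsilon\to0}\int_{\Omega\cap\mathcal{W}_\delta(x_0)}e^{(\lambda_N-\epsilon)u_\epsilon^{N/(N-1)}}dx\le|\Omega\cap\mathcal{W}_\delta(x_0)|$ is not a "Carleson--Chang type bound": the Carleson--Chang lemma for a normalized concentrating sequence gives $|B|\bigl(1+e^{1+\frac12+\cdots+\frac{1}{N-1}}\bigr)$, i.e.\ it always carries a nonvanishing bubble term, and the paper's own Lemma \ref{lem4.4} together with \eqref{4.10} only yields $\Lambda_\gamma\le|\Omega|+\limsup_\epsilon\lambda_\epsilon/c_\epsilon^{N/(N-1)}$ with the limsup already known to be nonzero unless one proves otherwise. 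To show the bubble term vanishes when $x_0\in\partial\Omega$ you would need a boundary analogue of the capacity estimate of Lemma \ref{lem4.8}; but the neck region $\Omega\cap(\mathcal{W}_\delta(x_\epsilon)\setminus\mathcal{W}_{Rr_\epsilon}(x_\epsilon))$ is no longer a Wulff annulus, so the explicit $Q_N$-harmonic comparison function \eqref{4.18} is unavailable and the whole computation \eqref{4.14}--\eqref{4.23} breaks down. Invoking the P\'olya--Szeg\"o principle and $\widetilde G\equiv0$ does not substitute for this; the step is asserted, not proved.

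The paper avoids this difficulty entirely by a comparison with the unperturbed problem. Setting $w_\epsilon=u_\epsilon/\bigl(\int_\Omega F^N(\nabla u_\epsilon)dx\bigr)^{1/N}$, so that $\int_\Omega F^N(\nabla w_\epsilon)dx=1$, one writes $|u_\epsilon|^{N/(N-1)}=(1+\gamma\|u_\epsilon\|_p^N)^{1/(N-1)}|w_\epsilon|^{N/(N-1)}$ and bounds the extra factor by $\exp\bigl(\lambda_N\bigl[\tfrac{\gamma}{N-1}\|c_\epsilon^{1/(N-1)}u_\epsilon\|_p^N+o(1)\bigr]\bigr)$; since Lemma \ref{lem4.6} gives $c_\epsilon^{1/(N-1)}u_\epsilon\to0$, this factor tends to $1$ and hence $\Lambda_\gamma\le\Lambda_0$. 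On the other hand, the extremal $u$ for $\Lambda_0$ (from \cite{ZZ}) can be dilated to $v$ with $\|v\|_{N,F,\gamma,p}=1$ and $|v|\ge|u|$, $|v|\not\equiv|u|$, which uses $\gamma>0$ in an essential way and gives the strict inequality $\Lambda_\gamma>\Lambda_0$ --- a contradiction. This is why the lemma is stated only for $0<\gamma<\gamma_1$; your route, even if completed, would not explain the role of the strict positivity of $\gamma$, and as written it does not close.
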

\begin{proof}[\textbf{Proof}]
 Suppose $x_0 \in \partial\Omega$. Then $\|u_\epsilon\|_N^N \rightarrow 0$ implies $$(1+\gamma\|u_\epsilon\|_p^N)^{-\frac{1}{N-1}}=1-\frac{\gamma}{N-1}\|u_\epsilon\|_p^N+O(\|u_\epsilon\|_p^{2N}).$$
Let $w_\epsilon=\frac{u_\epsilon}{\int_{\Omega}F^{N}(\nabla u_\epsilon)}$, since $\int_{\Omega}F^{N}(\nabla u_\epsilon)=1+\gamma\|u_\epsilon\|_p^N$, we have
\begin{equation*}
\left.
\begin{aligned}  
 \Lambda_{\gamma, \epsilon} =&\int_{\Omega}e^{(\lambda_{N}-\epsilon)
	|u_{\epsilon}|^{\frac{N}{N-1}}}dx=\int_{\Omega}e^{(\lambda_{N}-\epsilon)
	(1+\gamma\|u_\epsilon\|_p^N)^{\frac{1}{N-1}}|w_{\epsilon}|^{\frac{N}{N-1}}}dx\\
   =&\int_{\Omega}e^{[(\lambda_{N}-\epsilon)
   	(1+\gamma\|u_\epsilon\|_p^N)^{\frac{1}{N-1}}-1]w_\epsilon^{\frac{N}{N-1}}}e^{(\lambda_{N}-\epsilon)
   	|w_{\epsilon}|^{\frac{N}{N-1}}}dx \\
   \leq&\int_{\Omega}e^{\lambda_{N}
   	[1-(1+\gamma\|u_\epsilon\|_p^N)^{-\frac{1}{N-1}}]c_{ \epsilon}^{\frac{N}{N-1}}}\int_{\Omega}e^{\lambda_{N}
   	|w_{\epsilon}|^{\frac{N}{N-1}}}dx\\
     \leq&e^{\lambda_{N}
    [\frac{\gamma}{N-1}\|c_{ \epsilon}^{\frac{1}{N-1}}u_\epsilon\|_p^N+c_{ \epsilon}^{-\frac{N}{N-1}}O(\|c_{ \epsilon}^{\frac{1}{N-1}}u_\epsilon\|_p^N)]}\Lambda_{0}.
\end{aligned}
\right. 
\end{equation*}
 From  Lemma \ref{lem4.6}, we have $\|c_\epsilon^{\frac{1}{N-1}}u_\epsilon\|_p^N\rightarrow0$. Thus
lettint $\epsilon\rightarrow 0$ and using \eqref{3.2}, we have $ \Lambda_{\gamma} \leq \Lambda_{0}$.

On the other hand, according to the anisotropic Trudinger-Moser inequality, $\Lambda_{0}$  is attained by a function $u\in W^{1, N}_0(\Omega)$ with $\int_{\Omega}F^{N}(\nabla u)=1$. Define $v=u/({1-\gamma\int_{\Omega}F^{N}(\nabla u)})^{\frac{1}{N}}$. Thus $$\|v\|_{N,F,\gamma, p}=\bigg(\int_{\Omega}F^{N}(\nabla v)dx-\gamma\| v\|_p^{N}\bigg)^{\frac{1}{N}}=1$$
Since $u\not\equiv 0$ and $\gamma>0$, we get $|u|\leq|v|$ and $|u|\not\equiv|v|$. Thus
\begin{equation*}
  \Lambda_{\gamma}\geq \int_{\Omega}e^{\lambda_{N}
  	|v|^{\frac{N}{N-1}}}dx> \int_{\Omega}e^{\lambda_{N}
  	|u|^{\frac{N}{N-1}}}dx=\Lambda_{0},
\end{equation*}
which is a contradiction with $ \Lambda_{\gamma} \leq \Lambda_{0}$.
\end{proof}
\subsection{The upper bound estimate}
  We will use the capacity technique to give an upper bound  estimate,  which was  used by Y. Li \cite{Y. Li} and Yang-Zhu \cite{YZ}, our main result of this subsection is an upper bound estimate.
\begin{lemma} \label{lem4.8}
$\Lambda_{\gamma}\leq|\Omega|+\kappa_{N}e^{\lambda_NA_{x_0}+\sum_{k=1}^{N-1}\frac{1}{k}}.$
\end{lemma}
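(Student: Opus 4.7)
By Lemma \ref{lem4.4} it suffices to show $\limsup_{\epsilon\to 0}\lambda_\epsilon/c_\epsilon^{N/(N-1)}\le\kappa_N e^{\lambda_N A_{x_0}+\sum_{k=1}^{N-1}1/k}$. Since $r_\epsilon^N=\lambda_\epsilon c_\epsilon^{-N/(N-1)}e^{-(\lambda_N-\epsilon)c_\epsilon^{N/(N-1)}}$ by \eqref{4.2} and $e^{-\epsilon c_\epsilon^{N/(N-1)}}\le 1$, this reduces to proving the asymptotic bound
\[
-\log r_\epsilon\ge\frac{\lambda_N}{N}c_\epsilon^{N/(N-1)}-\frac{1}{N}\log\kappa_N-\frac{\lambda_N A_{x_0}}{N}-\frac{1}{N}\sum_{k=1}^{N-1}\frac{1}{k}+o_\epsilon(1).
\]
The tool I plan to use is the $F^N$-capacity of the Wulff annulus $\mathcal{W}_\delta(x_\epsilon)\setminus\mathcal{W}_{Rr_\epsilon}(x_\epsilon)$, which by the logarithmic extremal already computed in \eqref{2.12}--\eqref{2.13} equals $N\kappa_N/(\log(\delta/(Rr_\epsilon)))^{N-1}$.

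Set $L_\epsilon=\sup_{\partial\mathcal{W}_\delta(x_\epsilon)}u_\epsilon$ and $m_\epsilon=\inf_{\mathcal{W}_{Rr_\epsilon}(x_\epsilon)}u_\epsilon$. Plugging the normalized cut-off $\min\{(u_\epsilon-L_\epsilon)_+,\,m_\epsilon-L_\epsilon\}/(m_\epsilon-L_\epsilon)$ into the variational definition of the capacity gives
\[
\int_{\mathcal{W}_\delta(x_\epsilon)\setminus\mathcal{W}_{Rr_\epsilon}(x_\epsilon)}F^N(\nabla u_\epsilon)\,dx\ge\frac{N\kappa_N(m_\epsilon-L_\epsilon)^N}{(\log(\delta/(Rr_\epsilon)))^{N-1}}.
\]
I would then expand both sides to order $c_\epsilon^{-N/(N-1)}$. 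The boundary asymptotics $m_\epsilon=c_\epsilon+c_\epsilon^{-1/(N-1)}\varphi(R)+o(c_\epsilon^{-1/(N-1)})$ and $L_\epsilon=c_\epsilon^{-1/(N-1)}\bigl(-(N\kappa_N)^{-1/(N-1)}\log\delta+A_{x_0}\bigr)+o(c_\epsilon^{-1/(N-1)})$ come from Lemma \ref{lem4.2}(iii) and the Green-function asymptotic \eqref{4.11} from Lemma \ref{lem4.5}; the same $C^1$ convergences identify the integrals over $\mathcal{W}_{Rr_\epsilon}(x_\epsilon)$ and $\Omega\setminus\mathcal{W}_\delta(x_\epsilon)$ as $c_\epsilon^{-N/(N-1)}I_R+o(\cdot)$ and $c_\epsilon^{-N/(N-1)}J_\delta+o(\cdot)$, where $I_R=\int_{\mathcal{W}_R(0)}F^N(\nabla\varphi)\,dy$ and $J_\delta=\int_{\Omega\setminus\mathcal{W}_\delta(x_0)}F^N(\nabla G)\,dx$.

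Two explicit computations then close the argument. Integrating by parts against the equation \eqref{4.9} for $G$, using \eqref{4.11} and the perimeter identity $\int_{\partial\mathcal{W}_\delta}|\nabla F^o|^{-1}\,d\mathcal{H}^{N-1}=N\kappa_N\delta^{N-1}$, yields
\[
J_\delta=\gamma\|G\|_p^N-\frac{N\log\delta}{\lambda_N}+A_{x_0}+o_\delta(1),
\]
whose $\gamma\|G\|_p^N$ piece is precisely the limit of the $\gamma\|u_\epsilon\|_p^N$ summand that appears in the left-hand side of the capacity inequality through $\int_\Omega F^N(\nabla u_\epsilon)\,dx=1+\gamma\|u_\epsilon\|_p^N$, so the two cancel. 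For $I_R$, the explicit form of $\varphi$ from Lemma \ref{lem4.2}(iii), anisotropic polar coordinates, and the substitution $v=u/(1+u)$---which reduces the integrand to $(1-v)^{-1}-\sum_{j=0}^{N-2}v^j$---give
\[
I_R=\frac{\log\kappa_N}{\lambda_N}+\frac{N\log R}{\lambda_N}-\frac{N-1}{\lambda_N}\sum_{k=1}^{N-1}\frac{1}{k}+o_R(1),
\]
with the harmonic sum emerging as the boundary contribution at $v\to 1$. Substituting, raising to the $(N-1)$-th root, and isolating $-\log r_\epsilon$, the $\log\delta$ and $\log R$ terms cancel exactly across $L_\epsilon$, $m_\epsilon$, $I_R$ and $J_\delta$, leaving precisely the three constants $-\tfrac{1}{N}\log\kappa_N$, $-\tfrac{\lambda_N A_{x_0}}{N}$ and $-\tfrac{1}{N}\sum_{k=1}^{N-1}\tfrac{1}{k}$ claimed above.

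The principal obstacle is this tightly coupled bookkeeping across the three nested limits $\epsilon\to 0$, $R\to\infty$, $\delta\to 0$, taken in that order: each step of the capacity inequality involves ratios of quantities that individually diverge, so both the $\log\delta$ cancellation (the compatibility check on the choice of $L_\epsilon$) and the $\log R$ cancellation (the compatibility check between the bubble contribution $I_R$ and the Green-function contribution to $L_\epsilon$) must be verified with Taylor remainders uniform in $R$ and $\delta$. The harmonic number $\sum_{k=1}^{N-1}1/k$ is the only non-elementary constant that survives all these cancellations, and it traces uniquely to the substitution $v=u/(1+u)$ employed in the evaluation of $I_R$.
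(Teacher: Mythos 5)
Your plan is correct and follows essentially the same route as the paper: the neck--bubble--far-field energy decomposition, the anisotropic capacity of the Wulff annulus $\mathcal{W}_\delta(x_\epsilon)\setminus\mathcal{W}_{Rr_\epsilon}(x_\epsilon)$ tested against the truncation of $u_\epsilon$, the Green-function expansion \eqref{4.11} for the outer piece, the explicit bubble energy producing the harmonic number, and the cancellation of the $\log R$, $\log\delta$ and $\gamma\|G\|_p^N$ terms before invoking Lemma \ref{lem4.4}. The only cosmetic deviations (taking the infimum over the ball rather than over $\partial\mathcal{W}_{Rr_\epsilon}(x_\epsilon)$, and phrasing the comparison via the variational definition of capacity rather than the explicit logarithmic minimizer $h$) are immaterial.
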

\begin{proof}[\textbf{Proof}]
 Notice that $x_0\in \Omega$. Take $\delta > 0$ such that $\mathcal{W}_{\delta}(x_0)\subset \Omega$. For any $R>0$, we assume that $\epsilon$ is so small that $\delta > Rr_\epsilon$. We denote by $o_\epsilon(1)$ ($o_\delta(1)$; $o_R(1)$) the terms which tend to $0$ as 
$\epsilon\rightarrow 0$ ($\delta\rightarrow 0$; $R\rightarrow \infty$). 
From Lemma \ref{lem4.5}, we have
\begin{equation}\label{4.14}
\left.
\begin{aligned}
\int\limits_{\Omega \backslash \mathcal{W}_{\delta}(x_\epsilon) }F^N(\nabla u_\epsilon)dx=&\frac{1}{c_\epsilon^{\frac{N}{N-1}}}(\int_{\Omega \backslash \mathcal{W}_{\delta}(x_\epsilon) }F^N(\nabla G)dx+o_\epsilon(1))\\
=&\frac{1}{c_\epsilon^{\frac{N}{N-1}}}(\int_{\Omega \backslash \mathcal{W}_{\delta}(x_\epsilon) }-\mathrm{div} (F^{N-1}(\nabla G)F_\xi(\nabla G)) G dx\\
&+\int_{\partial(\Omega \backslash \mathcal{W}_{\delta}(x_\epsilon)) } GF^{N-1}(\nabla G)\langle F_\xi(\nabla G), \nu\rangle dx+o_\epsilon(1))\\
=&\frac{1}{c_\epsilon^{\frac{N}{N-1}}}(\gamma\int_{\Omega \backslash \mathcal{W}_{\delta}(x_\epsilon) }\|G\|_p^{N-p}G^{p}dx\\
&+\int_{\partial(\Omega \backslash \mathcal{W}_{\delta}(x_\epsilon)) }G F^{N-1}(\nabla G)\langle F_\xi(\nabla G), \nu\rangle  dx+o_\epsilon(1))\\
=&\frac{1}{c_\epsilon^{\frac{N}{N-1}}}(\gamma\|G\|_p^N-\int_{\partial \mathcal{W}_{\delta}(x_\epsilon) }G F^{N-1}(\nabla G)\langle F_\xi(\nabla G), \nu\rangle  dx+o_\delta(1)+o_\epsilon(1))\\
=&\frac{1}{c_\epsilon^{\frac{N}{N-1}}}(\gamma\|G\|_p^N-\frac{1}{(N\kappa_N)^{\frac{1}{N-1}}}\log \delta+A_{x_0}+o_\delta(1)+o_\epsilon(1))\\
=&\frac{1}{c_\epsilon^{\frac{N}{N-1}}}(\gamma\|G\|_p^N-\frac{N}{\lambda_N}\log \delta+A_{x_0}+o_\delta(1)+o_\epsilon(1)).
\end{aligned}
\right.  
\end{equation}
From  \eqref{4.4}, we have on $\mathcal{W}_{Rr_\epsilon}(x_\epsilon)$  that $u _\epsilon(x) = c_\epsilon^{-\frac{1}{N-1}} \varphi_\epsilon(\frac{x-x_\epsilon}{r_\epsilon} ) +c_\epsilon$. Thus
\begin{equation}\label{4.15}
\int\limits_{\mathcal{W}_{Rr_\epsilon}(x_\epsilon) }F^N(\nabla u_\epsilon)dx=c_\epsilon^{-\frac{N}{N-1}}\int\limits_{\mathcal{W}_{R}(0) }F^N(\nabla \varphi_\epsilon)dx
=\frac{1}{c_\epsilon^{\frac{N}{N-1}}}\bigg(\int\limits_{\mathcal{W}_{R}(x_0) }F^N(\nabla \varphi)dx+o_\epsilon(1)\bigg).
\end{equation}
By a straightforward computation, we have
\begin{equation}\label{4.16}
\left.
\begin{aligned}
\int\limits_{\mathcal{W}_{R}(x_0) }F^N(\nabla \varphi)dx
=\frac{N}{\lambda_{N}}\log R+\frac{1}{\lambda_{N}}\log \kappa_N
-\frac{N-1}{\lambda_N }\sum_{k=1}^{N-1}\frac{1}{k}+o_R(1).
\end{aligned}
\right.  
\end{equation}
Let $i_\epsilon=\inf_{\partial \mathcal{W}_{Rr_\epsilon}(x_\epsilon)}u_\epsilon$, $s_\epsilon=\sup_{\partial \mathcal{W}_{\delta}(x_\epsilon)}u_\epsilon$.  Since $ \psi_{ \epsilon}(x)\rightarrow 1$ in $C_{loc}^1( \mathbb{R}^{N})$, together with Lemma \ref{lem4.5}, we have $i_\epsilon >s_\epsilon$ for $\epsilon> 0$ small enough. Define a function space 
 $$\mathbf{S}(i_\epsilon, s_\epsilon)=\{u \in \mathcal{W}_{\delta}(x_\epsilon) \backslash \overline{\mathcal{W}_{Rr_\epsilon}(x_\epsilon)}: u|_{\partial \mathcal{W}_{Rr_\epsilon}}= i_\epsilon,  u|_{\partial \mathcal{W}_{\delta}(x_\epsilon)}= s_\epsilon\},$$
and $\inf\limits_{u\in \mathbf{S}(i_\epsilon, s_\epsilon)} \int_{ B_{\delta}(x_\epsilon) \backslash B_{Rr_\epsilon}(x_\epsilon)} F^N(\nabla u)dx$ is attained by $h(x)$ satisfying
 \begin{equation}\label{4.17}
\begin{cases}
-Q_N h=0  &  \text{in}\quad \mathcal{W}_{\delta}(x_\epsilon) \backslash \overline{\mathcal{W}_{Rr_\epsilon}(x_\epsilon)},\\
h|_{\partial \mathcal{W}_{Rr_\epsilon}} = i_\epsilon,\\
 h|_{\partial \mathcal{W}_{\delta}(x_\epsilon)} = s_\epsilon.
\end{cases}
\end{equation}
The unique solution is
   \begin{equation}\label{4.18}
   h(x)=\frac{s_\epsilon (\log F^0(x-x_\epsilon)-\log(R r_\epsilon))+i_\epsilon (-\log\delta-\log F^0(x-x_\epsilon))}{\log \delta-\log(R r_\epsilon)},
      \end{equation}
 and hence 
  \begin{equation}\label{4.19}
  \int\limits_{ \mathcal{W}_{\delta}(x_\epsilon) \backslash \mathcal{W}_{Rr_\epsilon}(x_\epsilon)} F^N(\nabla h)dx=N\kappa_N\frac{(i_\epsilon-s_\epsilon)^N}{(\log \delta-\log(R r_\epsilon))^{N-1}}.
      \end{equation}
 Define $\tilde{u}_\epsilon=\max \{s_\epsilon, \min\{u_\epsilon, i_\epsilon\}\}$. Then $\tilde{u}_\epsilon \in  \mathbf{S}(i_\epsilon, s_\epsilon)$ and $F(\tilde{u}_\epsilon)\leq F(u_\epsilon)$ for $\epsilon>0$ small enough. Therefore
   \begin{equation}\label{4.20}
   \left.
   \begin{aligned}
   \int\limits_{ \mathcal{W}_{\delta}(x_\epsilon) \backslash \mathcal{W}_{Rr_\epsilon}(x_\epsilon)} F^N(\nabla h)dx=&\int\limits_{\mathcal{W}_{\delta}(x_\epsilon) \backslash \mathcal{W}_{Rr_\epsilon}(x_\epsilon) }F^N(\nabla \tilde{u}_\epsilon)dx\\
   \leq&\int\limits_{\mathcal{W}_{\delta}(x_\epsilon) \backslash \mathcal{W}_{Rr_\epsilon}(x_\epsilon) }F^N(\nabla u_\epsilon)dx\\
 =&1+\gamma \|u_\epsilon\|_p^N-\int\limits_{\mathcal{W}_{Rr_\epsilon}(x_\epsilon) }F^N(\nabla u_\epsilon)dx-\int\limits_{\Omega \backslash \mathcal{W}_{\delta}(x_\epsilon) }F^N(\nabla u_\epsilon)dx.
   \end{aligned}
   \right.  
   \end{equation}
Since $\gamma \|u_\epsilon\|_p^N=\gamma c_\epsilon^{-\frac{N}{N-1}} ( \|G\|_p^N+o_\epsilon(1))$, combing \eqref{4.14}-\eqref{4.16} and \eqref{4.19}-\eqref{4.20}, we obtain 
\begin{equation}\label{4.21}
\left.
\begin{aligned}
&\frac{\lambda_N}{N}\frac{(i_\epsilon-s_\epsilon)^\frac{N}{N-1}}{\log \frac{\delta}{R}-\log r_\epsilon} \\
 \leq & \bigg(1+\frac{\frac{N}{\lambda_N}\log \frac{\delta}{R}-\frac{1}{\lambda_{N}}\log \kappa_N+\frac{N-1}{\lambda_N }\sum_{k=1}^{N-1}\frac{1}{k}-A_{x_0}+o_\delta(1)+o_\epsilon(1)
	+o_R(1)}{c_\epsilon^{\frac{N}{N-1}}}\bigg)^{\frac{1}{N-1}}\\
\leq & 1+\frac{\frac{N}{\lambda_N}\log \frac{\delta}{R}-\frac{1}{\lambda_{N}}\log \kappa_N+\frac{N-1}{\lambda_N }\sum_{k=1}^{N-1}\frac{1}{k}-A_{x_0}+o_\delta(1)+o_\epsilon(1)
	+o_R(1)}{(N-1)c_\epsilon^{\frac{N}{N-1}}}
\end{aligned}
\right.  
\end{equation}
here we use the inequality $(1+t)^{\frac{1}{N-1}} \leq 1+ \frac{1}{N-1}t$
for any $-1 \leq t \leq 0$ with
   $$-1\leq \frac{\frac{N}{\lambda_N}\log \frac{\delta}{R}-\frac{1}{\lambda_{N}}\log \kappa_N+\frac{N-1}{\lambda_N }\sum_{k=1}^{N-1}\frac{1}{k}-A_{x_0}+o_\delta(1)+o_\epsilon(1)
   	+o_R(1)}{c_\epsilon^{\frac{N}{N-1}}}\leq 0.$$
 On the other hand, using \eqref{4.8} and Lemma \ref{lem4.5}, we have
  \begin{equation}\label{4.22}
  \left.
  \begin{aligned}
  &(i_\epsilon-s_\epsilon)^{\frac{N}{N-1}}\\
  = & c_\epsilon^{\frac{N}{N-1}} \bigg(1+\frac{\frac{N}{\lambda_N}\log \frac{\delta}{R}-\frac{1}{\lambda_{N}}\log \kappa_N-A_{x_0}+o_\delta(1)+o_\epsilon(1)
  	+o_R(1)}{c_\epsilon^{\frac{N}{N-1}}}\bigg)^{\frac{N}{N-1}}\\
 \geq & c_\epsilon^{\frac{N}{N-1}}+\frac{N}{N-1}\bigg(\frac{N}{\lambda_N}\log \frac{\delta}{R}-\frac{1}{\lambda_{N}}\log \kappa_N-A_{x_0}+o_\delta(1)+o_\epsilon(1))
  	+o_R(1)\bigg),
  \end{aligned}
  \right.  
  \end{equation}
here we use the inequality $(1+t)^{\frac{N}{N-1}} \geq 1+ \frac{N}{N-1}t$
for any $-1 \leq t \leq 0$ with
$$-1\leq \frac{\frac{N}{\lambda_N}\log \frac{\delta}{R}-\frac{1}{\lambda_{N}}\log \kappa_N-A_{x_0}+o_\delta(1)+o_\epsilon(1)
	+o_R(1)}{c_\epsilon^{\frac{N}{N-1}}}\leq 0.$$ 
 Since $\log \frac{\delta}{R}-\log r_\epsilon=\log \frac{\delta}{R}+\frac{\lambda_N-\epsilon}{N}c_\epsilon^{\frac{N}{N-1}}-\frac{1}{N}\log\frac{\lambda_\epsilon}{c_\epsilon^{\frac{N}{N-1}}}$, combing \eqref{4.21} and \eqref{4.22}, we have
  \begin{equation}\label{4.23}
 \left.
 \begin{aligned}
 &\frac{\lambda_N}{N}\bigg[c_\epsilon^{\frac{N}{N-1}}+\frac{N}{N-1}\bigg(\frac{N}{\lambda_N}\log \frac{\delta}{R}-\frac{1}{\lambda_{N}}\log \kappa_N-A_{x_0}+o_\delta(1)+o_\epsilon(1))
 +o_R(1)\bigg)\bigg]\\
\leq &(\log \frac{\delta}{R}+\frac{\lambda_N-\epsilon}{N}c_\epsilon^{\frac{N}{N-1}}-\frac{1}{N}\log\frac{\lambda_\epsilon}{c_\epsilon^{\frac{N}{N-1}}})\\
&\times \bigg[1+\frac{\frac{N}{\lambda_N}\log \frac{\delta}{R}-\frac{1}{\lambda_{N}}\log \kappa_N+\frac{N-1}{\lambda_N }\sum_{k=1}^{N-1}\frac{1}{k}-A_{x_0}+o(1)}{(N-1)c_\epsilon^{\frac{N}{N-1}}}\bigg]\\
\leq &\frac{\lambda_N-\epsilon}{N}c_\epsilon^{\frac{N}{N-1}}    +\frac{N}{N-1}\log \frac{\delta}{R}-\frac{1+o(1)}{N}\log\frac{\lambda_\epsilon}{c_\epsilon^{\frac{N}{N-1}}}-\frac{1}{N(N-1)}\log \kappa_N\\
&+\frac{1}{N }\sum_{k=1}^{N-1}\frac{1}{k}-\frac{\lambda_N}{N(N-1)}A_{x_0}+o(1).
 \end{aligned}
 \right.  
 \end{equation} 
Thus
$$\frac{1+o(1)}{N}\log\frac{\lambda_\epsilon}{c_\epsilon^{\frac{N}{N-1}}}\leq \frac{1}{N}\log \kappa_N+\sum_{k=1}^{N-1}\frac{1}{k}+\frac{\lambda_N}{N}A_{x_0}++o_\delta(1)+o_\epsilon(1))
+o_R(1),$$  
which lead to 
   $$\limsup_{\epsilon \rightarrow 0}\frac{\lambda_\epsilon}{c_\epsilon^{\frac{N}{N-1}}}\leq \kappa_{N}e^{\lambda_NA_{x_0}+\sum_{k=1}^{N-1}\frac{1}{k}}.$$
Recall Lemma \ref{lem4.4}, we have finished  the proof.
\end{proof}

\section{Proof of main Theorems }\label{section 5}
\begin{proof}[\textbf{Proof of Theorem 1.1.}]
Let $0\leq\gamma <\gamma_1$. If $c_\epsilon$ is bounded,   the inequality \eqref{4.1} implies that the Theorem holds. If $c_\epsilon\rightarrow +\infty$   is bounded, we know that the blow-up point $x_0 \in \Omega$ by Lemma \ref{lem4.7} and the rusult  is followed from Lemma \ref{lem4.8}.
\end{proof} 
\begin{proof}[\textbf{Proof of Theorem 1.2.}]
 Let $0\leq\gamma <\gamma_1$. We prove that the blow-up phenomena do not occur. In fact, if  $c_\epsilon \rightarrow +\infty$. In Lemma  \ref{lem4.8}, we have got the upper bound of $\Lambda_{\gamma}$, that is to say
 \be  \Lambda_{\gamma}\leq|\Omega|+\kappa_{N}e^{\lambda_NA_{x_0}+\sum_{k=1}^{N-1}\frac{1}{k}},   \label{5.1}\ee
 where $x_0$ is the blow-up point. 
If we can construct a sequence $\phi_\epsilon \in W^{1, N}(\Omega)$ with $\|\phi_\epsilon\|^N_{N,F,\gamma, p}=\int_{\Omega}F^{N}(\nabla \phi_\epsilon)dx-\gamma\| \phi_\epsilon\|_p^N=1,$ but
  \be \int_{\Omega}e^{\lambda_{N} |\phi_\epsilon|^{\frac{N}{N-1}}}dx >|\Omega|+\kappa_{N}e^{\lambda_NA_{x_0}+\sum_{k=1}^{N-1}\frac{1}{k}}. \label{5.2}  \ee
This is the contradiction with \eqref{5.1}, which implies that $c_\epsilon$ must be bounded and can be attained  by the discussion at the beginning of subsection \ref{subsection 4.1}.  Thus it suffice to 
construct the sequence $\phi_\epsilon$ such that \eqref{5.2} holds when the blow-up phenomena occur. From Lemma \ref{lem4.5}, 
$$ G(x)=-\frac{1}{(N\kappa_{N})^{\frac{1}{N-1}}}\log F^o( x-x_0)+A_{x_0}+\xi(x).    $$
 Define a sequence of functions 
\begin{equation}\label{4.4}
\phi_\epsilon(x)=
\begin{cases}
	C+C^{-\frac{1}{N-1}}(-\frac{N-1}{\lambda_N}\log(1+\kappa_{N}^{\frac{1}{N-1}}(F^o( x-x_0)\epsilon^{-1})^{\frac{N}{N-1}}+B),  &  x\in \overline{\mathcal{W}_{R \epsilon}(x_0)},\\
C^{-\frac{1}{N-1}}(G-\eta \xi),  &x\in \mathcal{W}_{2R \epsilon}(x_0)\backslash \overline{\mathcal{W}_{R \epsilon}(x_0)},\\
C^{-\frac{1}{N-1}}G,  &  \Omega \backslash  \mathcal{W}_{2R \epsilon}(x_0),
\end{cases}   
\end{equation}
where $B$ and  $C$ are constants depending only on $\epsilon$, which will be determined later. The cutoff function $\eta \in C_0^{1}(\mathcal{W}_{2R \epsilon}(x_0))$, $0\leq \eta \leq1$ and $\eta=1$ in  $\mathcal{W}_{R \epsilon}(x_0)$. To ensure $\phi_\epsilon \in W^{1, N}(\Omega)$, we require for all $x \in \partial \mathcal{W}_{R \epsilon}(x_0)$, there holds 
  $$C+C^{-\frac{1}{N-1}}(-\frac{N-1}{\lambda_N}\log(1+\kappa_{N}^{\frac{1}{N-1}}R^{\frac{N}{N-1}}+B) =C^{-\frac{1}{N-1}}(-\frac{1}{(N\kappa_{N})^{\frac{1}{N-1}}}\log (R\epsilon)+A_{x_0}),$$
which implies  
$$B=-C^{\frac{N}{N-1}}+\frac{N-1}{\lambda_N}\log(1+\kappa_{N}^{\frac{1}{N-1}}R^{\frac{N}{N-1}}) -\frac{N}{\lambda_{N}}\log (R\epsilon)+A_{x_0}.$$
On one hand, since 
$$F^N(\nabla \phi_\epsilon)=C^{-\frac{N}{N-1}} F^N(\nabla \phi_\epsilon)(1+O(R\epsilon)) $$ uniformly in $\mathcal{W}_{2R}(x_0) \backslash \mathcal{W}_{R}(x_0)$ as $\epsilon \rightarrow 0$. Thus, by Lemma \ref{lem4.5}, we have
\begin{equation*}
\left.
\begin{aligned}
\int\limits_{\Omega\backslash \mathcal{W}_{R}(x_0)}F^N(\nabla \phi_\epsilon)dx=&\int\limits_{\Omega \backslash \mathcal{W}_{2R}(x_0)}F^N(\nabla \phi_\epsilon)dx+\int\limits_{ \mathcal{W}_{2R}(x_0) \backslash \mathcal{W}_{R}(x_0)}F^N(\nabla \phi_\epsilon)dx\\
=&C^{-\frac{N}{N-1}}(\int\limits_{\Omega \backslash \mathcal{W}_{2R}(x_0)}F^N(\nabla G)dx+\int\limits_{ \mathcal{W}_{2R}(x_0) \backslash \mathcal{W}_{R}(x_0)}F^N(\nabla G)(1+O(R\epsilon))dx)\\
=&C^{-\frac{N}{N-1}}(\int\limits_{\Omega \backslash \mathcal{W}_{R}(x_0)}F^N(\nabla G)dx+\int\limits_{ \mathcal{W}_{2R}(x_0) \backslash \mathcal{W}_{R}(x_0)}F^N(\nabla G)O(R\epsilon)dx)\\
=&C^{-\frac{N}{N-1}}(\int_{\Omega\backslash \mathcal{W}_{R}(x_0)}F^N(\nabla G)dx+O(-R\epsilon\log(R\epsilon)))\\
=&C^{-\frac{N}{N-1}}\bigg(\gamma\|G\|_p^N-\frac{N}{\lambda_{N}}\log(R\epsilon)+A_{x_0}+O(-R\epsilon\log(R\epsilon))\bigg).
\end{aligned}
\right.  
\end{equation*}
On the other hand, through the direct calculation, we have
\begin{equation*}
\left.
\begin{aligned}
\int\limits_{\mathcal{W}_{R}(x_0) }F^N(\nabla \phi_\epsilon)dx
=C^{-\frac{N}{N-1}}\bigg(\frac{N-1}{\lambda_{N}}\log(1+\kappa_N^\frac{1}{N-1} R^\frac{N}{N-1})-\frac{N-1}{\lambda_N }\sum_{k=1}^{N-1}\frac{1}{k}+o_R(1)\bigg).
\end{aligned}
\right.  
\end{equation*}
Thus   
\begin{equation*}
\left.
\begin{aligned}
\int_{\Omega}F^{N}(\nabla \phi_\epsilon)dx=&\int\limits_{\Omega\backslash \mathcal{W}_{R}(x_0)}+\int\limits_{\mathcal{W}_{R}(x_0) }F^N(\nabla \phi_\epsilon)dx\\
=&C^{-\frac{N}{N-1}}\bigg(\gamma\|G\|_p^N-\frac{N}{\lambda_{N}}\log\epsilon+A_{x_0}+\frac{1}{\lambda_{N}}\log \kappa_{N}\\
&-\frac{N-1}{\lambda_{N}}\sum_{k=1}^{N-1}\frac{1}{k}+O(-R\epsilon\log(R\epsilon))\bigg),
\end{aligned}
\right.  
\end{equation*}
we also have
\begin{equation*}
\left.
\begin{aligned}
\|\phi_\epsilon\|_p^N=C^{-\frac{N}{N-1}}\bigg(\|G\|_p^N+O((R\epsilon)^N(-\log(R\epsilon))^N)\bigg).
\end{aligned}
\right.  
\end{equation*}
Take $R=-\log \epsilon$, we get
\begin{equation}\label{5.4}
\left.
\begin{aligned}
 \|\phi_\epsilon\|^N_{N,F,\gamma, p}=&\int_{\Omega}F^{N}(\nabla \phi_\epsilon)dx-\gamma\| \phi_\epsilon\|_p^N\\
  =&C^{-\frac{N}{N-1}}\bigg(\frac{N}{\lambda_{N}}R+A_{x_0}+\frac{1}{\lambda_{N}}\log \kappa_{N}-\frac{N-1}{\lambda_{N}}\sum_{k=1}^{N-1}\frac{1}{k}+O(R^{-\frac{N}{N-1}})   \bigg)
\end{aligned}
\right.  
\end{equation}
Choosing 
\be C^{\frac{N}{N-1}}=-\frac{N}{\lambda_{N}}\log \epsilon+A_{x_0}+\frac{1}{\lambda_{N}}\log \kappa_{N}-\frac{N-1}{\lambda_{N}}\sum_{k=1}^{N-1}\frac{1}{k}+O(R^{-\frac{N}{N-1}}),  \label{5.5} \ee
which implies 
\be B=\frac{N-1}{\lambda_{N}}\sum_{k=1}^{N-1}\frac{1}{k}+O(R^{-\frac{N}{N-1}})+o_R(1).\label{5.6}    \ee
Then we can get  $\|\phi_\epsilon\|^N_{N,F,\gamma, p}=\int_{\Omega}F^{N}(\nabla \phi_\epsilon)dx-\gamma\| \phi_\epsilon\|_p^N=1.$ 

Now we estimate $\int_{\Omega} e^{\lambda_N|\phi_\epsilon|^{\frac{N}{N-1}}}$. Let $F^o(x-x_0)=\epsilon y$, we get
\begin{equation}\label{5.7}
\left.
\begin{aligned}[b]  
\int_{\mathcal{W}_{R\epsilon}(x_0)} e^{\lambda_N|\phi_\epsilon|^{\frac{N}{N-1}}}
\geq&\kappa_Ne^{\lambda_N A_{x_0}+\sum_{k=1}^{N}\frac{1}{k}+O(R^{-\frac{N}{N-1}})}\int\limits_{\mathcal{W}_{R}(0)}( 1+\kappa_N^{-\frac{1}{N-1}} |y|^{\frac{N}{N-1}} )^{-N}dy\\
=&\kappa_Ne^{\lambda_N A_{x_0}+\sum_{k=1}^{N}\frac{1}{k}+O(R^{-\frac{N}{N-1}})}(1+O(R^{-\frac{N}{N-1}}))\\
=&\kappa_Ne^{\lambda_N A_{x_0}+\sum_{k=1}^{N}\frac{1}{k}}+O(R^{-\frac{N}{N-1}}).
\end{aligned}
\right.   
\end{equation}
From the inequality $e^t \geq 1 + \frac{t^{N-1}}{(N-1)!}$, we have
\begin{equation}\label{5.8}
\left.
\begin{aligned}[b]  
&\int\limits_{\Omega\backslash
	\mathcal{W}_{R\epsilon}(x_0)} e^{\lambda_N|\phi_\epsilon|^{\frac{N}{N-1}}}dx\\
\geq& \int\limits_{\Omega\backslash
	\mathcal{W}_{2R\epsilon}(x_0)} \bigg(1+\frac{\lambda_N^{N-1}|\phi_\epsilon|^N}{(N-1)!}\bigg)dx\\  
=& |\Omega|-|	\mathcal{W}_{2R\epsilon}(x_0)|+C^{-\frac{N}{N-1}}\frac{\lambda_N^{N-1}}{(N-1)!}(\|G\|_N^N+O((R\epsilon)^N(-\log(R\epsilon))^N)\\
=& |\Omega|+C^{-\frac{N}{N-1}}\frac{\lambda_N^{N-1}}{(N-1)!}\|G\|_N^N+O((-\log \epsilon)^{-\frac{N}{N-1}}).
\end{aligned}
\right.   
\end{equation}
Thus  
\begin{equation}\label{4.17}
\left.
\begin{aligned}[b]  
\int_{\Omega} e^{\phi_\epsilon}dx=&\int_{\mathcal{W}_{R\epsilon}(x_0)} e^{\lambda_N|\phi_\epsilon|^{\frac{N}{N-1}}}dx+\int\limits_{\Omega\backslash
	\mathcal{W}_{R\epsilon}(x_0)} e^{\lambda_N|\phi_\epsilon|^{\frac{N}{N-1}}}dx\\
   \geq&|\Omega|+\kappa_Ne^{\lambda_N A_{x_0}+\sum_{k=1}^{N}\frac{1}{k}}+C^{-\frac{N}{N-1}}\frac{\lambda_N^{N-1}}{(N-1)!}\|G\|_N^N+O((-\log \epsilon)^{-\frac{N}{N-1}}),
\end{aligned}
\right.   
\end{equation}
By choosing a small $\epsilon > 0$, we conclude $\int_{\Omega}e^{\phi_\epsilon}dx>|\Omega|+\kappa_Ne^{\lambda_N A_{x_0}+\sum_{k=1}^{N}\frac{1}{k}}.$ Hence we finish the proof of Theorem \ref{thm1.2}.
\end{proof}

\def\refname{References }

\end{document}